\newcommand{\me}{\mbox{\rm\tiny 1/2}}
\newcommand{\D}{\protect\displaystyle}
\newcommand{\T}{\protect\textstyle}
\newcommand{\eps}{\varepsilon}
\newcommand{\vphi}{\varphi}
\newcommand{\lbd}{\lambda}
\newtheorem{theorem}{Theorem}
\newtheorem{lemma}[theorem]{Lemma}
\newtheorem{corollary}[theorem]{Corollary}
\newtheorem{prop}[theorem]{Proposition}
\newtheorem{remark}[theorem]{Remark}
\def\N{{\mathord{\rm I\mkern-3.6mu N}}}
\def\R{{\mathord{\rm I\mkern-3.6mu R}}}
\begin{document}
\setcounter{footnote}{1}
% --------------------------------------------------------------------------- %
%                                    Title                                    %
%-----------------------------------------------------------------------------%
\title{Mean value iterations for nonlinear elliptic Cauchy problems}

\author{P.\;K\"ugler%
\footnote{Supported by FWF project F--1308 within
Spezialforschungsbereich 13, email: kuegler@indmath.uni-linz.ac.at}
\, and\, A.\;Leit\~ao$^\dag$%
\footnote{On leave from Department of Mathematics, Federal University of 
St.\,Catarina, P.O. Box 476, 88010-970 Florian\'opolis, Brazil} \\
Institut f\"ur Industriemathematik, \\ Johannes Kepler Universit\"at,
A--4040 Linz, Austria}

\date{}
\maketitle

% PREPRINT BANNER
%\vskip-8.5cm\hskip-3.5cm\epsfxsize5cm \epsfbox{prepr.ps}\vskip3.8cm

\begin{abstract}
We investigate the Cauchy problem for a class of nonlinear elliptic operators 
with $C^\infty$--coefficients at a regular set $\Omega \subset \R^n$. The 
Cauchy data are given at a manifold $\Gamma \subset \partial\Omega$ and 
our goal is to reconstruct the trace of the $H^1(\Omega)$ solution of a 
nonlinear elliptic equation at $\partial \Omega / \Gamma$.
We propose two iterative methods based on the segmenting Mann iteration 
applied to fixed point equations, which are closely related to the original 
problem. The first approach consists in obtaining a corresponding linear 
Cauchy problem and analyzing a linear fixed point equation; a 
convergence proof is given and convergence rates are obtained. On the 
second approach a nonlinear fixed point equation is considered and a 
fully nonlinear iterative method is investigated; some preliminary 
convergence results are proven and a numerical analysis is provided.
\end{abstract}

\setcounter{footnote}{0}
\pagestyle{plain}
%
%
%
%-----------------------------------------------------------------------------
\section{Introduction} \label{sec:introd}

The main results discussed in this paper are: the solution of a nonlinear 
elliptic Cauchy problem is written as the solution of fixed point equations; 
mean value iterations are used to approximate the solution of these fixed 
point equations. We follow two different approaches: in the first one we 
use a nonlinear transformation in order to obtain a linear Cauchy problem 
and a corresponding linear fixed point equation. We give a convergence 
proof and also prove some convergence rates. In the second approach we 
deduce a nonlinear fixed point equation for the solution of the Cauchy 
problem and a fully nonlinear iterative method is considered. We prove 
preliminary convergence results and analyze several numerical examples.

The fixed point equations corresponding to the nonlinear Cauchy problem 
are obtained in Section~\ref{sec:nlecp-fpgl}. In order to construct the 
fixed point operators, two main steps are required: in Section~%
\ref{sec:el-cau-prbl} we obtain a particular version of the 
Cauchy--Kowalewskaia theorem for the nonlinear elliptic Cauchy problem 
of interest; in Section~\ref{sec:el-gem-prbl} we prove existence and 
uniqueness of solutions for mixed boundary value problems associated 
with the same nonlinear differential operator. It is worth mentioning 
that, in the linear case, a fixed point equation for the Cauchy problem 
is analyzed in \cite{EnLe}.

The formulation of the Mann iteration is discussed in Section~\ref{sec:mann} 
(see \cite{Ma}). We also analyze some extensions of the original result 
obtained by W.\;Mann, among these, a variant introduced by C.\;Groetsch, 
called segmenting Mann iteration (see \cite{Gr1}, \cite{EnSc}). All these 
iterative methods aim to approximate the solutions of fixed point equations.

In Section~\ref{sec:it_cp} we formulate the mean value iterations for 
the fixed point equations obtained in Section~\ref{sec:nlecp-fpgl}. Some 
analytical results (convergence, rates, \dots) are discussed. In Section%
~\ref{sec:numeric} we analyze numerically the fully nonlinear iteration.
%
%
%
%-----------------------------------------------------------------------------
\section{Elliptic Cauchy problems} \label{sec:el-cau-prbl}

Let $\Omega \subset \R^n$ be an open bounded set and $\Gamma \subset \partial 
\Omega$ a given manifold. Given the function $q: \R \to \R+$, we denote by 
$P$ a second order elliptic operator of the form
\begin{equation} \label{gl:oper-P-def}
P(u) \ = \ - \nabla \cdot ( q(u) \nabla u )
\end{equation}
defined in $\Omega$. We denote by {\em nonlinear elliptic Cauchy problem} 
the following (time independent) initial value problem for the operator 
$P$ \\[2ex]
$ (CP) \hskip2cm
  \left\{ \begin{array}{rl}
      P u        = h\, ,&\!\! \mbox{in } \Omega \\
      u          = f\, ,&\!\! \mbox{at } \Gamma \\
      q(u) u_\nu = g\, ,&\!\! \mbox{at } \Gamma
  \end{array} \right. $ \\[2ex]
where the given functions $f, g: \Gamma \to \R$ are called {\em Cauchy 
data} and the right hand side of the differential equation is a function 
$h: \Omega \to \R$. The problem we want to solve is to evaluate the trace 
of the solution of such an initial value problem at the part of the 
boundary where no data is prescribed, i.e. at $\partial\Omega \backslash 
\Gamma$. As a solution of the Cauchy problem (CP) we consider a 
$H^1(\Omega)$--distribution, which solves the weak formulation of the 
elliptic equation in $\Omega$ and also satisfies the Cauchy data at 
$\Gamma$ in the sense of the trace operator.

It is well known that linear elliptic Cauchy problems are not well posed 
in the sense of Hadamard.%
\footnote{For a formal definition of {\em well posed} problems, see e.g. 
\cite{EHN}.}
A famous example given by Hadamard himself (see \cite{Le} and the 
references therein) shows that we cannot have continuous dependence 
on the data. Also existence of solutions for arbitrary Cauchy data 
$(f,g)$ cannot be assured,%
\footnote{The Cauchy data $(f,g)$ is called {\em consistent} if the 
corresponding problem (CP) has a solution. Otherwise $(f,g)$ is called 
{\em inconsistent} Cauchy data.}
as shows a simple argumentation with the Schwartz reflection principle 
(see \cite{Tr}).

In Section~\ref{ssec:cau-kow-lin} we extend the Holmgren theorem to the 
$H^1$--context, proving uniqueness of solutions in weak sense for linear 
elliptic Cauchy problems. The next step, described in Section~%
\ref{ssec:cau-kow-nlin}, is to extend this weak uniqueness result to 
the nonlinear elliptic Cauchy problem (CP).

%---------------------------------------
\subsection{Uniqueness: the linear case} \label{ssec:cau-kow-lin}

In this section we briefly recall an uniqueness result for elliptic 
operators. This result extends, in the elliptic case, the well known 
Holmgren theorem (see, e.g. \cite{Fo}) for operators with analytic 
coefficients and Cauchy data given at a non characteristic manifold.

A well known theorem by A.L.\;Cauchy and S.\;Kowalewskaia yields uniqueness 
of a locally analytic solution to a Cauchy problem, where the differential 
operator has analytic coefficients and the data are analytic on a analytic 
non-characteristic manifold. The Holmgren theorem guarantees, that for 
linear differential equations no other non-analytic solution exists, even 
if one renounces the analyticity of the Cauchy data.

In order to treat weak solutions of Cauchy problems, we take 
advantage of some regularity results. Essentially one needs to know 
that, given a strongly elliptic linear operator of second order $L$ with 
$C^\infty(\Omega)$--coefficients and a distribution $h \in 
H^k_{\rm loc}(\Omega)$, then a solution of $Lu = h$ has to satisfy 
$u \in C^j(\Omega)$ for $j < k+2 - (n/2)$. In particular, if $h \in 
C^\infty(\Omega)$, then $u$ also belongs to $C^\infty(\Omega)$.

Next we state the desired uniqueness result for weak solutions of the 
linear Cauchy problem. For the convenience of the reader a sketch of 
the proof is given.

\begin{theorem} \label{satz:H1-eindeut-lin}
Let $\Omega$ be an open bounded simply connected subdomain of $\R^n$ 
with analytic boundary, and let $\Gamma$ be an open, connected part of 
$\partial\Omega$. Let the operator $L$ be defined as above. Then, the 
Cauchy problem
$$ \left\{  \begin{array}{rl}
     Lu    = h\, ,&\!\! \mbox{in}\ \Omega \\
     u     = f\, ,&\!\! \mbox{at}\ \Gamma \\
     u_\nu = g\, ,&\!\! \mbox{at}\ \Gamma
   \end{array} \right.  $$
for $h \in L^2(\Omega)$, $f \in H^{\me}(\Gamma)$ and $g \in 
H^{\me}_{00}(\Gamma)'$ has at most one solution in $H^1(\Omega)$.%
\footnote{The Sobolev spaces are defined as in \cite{DaLi}; see also 
\cite{Ad}.}
\end{theorem}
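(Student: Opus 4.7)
The plan is to reduce the claim, via linearity, to showing that any weak $H^1(\Omega)$ solution $w$ of $Lw=0$ with vanishing Cauchy data on $\Gamma$ must be identically zero, and then to prove this vanishing by an extension-by-zero across $\Gamma$ combined with a Holmgren-type unique continuation argument.

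First I would set $w := u_1 - u_2$ for two candidate solutions and reduce the problem to the homogeneous case $h=0$, $f=0$, $g=0$. Then I would pick a relatively open piece $\Gamma_0$ compactly contained in $\Gamma$ together with an exterior tubular neighbourhood $V \subset \R^n \setminus \overline{\Omega}$ of $\Gamma_0$ (available because $\partial\Omega$ and hence $\Gamma$ is analytic), and extend $w$ by zero on $V$ to obtain $\tilde w$ on $\tilde\Omega := \Omega \cup \Gamma_0 \cup V$. The vanishing Dirichlet trace $w|_{\Gamma_0}=0$ guarantees $\tilde w \in H^1(\tilde\Omega)$.

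Next I would verify that $L\tilde w=0$ weakly on $\tilde\Omega$. For any $\phi \in C^\infty_c(\tilde\Omega)$, Green's identity for $L$ splits the integral over $\tilde\Omega$ into an interior part on $\Omega$ and a boundary contribution on $\Gamma_0$:
$$
\int_{\tilde\Omega} \tilde w\, L^*\phi\, dx
= \int_\Omega w\, L^*\phi\, dx
= \int_\Omega (Lw)\,\phi\, dx + \bigl\langle w,\, \phi \bigr\rangle_{\Gamma_0} - \bigl\langle w_\nu,\, \phi \bigr\rangle_{\Gamma_0} = 0,
$$
since $Lw=0$ and both $w|_{\Gamma_0}=0$ and $w_\nu|_{\Gamma_0}=0$, with the second pairing read in the duality between $H^{\me}_{00}(\Gamma_0)$ and its dual. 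The regularity statement recalled just before the theorem then upgrades $\tilde w$ to $C^\infty(\tilde\Omega)$ (in fact real-analytic, as needed for Holmgren, since $L$ has analytic coefficients along the analytic manifold $\Gamma$). Because $\tilde w \equiv 0$ on the open set $V$, the Holmgren uniqueness theorem forces $\tilde w \equiv 0$ in a neighbourhood of $\Gamma_0$, and standard unique continuation for second order elliptic operators propagates this vanishing through the connected set $\Omega$; simple connectedness is used so that this propagation can be chained to cover all of $\Omega$, yielding $w \equiv 0$ as required.

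The main technical obstacle is the rigorous justification of the extension step: the Neumann trace lies only in $H^{\me}_{00}(\Gamma)'$, so the boundary integral emerging from Green's identity has to be interpreted as a duality pairing, which forces $\Gamma_0$ to be compactly contained in $\Gamma$ and $\phi$ to have its trace in $H^{\me}_{00}(\Gamma_0)$. A secondary subtlety is that Holmgren is by nature a local statement across a non-characteristic analytic hypersurface, so turning the local vanishing near $\Gamma_0$ into the global vanishing on $\Omega$ requires the connectedness assumptions and a covering argument along analytic curves joining points of $\Omega$ to $\Gamma_0$.
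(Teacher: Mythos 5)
Your argument is correct in substance, but it follows a genuinely different route from the paper. The paper stays entirely inside $\Omega$: it reduces to homogeneous Cauchy data, uses the interior regularity result to upgrade $u=u_1-u_2$ to a classical ($C^\infty$) solution, and then runs the classical Holmgren sweeping argument with a family of analytic non-characteristic manifolds $\Gamma_\lambda$, $0\le\lambda\le 1$, joining $\Gamma_0=\Gamma$ to $\Gamma_1=\partial\Omega\setminus\Gamma$ with common endpoints, concluding $u\equiv 0$ directly from the vanishing Cauchy data on $\Gamma$. You instead extend $w$ by zero across a compactly contained piece $\Gamma_0\subset\Gamma$ into an exterior collar $V$, check (via the duality pairing for the conormal derivative) that the extension is still a weak solution on $\Omega\cup\Gamma_0\cup V$, and then invoke regularity plus unique continuation from the open set $V$ where $\tilde w$ vanishes. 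What your route buys is that you never have to organize the deformation of non-characteristic surfaces: once $\tilde w$ is smooth (indeed analytic, if the coefficients are) and vanishes on an open set, unique continuation through the connected set $\Omega\cup\Gamma_0\cup V$ finishes the proof. What it costs is an extra step the paper does not need: the coefficients of $L$ must be extended across $\Gamma$ to $V$ keeping ellipticity and the regularity (analyticity, if you want to quote Holmgren rather than Aronszajn-type unique continuation) — the paper only assumes $C^\infty(\Omega)$ coefficients and analytic $\partial\Omega$, so this extension deserves an explicit justification, e.g.\ by assuming the coefficients analytic up to $\overline\Omega$ exactly as the paper implicitly does when it invokes Holmgren. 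Two minor points: the boundary terms in your Green identity should pair $w$ with the conormal derivative of $\phi$ and $\phi$ with the conormal derivative of $w$ (both vanish on $\Gamma_0$, so the conclusion stands), and connectedness of $\Omega$, not simple connectedness, is what the propagation step actually uses.
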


\begin{sketch}
If $u_1$ and $u_2$ are $H^1$ --solutions of the Cauchy problem, then 
$u = u_1 - u_2$ solves the following problem:
$$ \left\{  \begin{array}{ll}
     Lu = 0       \, ,&\!\! \mbox{in}\ \Omega \\
     u = u_\nu = 0\, ,&\!\! \mbox{at}\ \Gamma
   \end{array} \right.  $$
Because of $Lu = 0$, our regularity result (with $h = 0$) yields that 
$u \in C^\infty(\Omega)$ holds. Hence, $u$ is a classical solution to 
the Cauchy problem with homogeneous data on the manifold $\Gamma$.

The operator $L$ is strongly elliptic and both $\Gamma$ and $\partial 
\Omega \backslash \Gamma$ are non--characteristic manifolds. Hence, it 
is possible to find a family of analytic manifolds $\Gamma_\lambda$, 
$0 \leq \lambda \leq 1$ such that $\Gamma_0 = \Gamma$, $\Gamma_1 = 
\partial\Omega \backslash \Gamma$ and all $\Gamma_\lambda$ share the 
same endpoints. If we use the family $\Gamma_\lambda$ from the proof 
of Holmgren theorem, we can conclude that the function $u$ vanishes 
identically on $\Omega$.
\end{sketch}

%---------------------------------------
\subsection{Uniqueness: the nonlinear case} \label{ssec:cau-kow-nlin}

In this Section we prove for the nonlinear elliptic Cauchy problem (CP) a 
result analogous to the one stated in Theorem~\ref{satz:H1-eindeut-lin}. 
The argumentation in the next theorem follows the lines of 
\cite[Theorem~2.3.2]{Ku}

\begin{theorem} \label{satz:H1-eindeut-nlin}
Let $\Omega$ be an open bounded simply connected subset of $\R^n$ with 
analytical boundary, $\Gamma \subset \partial\Omega$ an open connected 
manifold, the operator $P$ defined as in \eqref{gl:oper-P-def}, where 
$q: \R \to [q_{min}, q_{max}] \subset (0,\infty)$ is a $C^\infty$--function. 
Then the nonlinear Cauchy problem (CP) has for each pair of data $(f,g) \in 
H^{\me}(\Gamma) \times H^{\me}_{00}(\Gamma)'$ and $h \in L^2(\Omega)$ at 
most one $H^1(\Omega)$ solution.
\end{theorem}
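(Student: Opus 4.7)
The natural approach is a Kirchhoff-type transformation that reduces the nonlinear problem to the linear one handled by Theorem~\ref{satz:H1-eindeut-lin}. Define
$$ \Phi(s) \ = \ \int_0^s q(t)\, dt, \qquad s \in \R. $$
Since $q \in C^\infty$ with $q_{min} \leq q \leq q_{max}$, the map $\Phi:\R\to\R$ is a $C^\infty$ diffeomorphism, Lipschitz in both directions, with $\Phi'(s)=q(s)\in[q_{min},q_{max}]$.

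The plan is to show that if $u\in H^1(\Omega)$ solves (CP), then $v:=\Phi(u)$ lies in $H^1(\Omega)$ and solves the \emph{linear} Cauchy problem
$$ -\Delta v = h \ \mbox{in}\ \Omega, \qquad v = \Phi(f),\ v_\nu = g \ \mbox{at}\ \Gamma, $$
to which Theorem~\ref{satz:H1-eindeut-lin} applies with $L=-\Delta$. First I would verify, via the Sobolev chain rule for Lipschitz superpositions, that $v\in H^1(\Omega)$ and $\nabla v = q(u)\,\nabla u$ almost everywhere. This identity turns the weak formulation of $P(u)=h$, namely
$$ \int_\Omega q(u)\,\nabla u\cdot\nabla\vphi\,dx \ = \ \int_\Omega h\,\vphi\,dx \quad\mbox{for all}\ \vphi\in H^1_0(\Omega), $$
into $\int_\Omega \nabla v\cdot\nabla\vphi\,dx=\int_\Omega h\,\vphi\,dx$, i.e. $-\Delta v=h$ in the distributional sense, and in particular in $L^2(\Omega)$.

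Next I would transfer the Cauchy data. For the Dirichlet trace I would use that Lipschitz superposition commutes with the trace operator, so $\mathrm{tr}\,v = \Phi(\mathrm{tr}\,u)=\Phi(f)\in H^{\me}(\Gamma)$ (Lipschitz regularity preserves $H^{1/2}$). For the conormal condition, the identity $\nabla v=q(u)\nabla u$ shows that the distributional normal derivative $v_\nu\in H^{\me}_{00}(\Gamma)'$, defined via Green's formula against test functions supported near $\Gamma$, coincides with the prescribed conormal derivative $q(u)u_\nu=g$.

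With these identifications, two $H^1$ solutions $u_1,u_2$ of (CP) produce $v_1,v_2$ solving the same linear Cauchy problem with data $(\Phi(f),g)$ and right hand side $h$; Theorem~\ref{satz:H1-eindeut-lin} then yields $v_1=v_2$, and inverting $\Phi$ gives $u_1=u_2$. The main obstacle is the careful handling of the conormal derivative in the transformation: one has to make sure that the weak formulation of (CP), which only gives meaning to $q(u)u_\nu$ as an element of $H^{\me}_{00}(\Gamma)'$ through Green's identity, matches precisely the weak normal derivative of $v=\Phi(u)$ for the Laplacian. Once this compatibility is established, the rest reduces to routine Sobolev chain rule arguments.
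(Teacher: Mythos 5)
Your proof is correct, but it follows a genuinely different route from the paper's own argument --- in fact it is precisely the alternative that the authors acknowledge in the Remark immediately following the theorem. The paper's proof stays in the nonlinear setting: it writes $Pu = F[u]$ with $F[u] = q'(u)(\nabla u)^2 + q(u)\Delta u$, applies the mean-value theorem to the difference of two solutions, and obtains a linear second-order elliptic equation for $v = u_1 - u_2$ whose coefficients involve $q$, $q'$, $q''$ evaluated along $\theta u_1 + (1-\theta)u_2$; a second mean-value argument applied to $q(u_1)(u_1)_\nu - q(u_2)(u_2)_\nu = 0$, together with $v|_\Gamma = 0$ and the strict positivity of $q$, yields $v_\nu|_\Gamma = 0$, and Theorem~\ref{satz:H1-eindeut-lin} is then invoked for that linearized equation. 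Your Kirchhoff-type substitution $v=\Phi(u)$ (this is exactly the transformation $Q$ the paper introduces in Section~\ref{sec:nlecp-fpgl}) instead reduces both solutions to the constant-coefficient problem $-\Delta v = h$ with data $(\Phi(f),g)$, so Theorem~\ref{satz:H1-eindeut-lin} is applied with $L=-\Delta$, whose $C^\infty$-coefficient hypothesis is trivially met; by contrast, the coefficients produced by the paper's linearization are only as regular as $u_1,u_2$ allow, so your route arguably sidesteps a delicate point in exchange for the Sobolev chain rule $\nabla\Phi(u)=q(u)\nabla u$, the compatibility of the trace with the Lipschitz superposition, and the identification of the weak normal derivative of $v$ with $q(u)u_\nu$ through Green's identity --- all of which you correctly flag and which are standard. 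The authors state that they preferred the linearization proof because of the nonlinear nature of the operators associated with (CP), i.e.\ as a matter of presentation rather than necessity, so your argument is a legitimate, and in some respects cleaner, alternative.
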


\begin{proof}
Notice that $P u = F[u]$, where
$$ F[u] \ = \ F\Big(u, \frac{\partial u}{\partial x_i},
            \frac{\partial^2 u}{\partial x_i \partial x_j}\Big)
       \ := \ q'(u) (\nabla u)^2 \, + \, q(u) \Delta u . $$
The function $F = F(s,p,R)$, with $s \in \R$, $p = (p_1, \ldots, p_n)$, $R = 
(r_{ij})_{i,j=1}^n$ is continuously differentiable because of the assumption 
on $q$. Furhtermore, the operator $P$ is elliptic due to the strict positivity 
of $q$. Now let $u_1$, $u_2$ be two $H^1$--solutions of (CP). Notice that
\begin{equation} \label{gl:h1-eind-1}
F\Big(u_1, \frac{\partial u_1}{\partial x_i},
  \frac{\partial^2 u_1}{\partial x_i \partial x_j}\Big) \, - \,
F\Big(u_2, \frac{\partial u_2}{\partial x_i},
  \frac{\partial^2 u_2}{\partial x_i \partial x_j}\Big) \ = \ 0 .
\end{equation}
Defining $v := u_1 - u_2$, it follows from \eqref{gl:h1-eind-1} together 
with the mean-value theorem (for functions of several variables) that
\begin{equation} \label{gl:h1-eind-2}
\sum\limits_{i,j=1}^n \Big( \frac{\partial F}{\partial r_{ij}}
    \Big)_{\!\theta} \ \frac{\partial^2 v}{\partial x_i \partial x_j} \ + \ 
\sum\limits_{i=1}^n \Big( \frac{\partial F}{\partial p_i}
    \Big)_{\!\theta} \ \frac{\partial v}{\partial x_i} \ + \ 
\Big( \frac{\partial F}{\partial s} \Big)_{\!\theta} \ v \ = \ 0 ,
\end{equation}
for (different) $\theta \in (0,1)$ with
\begin{eqnarray*}
\Big( \frac{\partial F}{\partial r_{ij}} \Big)_{\!\theta} & = & 
q (\theta u_1 + (1-\theta) u_2) \delta_{ij} , \\
\Big( \frac{\partial F}{\partial p_i} \Big)_{\!\theta} & = &
2q' (\theta u_1 + (1-\theta) u_2) \, (\theta \frac{\partial u_1}{\partial x_i}
 + (1-\theta) \frac{\partial u_2}{\partial x_i} ) , \\
\Big( \frac{\partial F}{\partial s} \Big)_{\!\theta} & = &
q'' (\theta u_1 + (1-\theta) u_2)\, (\theta \nabla u_1 + (1-\theta)
\nabla u_2)^2 \\
& & + \ q' (\theta u_1 + (1-\theta) u_2) \,
      (\theta \Delta u_1 + (1-\theta) \Delta u_2)
\end{eqnarray*}
(note that $\Delta u_i \in L^2$, since $F[u_i] = h$). Thus, $v$ is a 
$H^1(\Omega)$--solution of the linear elliptic differential equation 
\eqref{gl:h1-eind-2} and further $v$ satisfies $v|_\Gamma = 0$. Now 
it follows from the identity $q(u_1) u_1 - q(u_2) u_2 = 0$ (again 
argumenting with the mean-value theorem)
\begin{displaymath}
q' (\theta u_1 + (1-\theta) u_2)(\theta (u_1)_\nu
 + (1-\theta) (u_2)_\nu )v + 
q (\theta u_1 + (1-\theta) u_2) v_\nu = 0 \;\;
\mbox{on} \;\; \Gamma
\end{displaymath}
for another $\theta \in (0,1)$. Finally, the positivity of $q$ yields 
$v_\nu |_\Gamma = 0$. From Theorem~\ref{satz:H1-eindeut-lin} follows 
now $v \equiv 0$, proving the assertion.
\end{proof}

\begin{remark}
There is an alternative way to obtain the uniqueness result described 
in Theorem~\ref{satz:H1-eindeut-nlin}. Namely, using the nonlinear 
transformation $Q$ in Section~\ref{sec:nlecp-fpgl} and applying 
Theorem~\ref{satz:H1-eindeut-lin} to the corresponding linear Cauchy 
problem. The choice of the approach presented in the proof of 
Theorem~\ref{satz:H1-eindeut-nlin} is motivated by the nonlinear 
nature of the operators associated to problem (CP).
\end{remark}
%
%
%
%-----------------------------------------------------------------------------
\section{Elliptic mixed boundary value problems} \label{sec:el-gem-prbl}

In this section we prove existence and uniqueness of solutions for mixed 
boundary value problems modelled by elliptic differential operators. We start 
presenting some results concerning the linear case in Section~%
\ref{ssec:el-gem-prbl-l}. In Section~\ref{ssec:el-gem-prbl-nl} we extend 
this results for the nonlinear operator defined in \eqref{gl:oper-P-def}.

%---------------------------------------
\subsection{The linear case} \label{ssec:el-gem-prbl-l}

Initially we introduce some useful Sobolev spaces. Let $\Omega \in \R^n$ 
be an open bounded regular set%
\footnote{We mean $\Omega$ is locally at one side of $\partial\Omega$.} 
with $C^\infty$--boundary $\partial\Omega$, which is split in 
$\partial\Omega = \overline{\Gamma_1} \cup \overline{\Gamma_2}$, the subsets 
$\Gamma_j$ being open connected and satisfying $\Gamma_1 \cap \Gamma_2 = 
\emptyset$. We define
$$ H^s_0(\Omega\cup\Gamma) \ := \ \overline{C^\infty_0(\Omega\cup\Gamma)}^
   {\|\cdot\|_{s;\Omega}},  $$
where $\|\cdot\|_{s;\Omega}$ denotes the usual Sobolev s--norm on $\Omega$.%
\footnote{Analogously one can define
$$   H^s(\Omega) := \overline{C^\infty(\overline{\Omega})}^
                                          {\|\cdot\|_{s;\Omega}} ,\
    H^s_0(\Omega) := \overline{C^\infty_0(\Omega)}^{\|\cdot\|_{s;\Omega}} . $$
}

Let $L$ be the linear elliptic operator defined in section~%
\ref{ssec:cau-kow-lin} and denote by $a(\cdot,\cdot)$ the 
corresponding bilinear form. For the analysis of mixed problems 
we need a {\em Poincar\'e type} inequality on the space 
$H^1_0(\Omega\cup\Gamma_j)$, i.e.
\begin{equation} \label{eq:poincarre}
  \|u\|_{L^2(\Omega)} \ \leq \ c \, \| a(u,u) \| ,\ \ 
  u \in H^1_0(\Omega\cup\Gamma_j)
\end{equation}
(see, e.g. \cite{Tr}). We consider the following mixed problem problem 
at $\Omega$: given the functions $f \in H^{\me}(\Gamma_1)$ and $g \in 
H^{\me}_{00}(\Gamma_2)'$, find a $H^1$--solution of \\[2ex]
$ (LMP) \hskip2cm
  \left\{ \begin{array}{rl}
     L u   = 0\, ,&\!\! \mbox{in}\ \Omega \\
     u     = f\, ,&\!\! \mbox{at}\ \Gamma_1 \\
     u_\nu = g\, ,&\!\! \mbox{at}\ \Gamma_2
  \end{array} \right. . \hfill $ \\[2ex]
Existence, uniqueness and continuous dependency of the data for (LMP) 
are given by a lemma of Lax--Milgram type, which states that for every 
pair of Cauchy data, problem (LMP) has a unique solution $u \in 
H^1(\Omega)$ and, further,
\begin{equation} \label{gl:stetig-abhang}
    \|u\|_{H^1(\Omega)} \ \leq \ c \left( \|f\|_{H^{\me}(\Gamma_1)}\ +\
                                      \|g\|_{H^{\me}_{00}(\Gamma_2)'} \right).
\end{equation}
This is a standard result and we omit the proof.

%---------------------------------------
\subsection{The nonlinear case} \label{ssec:el-gem-prbl-nl}

Now, we extend the results of section~\ref{ssec:el-gem-prbl-l} to 
the following nonlinear mixed boundary value problem: find a 
$H^1$--solution of \\[2ex]
$ (MP) \hskip2cm
  \left\{ \begin{array}{rl}
    P(u)       = h\, ,&\!\! \mbox{in}\ \Omega \\
    u          = f\, ,&\!\! \mbox{at}\ \Gamma_1 \\
    q(u) u_\nu = g\, ,&\!\! \mbox{at}\ \Gamma_2
  \end{array} \right. \hfill $ \\[2ex]
where the coefficient $q \in H^1(\R)$, the right hand side 
$h \in L^2(\Omega)$, and the boundary conditions $f \in H^{\me}(\Gamma_1)$, 
$g \in H^{\me}_{00}(\Gamma_2)'$ are given functions.

Before proving the main result of this section, we present an existence 
(and uniqueness) theorem for abstract operator equations concerning 
quasi-monotone operators.

\begin{lemma} \label{lem:quasi-mon}
Let the space ${\cal V}$ satisfy $H^1_0(\Omega) \subset {\cal V} \subset 
H^1(\Omega)$ and assume that ${\cal V} \hookrightarrow L^2(\Omega)$ is 
compact. Let the operator $\bar{A}:{\cal V} \rightarrow {\cal V}'$ be 
given by $\bar{A}u = A(u,u)$, with $A:{\cal V} \times {\cal V} 
\rightarrow {\cal V}'$ defined by
\begin{equation}
\langle A(u,v), w \rangle_{\cal V} \ := \ 
\langle A_1(u,v), w \rangle_{\cal V} + \langle A_0u, w \rangle_{\cal V}
\end{equation}
where
\begin{eqnarray}
\langle A_1(u,v), w \rangle_{\cal V} & := &
\int_\Omega \Big\{ \sum\limits_{j = 1}^n a_j(x,u(x),\nabla v(x))\,
            \frac{\partial}{\partial x_j} w(x) \Big\}\, dx , \\
\langle A_0 u, w \rangle_{\cal V} & := &
\int_\Omega a_0(x,u(x),\nabla u(x)) \, w(x)\, dx
\end{eqnarray}
for all $u, v, w \in {\cal V}$. Here, the coefficient functions $a_j: 
\Omega \times \R \times \R^n \to \R$, $0 \le j \le n$ are supposed 
to satisfy
\begin{enumerate}
\item $a_j(x,\eta,\xi)$ is measurable in $x \in \Omega$ and continuous in 
$(\eta,\xi) \in \R \times \R^n$;
\item $|a_j(x,\eta,\xi)| \le c \left(k(x) + |\eta| + \|\xi\| \right)$ for 
almost all $x \in \Omega$, $\eta \in \R$, $\xi \in \R^n$, 
where $k \in L^2(\Omega)$;
\item $\sum\limits_{j = 1}^n \big( a_j(x,\eta,\xi)-a_j(x,\eta,\tilde{\xi}) 
\big) \big( \xi_j - \tilde{\xi}_j \big) > 0$ for almost all $x \in \Omega$, 
$\eta \in \R$ and $\xi \in \R^n / \{\tilde{\xi}\}$;
\item $\frac{\sum\limits_{j=1}^n a_j(x,\eta,\xi)\xi_j}{\|\xi\|+\|\xi\|} 
\to \infty$ as $\|\xi\| \rightarrow \infty$, uniformly for $\eta$ bounded at 
almost all $x \in \Omega$.
\end{enumerate}
Furthermore, let ${\cal F}$ belong to ${\cal V}'$. Then, if $\bar{A}$ is 
${\cal V}$-coercive and bounded, the equation
\begin{equation}
\bar{A}u = {\cal F}
\end{equation}
has a unique solution in ${\cal V}$.
\end{lemma}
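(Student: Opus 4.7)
The plan is to recognize this statement as a specialization of the classical existence theory for pseudo-monotone (quasi-monotone) operators, due to Browder, Brezis and Leray--Lions, and to verify that the structural assumptions (1)--(4) force $\bar A$ to fall within its scope. The hypotheses of boundedness and ${\cal V}$-coercivity of $\bar A$ are already assumed; what remains is to show that $\bar A$ is demicontinuous and pseudo-monotone, and then to appeal to the abstract existence theorem on a reflexive Banach space.

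As a preliminary step, I would verify that $A_1(u,v)$ and $A_0u$ are well defined as elements of ${\cal V}'$. By the Carath\'eodory condition (1), each $a_j(\cdot,u(\cdot),\nabla v(\cdot))$ is measurable, and the growth bound (2) puts it in $L^2(\Omega)$, so every integral converges absolutely and depends continuously on $w\in{\cal V}$. A routine argument based on Krasnoselski's theorem on Nemytski\u{\i} operators then shows that $\bar A$ is demicontinuous on ${\cal V}$.

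The central step is to establish pseudo-monotonicity of $\bar A$. Given $u_n\rightharpoonup u$ in ${\cal V}$ with $\limsup_n\langle \bar A u_n,u_n-u\rangle_{\cal V}\le0$, I would exploit the compact embedding ${\cal V}\hookrightarrow L^2(\Omega)$ to deduce $u_n\to u$ strongly in $L^2(\Omega)$ and, along a subsequence, a.e. Setting
$$
X_n:=\int_\Omega\sum_{j=1}^n\bigl(a_j(x,u_n,\nabla u_n)-a_j(x,u_n,\nabla u)\bigr)(\partial_j u_n-\partial_j u)\,dx,
$$
condition (3) yields $X_n\ge0$. Decomposing $\langle \bar A u_n,u_n-u\rangle_{\cal V}=X_n+R_n$, where $R_n$ gathers $\langle A_0 u_n,u_n-u\rangle_{\cal V}$ together with the cross term $\int\sum_j a_j(x,u_n,\nabla u)(\partial_j u_n-\partial_j u)\,dx$, I would use (1), (2), the strong $L^2$-convergence of $u_n$, and dominated convergence to show $R_n\to0$; hence $X_n\to0$. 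The classical Minty/Leray--Lions trick (inserting $v_\lambda=u+\lambda(w-u)$ into the monotonicity inequality and letting $\lambda\downarrow0$) then identifies the weak limits of $A_1(u_n,u_n)$ and $A_0 u_n$ as $A_1(u,u)$ and $A_0 u$ respectively, delivering pseudo-monotonicity and the required energy convergence.

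Once pseudo-monotonicity is in place, the Browder--Brezis theorem (a bounded, coercive, pseudo-monotone operator on a reflexive Banach space is surjective) produces a solution $u\in{\cal V}$ of $\bar A u={\cal F}$. For uniqueness, testing $\bar A u_1-\bar A u_2=0$ against $u_1-u_2$ and invoking (3) forces $\nabla u_1=\nabla u_2$ a.e.; the $\mathcal V$-coercivity, together with the inclusion $H^1_0(\Omega)\subset{\cal V}$ and a Poincar\'e-type estimate of the form \eqref{eq:poincarre}, then rules out a nonzero additive constant. The main obstacle I expect is the verification that $R_n\to0$: the interaction of the nonlinear argument $u_n$ inside $a_j(x,u_n,\nabla u)$ with the merely weak convergence of $\nabla u_n$ must be handled by extracting a.e. subsequences from the compact embedding and invoking the growth bound (2) for a dominated convergence argument. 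Once this is secured, the rest of the proof is a textbook application of the pseudo-monotone existence theorem.
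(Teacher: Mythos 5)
Your existence argument is, in substance, the same route the paper takes: the paper disposes of the whole lemma by citing the Browder--Vishik surjectivity result for monotone operators and its relaxation to quasi-monotone (pseudo-monotone) operators in \cite[Proposition~5.1 and Section~II.6]{Sh}, while you unpack what that citation encapsulates (Nemytski\u{\i}-type demicontinuity, the splitting $\langle \bar A u_n,u_n-u\rangle_{\cal V}=X_n+R_n$, the compact embedding ${\cal V}\hookrightarrow L^2(\Omega)$ to dispose of $R_n$, and the Minty/Leray--Lions identification of limits, followed by the Browder--Brezis surjectivity theorem). That half is sound.

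The genuine gap is in your uniqueness step. Condition (3) is strict monotonicity in the gradient slot $\xi$ \emph{at a fixed value of} $\eta$. Testing $\bar A u_1-\bar A u_2=0$ against $u_1-u_2$ gives
\[
\int_\Omega \sum_{j=1}^n \bigl(a_j(x,u_1,\nabla u_1)-a_j(x,u_2,\nabla u_2)\bigr)\,\partial_j(u_1-u_2)\,dx
\;+\;\int_\Omega \bigl(a_0(x,u_1,\nabla u_1)-a_0(x,u_2,\nabla u_2)\bigr)(u_1-u_2)\,dx\;=\;0,
\]
and the first integrand is \emph{not} nonnegative by (3), because the two coefficients are evaluated at different values of $\eta$ (namely $u_1$ and $u_2$); after inserting and removing $a_j(x,u_1,\nabla u_2)$ you are left with the cross term $\sum_j\bigl(a_j(x,u_1,\nabla u_2)-a_j(x,u_2,\nabla u_2)\bigr)\partial_j(u_1-u_2)$ and the $a_0$ difference, neither of which has a sign under (1)--(4). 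So the deduction ``invoking (3) forces $\nabla u_1=\nabla u_2$ a.e.'' does not follow; indeed, for general quasilinear coefficients $a_j(x,u,\nabla u)$, uniqueness is not a consequence of (1)--(4) together with boundedness and coercivity alone, which is precisely why the quoted abstract theory yields existence and why uniqueness needs an extra ingredient. The paper itself leaves this inside the citation to \cite{Sh}; in the concrete application of the lemma (Theorem~\ref{lem:ex-eind-gemischt-nl}, where $a_j(x,\eta,\xi)=q(\eta)\xi_j$ and $a_0=0$) uniqueness can be recovered from the special structure, e.g.\ via the transformation $Q(t)=\int_0^t q(s)\,ds$ that linearizes the equation, but that argument is not the one you gave and would have to be supplied separately.
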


\begin{sketch}
This lemma follows from an analog result concerning monotone operators by 
Browder--Vishik (see \cite[Proposition~5.1]{Sh}, page 60). One has 
just to relax the monotony assumptions to the class of quasi-monotone 
operators, as shown in \cite[Section~II.6]{Sh}, on page 74 and subsequent.
\end{sketch}

The next result guarantees existence and uniqueness of solution for 
problem (MP) and is the nonlinear analogous to the result stated 
in section~\ref{ssec:el-gem-prbl-l}.

\begin{theorem} \label{lem:ex-eind-gemischt-nl}
Let the function $q$ be such that $0 < q_{min} \le q \le q_{max} < \infty$ 
and $h \in L^2(\Omega)$ be given. Then for every pair of data $(f,g) \in
H^{\me}(\Gamma_1) \times H^{\me}_{00}(\Gamma_2)'$ the problem (MP) has a 
unique solution $u \in H^1(\Omega)$. 
\end{theorem}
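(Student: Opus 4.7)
The plan is to reduce (MP) to a homogeneous-Dirichlet problem on $\Gamma_1$ and then cast it as an abstract operator equation to which Lemma~\ref{lem:quasi-mon} can be applied. First I would pick a lift $\tilde f \in H^1(\Omega)$ with trace $f$ on $\Gamma_1$ (existence follows from the trace theorem together with the $C^\infty$--regularity of $\partial\Omega$) and set $v := u-\tilde f$, so that $v \in {\cal V} := H^1_0(\Omega\cup\Gamma_2)$. The weak formulation of (MP) then becomes: find $v\in{\cal V}$ with
$$\int_\Omega q(v+\tilde f)\,\nabla(v+\tilde f)\cdot\nabla w\,dx \;=\; \int_\Omega h\,w\,dx + \langle g,w\rangle_{\Gamma_2}, \qquad \forall\, w\in{\cal V},$$
where the right-hand side defines a bounded linear functional ${\cal F} \in {\cal V}'$ (using $h\in L^2(\Omega)$, $g \in H^{\me}_{00}(\Gamma_2)'$ and the continuous trace ${\cal V} \hookrightarrow H^{\me}(\Gamma_2)$).

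Next I would match this equation against the abstract form of Lemma~\ref{lem:quasi-mon} by choosing
$$a_j(x,\eta,\xi) \;:=\; q\bigl(\eta+\tilde f(x)\bigr)\bigl(\xi_j + \partial_j\tilde f(x)\bigr), \quad 1\le j\le n, \qquad a_0\equiv 0,$$
so that the induced operator $\bar A$ reproduces the bilinear expression on the left. The four structural hypotheses on the coefficients are then verified directly: measurability and continuity are inherited from the continuity of $q$; the growth bound $|a_j(x,\eta,\xi)| \le q_{max}(\|\nabla\tilde f(x)\|+\|\xi\|)$ holds with $k(x):=q_{max}\|\nabla\tilde f(x)\|\in L^2(\Omega)$; the strict monotonicity condition yields
$$\sum_{j=1}^n\bigl(a_j(x,\eta,\xi)-a_j(x,\eta,\tilde\xi)\bigr)(\xi_j-\tilde\xi_j) \;=\; q(\eta+\tilde f)\,\|\xi-\tilde\xi\|^2 \;\ge\; q_{min}\,\|\xi-\tilde\xi\|^2;$$
and the coercivity-type condition (4) follows at once from the estimate $\sum_j a_j\xi_j \ge q_{min}\|\xi\|^2 - q_{max}\|\nabla\tilde f\|\,\|\xi\|$.

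The remaining task is to show that $\bar A$ is bounded and ${\cal V}$--coercive. Boundedness is a direct consequence of the growth bound. For coercivity I would expand
$$\langle \bar A v, v\rangle \;\ge\; q_{min}\|\nabla v\|_{L^2}^2 - q_{max}\|\nabla\tilde f\|_{L^2}\|\nabla v\|_{L^2} \;\ge\; \tfrac{q_{min}}{2}\|\nabla v\|_{L^2}^2 - C\,\|\nabla\tilde f\|_{L^2}^2$$
using Young's inequality, and then invoke the Poincar\'e-type inequality \eqref{eq:poincarre} on $H^1_0(\Omega\cup\Gamma_2)$ to conclude that $\|\nabla v\|_{L^2}$ is equivalent to $\|v\|_{\cal V}$; hence $\langle \bar A v,v\rangle/\|v\|_{\cal V}\to\infty$ as $\|v\|_{\cal V}\to\infty$. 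Lemma~\ref{lem:quasi-mon} then delivers a unique $v\in{\cal V}$ with $\bar A v={\cal F}$, whence $u:=v+\tilde f$ is the unique $H^1(\Omega)$--solution of (MP).

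The main obstacle, as usual for such reductions, is the bookkeeping of the cross terms generated by the lift $\tilde f$ and the verification that they do not spoil either the monotonicity of the coefficients $a_j$ or the coercivity of $\bar A$; once these terms are absorbed using the two-sided bounds $0<q_{min}\le q\le q_{max}$ together with Young's inequality, the application of Lemma~\ref{lem:quasi-mon} becomes essentially mechanical.
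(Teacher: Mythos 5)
Your argument is correct, and it reaches the conclusion by a route that differs from the paper's in how the inhomogeneous Dirichlet datum is handled. The paper splits the proof in two: for $f=0$ it applies Lemma~\ref{lem:quasi-mon} with the simple coefficients $a_j(x,\eta,\xi)=q(\eta)\xi_j$, $a_0=0$, and for $f\neq 0$ it does \emph{not} shift the unknown but instead invokes a theorem of Brezis type (cited from Showalter) to solve the operator equation \eqref{gl:nl-gp} on the closed convex affine set $K=\tilde f+H^1_0(\Omega\cup\Gamma_2)$. You instead translate by the lift at the outset, $v=u-\tilde f$, and apply Lemma~\ref{lem:quasi-mon} once, with the modified coefficients $a_j(x,\eta,\xi)=q(\eta+\tilde f(x))(\xi_j+\partial_j\tilde f(x))$. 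What your route buys is that a single application of the quasi-monotone existence lemma suffices and no convex-set (variational inequality) result is needed; the price is the bookkeeping you describe, i.e.\ re-verifying conditions 1--4 with the $x$-dependent terms ($k(x)=q_{max}\,\|\nabla\tilde f(x)\|\in L^2$ for the growth bound, the cross term in the coercivity estimate absorbed by Young plus the Poincar\'e inequality \eqref{eq:poincarre}), whereas the paper's coefficient verification is trivial but its inhomogeneous step rests on an additional abstract theorem. Two minor points of care: the pairing with $g$ requires the trace of $v\in H^1_0(\Omega\cup\Gamma_2)$ to land in $H^{\me}_{00}(\Gamma_2)$ rather than merely $H^{\me}(\Gamma_2)$ (which it does, precisely because $v$ vanishes on $\Gamma_1$); and condition 4 of Lemma~\ref{lem:quasi-mon} holds for your coefficients only in the pointwise a.e.--in--$x$ sense, since the lower bound involves $\|\nabla\tilde f(x)\|$, which is not uniform in $x$ --- this is the intended reading of the hypothesis (the standard Leray--Lions conditions allow such integrable perturbations), and in any case the ${\cal V}$--coercivity of $\bar A$, which is what the lemma actually uses, is exactly what you verify directly, so no gap results. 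Finally, to conclude uniqueness of solutions of (MP) from uniqueness in $\tilde f+{\cal V}$ one uses that any $H^1(\Omega)$--solution with trace $f$ on $\Gamma_1$ differs from $\tilde f$ by an element of $H^1_0(\Omega\cup\Gamma_2)$; the paper is equally silent on this standard identification.
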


\begin{proof}
We consider first the homogeneous case $f = 0$. Setting ${\cal V} = 
H^1_0(\Omega\cup\Gamma_2)$ we derive by integration by parts the following 
variational formulation for (MP):
\begin{equation}\label{variation}
  \begin{array}{l}
     \hskip-1cm \mbox{Find $u_0 \in {\cal V}$ such that} \\[1ex]
     \hskip0.5cm \D\int_\Omega q(u_0)\nabla u_0 \cdot \nabla v\, dx \ = \ 
     \int_\Omega h(x) v\, dx + \int_{\Gamma_2} g v\, d\Gamma_2 \\[2ex]
     \hskip-1cm \mbox{holds for all} v \in {\cal V}.
  \end{array} \hfill
\end{equation}
This is equivalent to solve the operator equation
\begin{equation} \label{gl:nl-gp}
\bar{A}u_0 = {\cal F},
\end{equation}
where $\bar{A}$ is given by $a_0(x,\eta,\xi) = 0$ and $a_j(x,\eta,\xi) = 
q(\eta)\xi_j$ (see Lemma~\ref{lem:quasi-mon}) and ${\cal F} \in {\cal V}'$ is defined by
\begin{equation}
\langle{\cal F},v\rangle_{\cal V} \ = \ 
\int_\Omega h(x) v\, dx + \int_{\Gamma_2} g v\, d\Gamma_2 \ 
\mbox{ for all } \ v, w \in {\cal V}.
\end{equation}
We easily see that the conditions 1--4 of Lemma~\ref{lem:quasi-mon} are 
satisfied. Indeed,
\begin{enumerate}
\item $ q(\eta)\xi_j$ is continuous in $(\eta,\xi) \in \R \times \R^n$;
\item $|q(\eta)\xi_j| \le q_{\max} \|\xi\|$;
\item $\sum\limits_{j = 1}^n \left(q(\eta)\xi_j-q(\eta)\tilde{\xi}_j \right) 
\left(\xi_j - \tilde{\xi}_j \right) \ge 
q_{\min} \sum\limits_{j = 1}^n \left(\xi_j - \tilde{\xi}_j \right)^2 > 0$ 
for all $\eta \in \R$, $\xi \neq \tilde{\xi} \in \R^n$;
\item $\frac{\sum\limits_{j = 1}^n q(\eta)\xi_j\xi_j}{\|\xi\|+\|\xi\|} \ge 
\frac{q_{\min}\|\xi\|^2}{2\|\xi\|} \rightarrow \infty$ as $\|\xi\| \to \infty$.
\end{enumerate}
Since the coercitivity of $\bar{A}$ follows from the Poincar\'e inequality 
and the boundedness is given by the bound $q_{\max}$, the existence of a 
unique $u_0 \in {\cal V}$ can be guaranteed.

Next we consider the non-homogeneous case. By the trace theorem, we obtain a 
function $\tilde{f} \in H^1(\Omega)$ such that $\tilde{f}|_{\Gamma_1} = f$. 
Defining the closed convex non-empty affine subspace $K := \tilde{f} + H_0^1 
(\Omega\cup\Gamma_2)$, it follows from a result by Brezis (see e.g. 
\cite[Theorem~2.3]{Sh}, on page 42) that equation \eqref{gl:nl-gp} has a 
unique solution in $K$. This solution clearly satisfies both the differential 
equation $P(u) = h$ and the mixed boundary conditions.
\end{proof}
%
%
%
%-----------------------------------------------------------------------------
\section{Cauchy problems and fixed point equations} \label{sec:nlecp-fpgl}

In this section we characterize the solution of (CP) as solution of 
certain fixed point equations. We define $\Gamma_1 := \Gamma$, $\Gamma_2 
:= \partial\Omega \backslash \Gamma$, such that $\Gamma_1\cap\Gamma_2 = 
\emptyset$ and $\overline{\Gamma_1\cup\Gamma_2} = \partial\Omega$. 
Further, we assume that the coefficient $q$ of the the second order 
elliptic operator $P$ defined in \eqref{gl:oper-P-def} satisfies
\begin{itemize}
\item[\it A1)] $q \in C^\infty(\R)$;
\item[\it A2)] $q(t) \in [q_{min},q_{max}]$ for all $t \in \R$,\, where\, 
$0 < q_{min} < q_{max} < \infty$.
\end{itemize}
Given the Cauchy data $(f,g) \in H^{\me}(\Gamma_1) \times H^{\me}_{00} 
(\Gamma_1)'$ and $h \in L^2(\Omega)$, we assume that there exists a 
$H^1$--solution of problem \\[2ex]
$ (CP) \hskip2cm
   \left\{ \begin{array}{rl}
      P u        = h\, ,&\!\! \mbox{in}\ \Omega \\
      u          = f\, ,&\!\! \mbox{at}\ \Gamma_1 \\
      q(u) u_\nu = g\, ,&\!\! \mbox{at}\ \Gamma_1
   \end{array} \right. $ \\[2ex]
and we are mainly interested in the determination of the Neumann trace
\begin{equation} \label{gl:sol-nlcp}
\bar{\vphi} \, := \, q(u) u_{\nu|_{\Gamma_2}} \in H^{\me}_{00}(\Gamma_2)' .
\end{equation}
Notice that, once $\bar{\vphi}$ is known, the solution of (CP) can be 
determined as the solution of a well posed mixed boundary value problem, 
namely
$$ \left\{ \begin{array}{rl}
      P u        = h          \, ,&\!\! \mbox{in}\ \Omega \\
      u          = f          \, ,&\!\! \mbox{at}\ \Gamma_1 \\
      q(u) u_\nu = \bar{\vphi}\, ,&\!\! \mbox{at}\ \Gamma_2.
  \end{array} \right. $$

In order to obtain the fixed point equations for $\bar{\vphi}$, we follow 
two separate approaches, which we describe next:
\medskip

\noindent {\bf First approach:} Transformation of (CP) into a linear Cauchy problem
\medskip

\noindent  The first step is to introduce the real function
$$  Q(t) := \int_0^t q(s)\, ds . $$
Given $u \in H^1(\Omega)$, the function $U := Q(u)$ is also in $H^1(\Omega)$ 
and satisfies
$$  \Delta U = \nabla \cdot (\nabla Q(u))
             = \nabla \cdot (q(u) \nabla u) = P(u) . $$
Further, $U_\nu = q(u) u_\nu$ holds at $\partial \Omega$. Thus, if $u$ 
is the solution of problem (CP), the function $U$ solves the linear 
Cauchy problem \\[2ex]
$ (LCP) \hskip2cm
  \left\{ \begin{array}{rll}
      \Delta U = &\!\! h   \, ,&\!\! \mbox{in}\ \Omega \\
             U = &\!\! Q(f)\, ,&\!\! \mbox{at}\ \Gamma_1 \\
         U_\nu = &\!\! g   \, ,&\!\! \mbox{at}\ \Gamma_1
  \end{array} \right. $ \\[2ex]
Reciprocally, if problem (LCP) is consistent for some data $(Q(f),g)$ with 
solution $U$, it follows from $Q' = q > 0$ (we use assumption {\it A2)}), 
that $u := Q^{-1}(U)$ is well defined in $H^1(\Omega)$ and solves problem 
(CP).

Thus, it is enough to find the solution of the (consistent) Cauchy problem 
(LCP). Since this is a linear problem, a fixed point equation based on the 
composition of adequate mixed boundary value problems is known from the 
literature (see, e.g., \cite{Le} or \cite{KMF}). This fixed point equation 
is obtained as follows:

\begin{itemize}
\item[\it 1)] Let $\bar{\vphi}$ be given by \eqref{gl:sol-nlcp}. Notice 
that $U$, the solution of (LCP), solves the mixed boundary value problem:
$$  \Delta U = h         \ \mbox{ in } \Omega ,\ \ \ \ 
           U = f          \ \mbox{ at } \Gamma_1 ,\ \ \ \ 
       U_\nu = \bar{\vphi} \ \mbox{ at } \Gamma_2 . $$
Thus, we define the operator ${\cal L}_n: H^{\me}_{00}(\Gamma_2)' \ni 
\vphi \mapsto w_{|_{\Gamma_2}} \in H^{\me}(\Gamma_2)$, where $w$ solves
$$  \Delta w = h   \ \mbox{ in } \Omega ,\ \ \ \ 
           w = f    \ \mbox{ at } \Gamma_1 ,\ \ \ \ 
       w_\nu = \vphi \ \mbox{ at } \Gamma_2 . $$

\item[\it 2)] Notice that $U$ also solves the mixed boundary value problem:
$$  \Delta U = h                  \ \mbox{ in } \Omega ,\ \ \ \ 
       U_\nu = g                   \ \mbox{ at } \Gamma_1 ,\ \ \ \ 
           U = {\cal L}_n(\bar{\vphi}) \ \mbox{ at } \Gamma_2 . $$
This motivates the definition of the operator ${\cal L}_d: 
H^{\me}(\Gamma_2) \ni \psi \mapsto v_{\nu|_{\Gamma_2}} \in 
H^{\me}_{00}(\Gamma_2)'$, where $v$ solves
$$  \Delta v = h   \ \mbox{ in } \Omega ,\ \ \ \ 
       v_\nu = g    \ \mbox{ at } \Gamma_1 ,\ \ \ \ 
           v = \psi \ \mbox{ at } \Gamma_2 . $$

\item[\it 3)] Finally, we define the operator
\begin{equation} \label{gl:Tl-def}
{\cal T}: H^{\me}_{00}(\Gamma_2)' \, \ni \, \vphi \, \longmapsto \,
    {\cal L}_d({\cal L}_n(\vphi)) \, \in \, H^{\me}_{00}(\Gamma_2)'
\end{equation}
and observe that ${\cal T} \bar{\vphi} = \bar{\vphi}$. Reciprocally, if 
$\vphi$ is a fixed point of $\cal T$, it follows from an uniqueness 
argument for problem (LCP) that $\vphi = \bar{\vphi}$ (see, e.g., proof 
of Proposition~\ref{prop:equiv-cp-fpgl}).
\end{itemize}

\noindent {\bf Second approach:} Nonlinear mixed problems
\medskip

\noindent  In this approach we consider mixed boundary value problems 
similar to those introduced previously. However, no transformation 
is performed. We start by defining the operators
\begin{equation}
L_n: H^{\me}_{00}(\Gamma_2)' \to H^{\me}(\Gamma_2)\, ,\ \ 
L_d: H^{\me}(\Gamma_2) \to H^{\me}_{00}(\Gamma_2)'
\end{equation}
by $L_n(\vphi) := w_{|_{\Gamma_2}}$ and  $L_d(\psi) := q(v) 
v_{\nu|_{\Gamma_2}}$, where the $H^1(\Omega)$--functions $w$, $v$ solve 
the nonlinear mixed boundary value problems
\medskip

\centerline{$
         P w = h     \ \mbox{ in } \Omega  \, ,\ \ \ \ 
           w = f     \ \mbox{ at } \Gamma_1\, ,\ \ \ \ 
  q(w) w_\nu = \vphi \ \mbox{ at } \Gamma_2$}

\noindent  and

\centerline{$
         P v = h    \ \mbox{ in } \Omega  \, ,\ \ \ \ 
  q(v) v_\nu = g    \ \mbox{ at } \Gamma_1\, ,\ \ \ \ 
           v = \psi \ \mbox{ at } \Gamma_2$}
\medskip

\noindent  respectively. Now, defining the operator 
\begin{equation} \label{gl:T_def}
T: H^{\me}_{00}(\Gamma_2)' \, \ni \, \vphi \, \longmapsto \,
   L_d(L_n(\vphi)) \, \in \, H^{\me}_{00}(\Gamma_2)'
\end{equation}
and observing that $L_n(\bar{\vphi}) = u_{|_{\Gamma_2}}$ and 
$L_d(u_{|_{\Gamma_2}}) = \bar{\vphi}$, we obtain the desired 
characterization of $\bar{\vphi}$ as a solution of the fixed point 
equation for the operator $T$. In the next proposition we formalize 
this result as well as it's converse.

\begin{prop} \label{prop:equiv-cp-fpgl}
If problem (CP) admits a solution (say $u$), then $\bar{\vphi} := q(u) 
u_{\nu|_{\Gamma_2}}$ is a fixed point of the operator $T$ defined in 
\eqref{gl:T_def}. Conversely, if $\bar{\vphi}$ is a fixed point of $T$, 
the Cauchy problem (CP) is solvable and it's solution (say $u$) satisfies 
$q(u) u_{\nu|_{\Gamma_2}} = \bar{\vphi}$.
\end{prop}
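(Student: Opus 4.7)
The plan is to exploit the two main tools established earlier in the paper: Theorem~\ref{lem:ex-eind-gemischt-nl} (existence and uniqueness for the nonlinear mixed boundary value problem) provides that the operators $L_n$ and $L_d$ in \eqref{gl:T_def} are well defined, and Theorem~\ref{satz:H1-eindeut-nlin} (uniqueness for the nonlinear elliptic Cauchy problem) is what allows us to glue together the outputs of these operators in the converse direction.

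For the forward implication I would start with $u$ solving (CP) and set $\bar{\vphi} := q(u) u_{\nu|_{\Gamma_2}}$. The key observation is that $u$ itself is already a valid $H^1$-solution of the mixed problem defining $L_n(\bar{\vphi})$, since by construction it has Dirichlet trace $f$ on $\Gamma_1$ and co-normal trace $\bar{\vphi}$ on $\Gamma_2$. Uniqueness in Theorem~\ref{lem:ex-eind-gemischt-nl} forces the defining solution $w$ to coincide with $u$, whence $L_n(\bar{\vphi}) = u_{|_{\Gamma_2}}$. A symmetric argument applied to the mixed problem defining $L_d(u_{|_{\Gamma_2}})$ then gives $L_d(u_{|_{\Gamma_2}}) = q(u) u_{\nu|_{\Gamma_2}} = \bar{\vphi}$, so $T\bar{\vphi} = \bar{\vphi}$.

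For the converse I would assume $T\bar{\vphi} = \bar{\vphi}$ and invoke Theorem~\ref{lem:ex-eind-gemischt-nl} twice to produce $w \in H^1(\Omega)$ solving the mixed problem with Dirichlet datum $f$ on $\Gamma_1$ and co-normal datum $\bar{\vphi}$ on $\Gamma_2$, and $v \in H^1(\Omega)$ solving the complementary mixed problem with co-normal datum $g$ on $\Gamma_1$ and Dirichlet datum $\psi := L_n(\bar{\vphi}) = w_{|_{\Gamma_2}}$ on $\Gamma_2$. The fixed-point hypothesis $L_d(\psi) = \bar{\vphi}$ then reads precisely as $q(v) v_{\nu|_{\Gamma_2}} = \bar{\vphi} = q(w) w_{\nu|_{\Gamma_2}}$. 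Together with $v_{|_{\Gamma_2}} = \psi = w_{|_{\Gamma_2}}$, this shows that $w$ and $v$ agree on $\Gamma_2$ in \emph{both} Cauchy data, while satisfying $Pw = Pv = h$ in $\Omega$.

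The main step, and the principal obstacle, is now to conclude $w \equiv v$ in $\Omega$. The difference solves a nonlinear elliptic Cauchy problem with zero Cauchy data on the manifold $\Gamma_2$, so Theorem~\ref{satz:H1-eindeut-nlin} applies (with $\Gamma_2$ playing the role of the Cauchy manifold) and delivers $w = v$. Setting $u := w = v$, we inherit $u = f$ on $\Gamma_1$ from the construction of $w$ and $q(u) u_\nu = g$ on $\Gamma_1$ from the construction of $v$, together with $Pu = h$ in $\Omega$; hence $u$ solves (CP), and by construction $q(u) u_{\nu|_{\Gamma_2}} = \bar{\vphi}$, completing the sketch.
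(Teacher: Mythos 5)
Your argument is correct and follows essentially the same route as the paper: the forward direction amounts to reading off the definition of $T$ (via uniqueness of the mixed problems from Theorem~\ref{lem:ex-eind-gemischt-nl}), and the converse identifies the two mixed-problem solutions $w$ and $v$ through the Cauchy uniqueness result of Theorem~\ref{satz:H1-eindeut-nlin} applied with $\Gamma_2$ as the Cauchy manifold, exactly as in the paper's proof. The only cosmetic difference is that the paper additionally invokes uniqueness of (CP) at the very end to identify the constructed solution $w$ with ``the'' solution $u$ of (CP), a point you cover implicitly by setting $u := w$.
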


\begin{proof}
The first part follows directly from the definition of $T$. Now let 
$\bar{\vphi}$ be a fixed point of $T$. The $H^1$--functions $w$ and $v$ 
satisfy
$$ w_{|_{\Gamma_2}} \, = \, v_{|_{\Gamma_2}} \ \ \mbox{ and } \ \ 
   q(w) w_{\nu|_{\Gamma_2}} \, = \, q(v) v_{\nu|_{\Gamma_2}} . $$
Argumenting with the uniqueness result in Theorem~\ref{satz:H1-eindeut-nlin} 
we conclude that $w \equiv v$. Thus, the function $w$ satisfies
\begin{equation}
         P w = h \ \mbox{ in } \Omega   ,\ \ \ \ 
           w = f \ \mbox{ at } \Gamma_1 ,\ \ \ \ 
  q(w) w_\nu = g \ \mbox{ at } \Gamma_1 ,
\end{equation}
i.e. $w$ is a solution of problem (CP). Since we have uniqueness of 
solutions for (CP) (again by Theorem~\ref{satz:H1-eindeut-nlin}), it 
follows $q(u) u_{\nu|_{\Gamma_2}} = q(w) w_{\nu|_{\Gamma_2}} = \bar{\vphi}$.
\end{proof}
\medskip

Using Theorems~\ref{satz:H1-eindeut-nlin} and \ref{prop:equiv-cp-fpgl} 
we can reinterpret the uniqueness result in Section~\ref{ssec:el-gem-prbl-nl} 
in terms of the fixed point equation $T(\vphi) = \vphi$:

\begin{corollary}
Given a pair of consistent Cauchy data $(f,g)$, the nonlinear operator 
$T$ defined in \eqref{gl:T_def} has exactly one fixed point in the 
Hilbert space $H^{\me}_{00}(\Gamma_2)'$.
\end{corollary}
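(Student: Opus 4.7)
The plan is to leverage the equivalence already established in Proposition~\ref{prop:equiv-cp-fpgl} together with the uniqueness result from Theorem~\ref{satz:H1-eindeut-nlin}, so that existence and uniqueness of a fixed point for $T$ follow essentially by unwinding definitions.

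For existence, consistency of $(f,g)$ means that $(CP)$ admits an $H^1(\Omega)$ solution $u$. The direct implication in Proposition~\ref{prop:equiv-cp-fpgl} then delivers immediately that $\bar{\vphi} := q(u) u_{\nu|_{\Gamma_2}}$, which lies in $H^{\me}_{00}(\Gamma_2)'$ by the definition of the Neumann trace in this setting, is a fixed point of $T$.

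For uniqueness, I would take two arbitrary fixed points $\vphi_1, \vphi_2 \in H^{\me}_{00}(\Gamma_2)'$ of $T$. The converse direction of Proposition~\ref{prop:equiv-cp-fpgl} produces, for each $i=1,2$, a solution $u_i \in H^1(\Omega)$ of the Cauchy problem $(CP)$ with the same data $(f,g,h)$, and satisfying $q(u_i)(u_i)_{\nu|_{\Gamma_2}} = \vphi_i$. Since $(f,g)$ is consistent and the nonlinear Cauchy problem has at most one $H^1$ solution by Theorem~\ref{satz:H1-eindeut-nlin}, we conclude $u_1 \equiv u_2$ in $\Omega$. Taking traces on $\Gamma_2$ gives $\vphi_1 = q(u_1)(u_1)_{\nu|_{\Gamma_2}} = q(u_2)(u_2)_{\nu|_{\Gamma_2}} = \vphi_2$.

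There is no real obstacle here; the only subtlety worth flagging is making sure the objects $q(u_i)(u_i)_\nu$ are interpreted in the correct dual space $H^{\me}_{00}(\Gamma_2)'$ so that the equality $\vphi_1 = \vphi_2$ makes sense in the ambient Hilbert space of the statement. This follows from the regularity of the $H^1$ solutions together with the boundedness of $q$ provided by assumption \emph{A2)}, exactly as used implicitly in the construction of $T$ in \eqref{gl:T_def}.
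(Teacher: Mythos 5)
Your argument is correct and follows exactly the route the paper intends: existence of the fixed point from the consistency of $(f,g)$ via the direct implication of Proposition~\ref{prop:equiv-cp-fpgl}, and uniqueness by passing from any fixed point back to an $H^1$ solution of (CP) via the converse implication and then invoking the uniqueness statement of Theorem~\ref{satz:H1-eindeut-nlin}. Nothing further is needed.
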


\begin{remark}
In the first approach $\cal T$ is an affine operator, since ${\cal L}_n$ 
and ${\cal L}_d$ are both affine as well. The affine part of $\cal T$ 
depends only on the Cauchy data $(f,g)$ and on the right hand side $h$. 
Denoting this affine term by $z_{f,g} \in H^{\me}_{00}(\Gamma_2)'$ and 
the linear part of $\cal T$ by ${\cal T}_l$, it follows that 
$\bar{\vphi}$ is a solution of the linear Cauchy problem if and only 
if it is a solution of
\begin{equation}
(I - {\cal T}_l) \, \vphi \ = \ z_{f,g} .
\end{equation}
\end{remark}
%
%
%
%-----------------------------------------------------------------------------
\section{The Mann iteration} \label{sec:mann}

In this section we present a brief overview of the iterative method 
introduced by W.\;Mann in 1953 (see \cite{Ma}). Given a Banach space $X$ and 
$E \subset X$, Mann considered the problem of approximating the solution 
of the fixed point equation for a continuous operator $T: E \to E$.

In order to avoid the problem of existence of fixed points, Mann assumed the 
subset $E$ to be convex and compact (the existence question is than promptly 
answered by the Schauder fixed point theorem; see \cite{Sc}). Strongly 
influenced by the works of Ces\`aro and Topelitz, who used mean value methods 
in the summation of divergent series, Mann proposed a mean value iterative 
method based on the Picard iteration ($x_{k+1} := T(x_k)$), which we shall 
present next.

Let $A$ be the (infinite) lower triangular matrix
$$ A \, = \, \left( \begin{array}{cccccc}
     1      & 0      & 0      & \cdots & 0      & \cdots \\
     a_{21} & a_{22} & 0      & \cdots & 0      & \cdots \\
     a_{31} & a_{32} & a_{33} & \cdots & 0      & \cdots \\
     \vdots & \vdots & \vdots & \ddots & \vdots & \vdots
   \end{array} \right) , $$
with coefficients $a_{ij}$ satisfying 
\begin{itemize}
\item[\it i)]   $a_{ij} \, \ge \, 0\, ,\ i,j = 1, 2, \ldots$;
\item[\it ii)]  $a_{ij} \, = \, 0\, ,\ j > i$;
\item[\it iii)] $\T\sum\limits_{j=1}^i a_{ij} \, = \, 1\, , \ 
                i = 1, 2, \ldots$.
\end{itemize}
Starting with an arbitrary element $x_1 \in E$, the {\em Mann iteration} is 
defined by
\begin{tt}
\begin{itemize} \itemsep0.1ex
\item[1.] Choose $x_1 \in E$;
\item[2.] For $k = 1, 2, \ldots$ do \\[0.5ex]
     \mbox{\ \ } $v_k := \sum_{j=1}^k \, a_{kj} \, x_j$; \\[1ex]
     \mbox{\ \ } $x_{k+1} := T(v_k)$;
\end{itemize}
\end{tt}
and is briefly denoted by $M(x_1, A, T)$. Notice that with the particular 
choice $A = I$, this method corresponds to the usual Picard iterative 
process. Next we state the main theorem in \cite{Ma}.

\begin{lemma} \label{lemma:mann}

Let $X$ be a Banach space, $E \subset X$ a convex compact subset, $T: E \to E$ 
continuous. Further, let $\{ x_k \}$, $\{ v_k \}$ be the sequences generated 
by the iteration $M(x_1, A, T)$. If either of the above sequences converges, 
then the other also converges to the same point, and their common limit is a 
fixed point of $T$.
\end{lemma}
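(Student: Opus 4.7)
The proof hinges on three ingredients: continuity of $T$, compactness (and in particular boundedness) of $E$, and a Silverman--Toeplitz style averaging lemma for the row-stochastic lower-triangular matrix $A$. My plan is to first isolate the Toeplitz step: if $(y_k) \subset E$ converges to some $y^{\ast} \in E$, then the averaged sequence $\tilde y_k := \sum_{j=1}^k a_{kj} y_j$ also converges to $y^{\ast}$. Given $\eps > 0$, I would pick $N$ with $\|y_j - y^{\ast}\| < \eps$ for $j \ge N$, split
$$\tilde y_k - y^{\ast} \ = \ \sum_{j=1}^{N-1} a_{kj}(y_j - y^{\ast}) \ + \ \sum_{j=N}^{k} a_{kj}(y_j - y^{\ast}),$$
bound the tail by $\eps \sum_{j=1}^k a_{kj} = \eps$ using condition (iii), and bound the head by $D\sum_{j=1}^{N-1} a_{kj}$, where $D$ is the diameter of $E$.

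With this lemma in hand, both implications follow quickly. Suppose $x_k \to x^{\ast}$. Applying the Toeplitz step to $(x_k)$ gives $v_k \to x^{\ast}$, and then continuity of $T$ yields $x_{k+1} = T(v_k) \to T(x^{\ast})$; uniqueness of limits forces $T(x^{\ast}) = x^{\ast}$. Conversely, suppose $v_k \to v^{\ast}$. Continuity gives $x_{k+1} = T(v_k) \to T(v^{\ast})$, so $(x_k)$ converges to $\ell := T(v^{\ast})$. Applying the Toeplitz step now to $(x_k)$ yields $v_k \to \ell$, and by uniqueness of the limit $v^{\ast} = \ell = T(v^{\ast})$. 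In either case the common limit is a fixed point of $T$.

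The main obstacle is the Toeplitz step itself: the head bound $D\sum_{j=1}^{N-1} a_{kj}$ vanishes in $k$ only if the columns of $A$ decay, i.e.\ $\lim_{k\to\infty} a_{kj} = 0$ for each fixed $j$. This regularity condition is not explicitly among (i)--(iii), but it is the classical Silverman--Toeplitz hypothesis, and without it Mann's averaged sequence can fail to track the underlying sequence. I would therefore either assume it is tacitly in force here (it does hold for the concrete choices of $A$ used in the sequel, in particular for the segmenting Mann scheme) or include it as a fourth explicit hypothesis on $A$. Compactness of $E$ plays only the modest role of providing the diameter $D$ and ensuring that the common limit actually lies in $E$; it is not otherwise used in the two implications.
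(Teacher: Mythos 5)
The paper offers no proof of this lemma at all: it is quoted verbatim as the main theorem of Mann's paper \cite{Ma}, so there is no internal argument to compare against. Your proof is the classical route and is essentially Mann's own: a Silverman--Toeplitz averaging step combined with continuity of $T$, with compactness of $E$ entering only through boundedness (the diameter $D$) and closedness (so the limits stay in $E$, where $T$ is defined and continuous). Both implications are handled correctly given your Toeplitz step.

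Your remark about the missing hypothesis is the substantive point, and you are right that it is genuinely needed rather than merely convenient: conditions {\it i)}--{\it iii)} alone do not imply the conclusion. A one-line counterexample: take $a_{k1}=1$ for all $k$ and all other entries zero; then $v_k\equiv x_1$ while $x_k\equiv T(x_1)$ for $k\ge 2$, so both sequences converge, but to different limits whenever $x_1$ is not a fixed point, and neither limit need be fixed by $T$. The column-decay condition $\lim_{k\to\infty}a_{kj}=0$ for each fixed $j$ (equivalently, regularity of $A$ as a summability matrix) is part of Mann's original hypotheses and is tacitly assumed here as well -- the paper itself alludes to it when it says Dotson's extension uses ``the regularity of the matrix $A$'', and the Ces\`aro matrix used in Section~\ref{sec:numeric} satisfies it. So your choice to state it as an explicit fourth hypothesis (or to declare it tacitly in force) is the correct reading of the lemma, and with it your argument is complete.
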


In that paper, the case where neither of the sequences $\{ x_k \}$, 
$\{ v_k \}$ converges is also considered. Under additional requirements 
on the coefficients $a_{ij}$, a relation between the sets of limit 
points of $\{ x_k \}$ and $\{ v_k \}$ is proven.

Many authors considered the Mann iteration in other frameworks. In the 
sequel we present some of the main results related, which will be useful 
for our further discussion.

In \cite{Op}, Z.\;Opial considers the very special case where $X$ is a 
Hilbert space, $E \subset X$ a closed convex subset, $T: E \to E$ a 
nonexpansive regular asymptotic application with non empty fixed point set, 
$A = I$. In the main theorem it is proven that $\{ x_k \}$ converges weakly 
to some fixed point of $T$ (see \cite{EnLe} for a generalization).

In \cite{Do}, W.\;Dotson extends the proof in \cite{Ma} to the case in 
which $X$ is a locally convex Hausdorff vector space and $E \subset X$ is a 
convex closed subset. This is achieved by using the regularity of the matrix 
$A$ and some properties of the continuous semi-norms which generate the 
topology of $E$.

In \cite{SeDo}, H.\;Senter and W.\;Dotson assume $X$ to be an uniformly convex 
Banach space, $E \subset X$ a closed bounded convex subset, $T: E \to E$ 
nonexpansive. Under special assumptions on the fixed point set of $T$, they 
prove that $\{ x_k \}$ converge to some fixed point of $T$.

In \cite{En}, H.W.\;Engl assumes $X$ to be an Opial normed space, 
$E \subset X$ weakly compact, $T: E \to X$ nonexpansive, $\sum a_{kk}$ 
divergent. In the main theorem it is proven that if $\{ x_k \}$ is well 
defined for some $x_1$ (i.e. $x_k \in E$, $\forall k$) then a (unique) 
fixed point $\bar{x}$ exists and $x_k$ converges weakly to $\bar{x}$. Further, 
$\{\bar{x}\}$ is characterized as the Chebyshef-center of the set $\{ x_k \}$.

In \cite{Gr1}, C.\;Groetsch considers a variant of the Mann iteration. The 
Matrix $A$ is assumed to be {\em segmenting}, i.e. additionally to properties 
\,{\it i)}, \,{\it ii)} and \,{\it iii)}, the coefficients $a_{ij}$ have also 
to satisfy
\begin{itemize}
\item[\it iv)] $a_{i+1,j} \, = \, (1 - a_{i+1,i+1}) \, a_{ij}$, \ $j \le i$.
\end{itemize}
One can easily check that, under assumptions {\it i)}, \dots, {\it iv)}, 
\,$v_{k+1}$ can be written as the convex linear combination
\begin{equation} \label{eq:segment}
v_{k+1} \ = \ (1 - d_k) v_k \, + \, d_k T(v_k) ,
\end{equation}
where $d_k := a_{k+1,k+1}$. In other words, $v_{k+1}$ lies on the line 
segment joining $v_k$ and $x_{k+1} = T(v_k)$, what justifies the 
denomination of {\em segmenting matrix}. Notice that the choice of 
the diagonal elements $d_k$ determines completely the matrix $A$. 
Next we enunciate the main theorem in \cite{Gr1}:

\begin{lemma} \label{lemma:groe}

Let $X$ be an {\em uniformly convex} Banach space, $E \subset X$ a convex 
subset, $T: E \to E$ a nonexpansive operator with at least one fixed point in 
$E$. If $\sum_{k=1}^\infty \, d_k (1-d_k)$ diverges, then the sequence 
$\{ (I-T) v_k \}$ converges strongly to zero, for every $x_1 \in E$.
\end{lemma}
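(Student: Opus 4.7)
The plan is to follow the classical argument for segmenting Mann iterations. Fix a point $p\in E$ with $Tp=p$, which exists by hypothesis. The strategy has three stages: (i) show that $\{\|v_k-p\|\}$ is non-increasing, so $\{v_k\}$ stays bounded and we have a reservoir of ``energy'' to dissipate; (ii) use the quantitative form of uniform convexity to convert the decrease of $\|v_k-p\|^2$ into a summability statement about $(I-T)v_k$; (iii) show $\|(I-T)v_k\|$ is itself monotone in order to upgrade a $\liminf$ conclusion to a genuine limit.

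For stage (i), the recursion $v_{k+1}=(1-d_k)v_k+d_k T(v_k)$ together with nonexpansiveness gives $\|v_{k+1}-p\|\le (1-d_k)\|v_k-p\|+d_k\|Tv_k-Tp\|\le \|v_k-p\|$, so the sequence $\{\|v_k-p\|\}$ converges and each $v_k$ lies in the closed ball of radius $R:=\|v_1-p\|$ around $p$. For stage (ii) I would invoke the uniform-convexity inequality (in the form used by Groetsch): there is a continuous, strictly increasing $g_R:[0,\infty)\to[0,\infty)$ with $g_R(0)=0$ such that $\|(1-\lambda)u+\lambda w\|^2\le (1-\lambda)\|u\|^2+\lambda\|w\|^2-\lambda(1-\lambda)\,g_R(\|u-w\|)$ whenever $\|u\|,\|w\|\le R$ and $\lambda\in[0,1]$. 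Applying it with $u=v_k-p$, $w=Tv_k-p$ and $\lambda=d_k$, and using $\|Tv_k-p\|\le\|v_k-p\|$, yields $\|v_{k+1}-p\|^2\le\|v_k-p\|^2-d_k(1-d_k)\,g_R(\|(I-T)v_k\|)$. Telescoping gives $\sum_{k\ge 1} d_k(1-d_k)\,g_R(\|(I-T)v_k\|)\le\|v_1-p\|^2<\infty$, and since $\sum d_k(1-d_k)$ diverges and $g_R$ is strictly increasing with $g_R(0)=0$, one concludes $\liminf_k\|(I-T)v_k\|=0$.

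For stage (iii) I would show $\|(I-T)v_{k+1}\|\le\|(I-T)v_k\|$. Decomposing $v_{k+1}-Tv_{k+1}=(1-d_k)(v_k-Tv_{k+1})+d_k(Tv_k-Tv_{k+1})$ and using $v_{k+1}-v_k=-d_k(I-T)v_k$ together with nonexpansiveness, one estimates $\|v_k-Tv_{k+1}\|\le (1+d_k)\|(I-T)v_k\|$ and $\|Tv_k-Tv_{k+1}\|\le d_k\|(I-T)v_k\|$, so that $\|(I-T)v_{k+1}\|\le (1-d_k)(1+d_k)\|(I-T)v_k\|+d_k^2\|(I-T)v_k\|=\|(I-T)v_k\|$. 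Combined with $\liminf=0$ this forces $\|(I-T)v_k\|\to 0$ strongly, which is the claim. The main obstacle is securing the quantitative inequality in stage (ii): the usual textbook statement of uniform convexity bounds only the midpoint $\lambda=1/2$, and one must refine it to handle arbitrary convex combinations weighted by $d_k(1-d_k)$; it is exactly here that uniform convexity (as opposed to mere strict convexity) is genuinely used, and without this refinement one cannot extract enough decay from a single iterate of the segmenting recursion.
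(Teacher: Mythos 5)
Your proof is correct, but be aware that the paper does not actually prove Lemma~\ref{lemma:groe}: it is quoted from Groetsch \cite{Gr1}, and the technique the paper itself works with appears in Lemma~\ref{lem:groe-aux} and in the proof of Proposition~\ref{satz:conv_iter}. Your route differs from that one precisely in the key quantitative ingredient. Your stages (i) and (iii) are standard and correctly executed (Fej\'er monotonicity $\|v_{k+1}-p\|\le\|v_k-p\|$, and the computation giving $\|(I-T)v_{k+1}\|\le\|(I-T)v_k\|$), but the inequality you invoke in stage (ii), $\|(1-\lambda)u+\lambda w\|^2\le(1-\lambda)\|u\|^2+\lambda\|w\|^2-\lambda(1-\lambda)g_R(\|u-w\|)$ on bounded sets, is not ``the form used by Groetsch'': it is Xu's characterization of uniform convexity (Nonlinear Analysis, 1991), a genuinely true but considerably later and less elementary result, and you rightly identify that the midpoint definition of uniform convexity does not yield it without work. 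Groetsch---and the paper, via Lemma~\ref{lem:groe-aux}---uses instead the multiplicative estimate $\|(1-\lambda)\phi+\lambda\psi\|\le\|\phi\|\bigl(1-2\delta(d^{-1}\eps)\min\{\lambda,1-\lambda\}\bigr)$, valid when $\|\psi\|\le\|\phi\|\le d$ and $\|\phi-\psi\|\ge\eps$, which follows directly from the definition of the modulus of convexity; the argument is then by contradiction: if $\|(I-T)v_k\|\ge\eps$ for all $k$ (by your stage (iii) monotonicity this is the negation of the claim), applying the estimate with $\phi=v_k-p$, $\psi=Tv_k-p$, $\lambda=d_k$ gives $\|v_{k+1}-p\|\le\|v_k-p\|\bigl(1-2\delta(R^{-1}\eps)\min\{d_k,1-d_k\}\bigr)$, and since $\min\{d_k,1-d_k\}\ge d_k(1-d_k)$ the divergence hypothesis forces $\|v_k-p\|\to 0$, hence $\|(I-T)v_k\|\le 2\|v_k-p\|\to 0$, a contradiction. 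So your approach buys a cleaner, Hilbert-space-like telescoping/summability argument at the price of a deeper inequality that you would have to prove or cite; the Groetsch route needs only the elementary Lemma~\ref{lem:groe-aux}. Replacing your stage (ii) by that lemma yields a proof that is self-contained relative to the paper's toolkit, with the rest of your argument unchanged.
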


In order to prove strong convergence of the sequence $\{ x_k \}$, one needs 
stronger assumption on both the set $E$ and the operator $T$ (e.g. $E$ is 
also closed and $T(E)$ is relatively compact in $X$).

Notice that Lemma~\ref{lemma:groe} gives on $\{ x_k \}$ a condition analogous 
to the {\em asymptotic regularity}, which is used in \cite{Op} (this condition 
is also used in \cite{Le} and \cite{EnLe} for the analysis of linear Cauchy 
problems).
%
%
%
%-----------------------------------------------------------------------------
\section{Iterative methods for nonlinear Cauchy problems} \label{sec:it_cp}

\subsection{Analysis of the first approach} \label{ssec:it_cp_1}

The first iterative method proposed in this paper corresponds to the Mann 
iteration applied to the fixed point equation ${\cal T} \vphi = \vphi$
with ${\cal T}$ defined in~\eqref{gl:Tl-def}. 
Initially we address the question of convergence for exact data. Given a 
pair of consistent Cauchy data $(f,g)$ we have the following result:

\begin{theorem}

Let $\cal T$ be the operator defined in~\eqref{gl:Tl-def} and $A$ a 
segmenting matrix with $\sum_{k=1}^{\infty} d_k (1-d_k) = \infty$. For 
every $\vphi_1 \in H^{\me}_{00}(\Gamma_2)'$ the iteration $(\vphi_1, 
A, {\cal T})$ generates sequences $\vphi_k$ and $\phi_k$, which converge 
strongly to $\bar{\vphi}$, the uniquely determined fixed point of $\cal T$.
\end{theorem}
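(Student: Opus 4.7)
The plan is to exploit the affine structure of $\cal T$ identified in the remark following Proposition~\ref{prop:equiv-cp-fpgl} and then to apply the segmenting Mann convergence theorem (Lemma~\ref{lemma:groe}) on the Hilbert space $H^{\me}_{00}(\Gamma_2)'$. Write ${\cal T}\vphi = {\cal T}_l\vphi + z_{f,g}$ with ${\cal T}_l$ linear, let $\phi_k$ denote the averaged sequence produced by the iteration (the analogue of the $v_k$ in Section~\ref{sec:mann}), and set $e_k := \phi_k - \bar{\vphi}$. The recursion \eqref{eq:segment} then becomes the purely linear Krasnoselskii--Mann iteration
\begin{equation*}
  e_{k+1} \ = \ (1-d_k)\,e_k \, + \, d_k\,{\cal T}_l\, e_k ,
\end{equation*}
so it suffices to prove $e_k \to 0$ strongly; strong convergence of $\vphi_k = {\cal T}\phi_{k-1}$ then follows from continuity of $\cal T$.

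The first preparatory step is to verify that ${\cal T}_l$ is nonexpansive on $H^{\me}_{00}(\Gamma_2)'$. Each component operator ${\cal L}_n$ and ${\cal L}_d$ is a trace of the solution map of a well-posed linear mixed boundary value problem for the Laplacian (Section~\ref{ssec:el-gem-prbl-l}). A standard variational comparison of the $H^1(\Omega)$--energies of two solutions differing by a single Cauchy datum yields nonexpansivity of each of ${\cal L}_n$ and ${\cal L}_d$ in the appropriate trace norms; this is precisely the construction analyzed in \cite{EnLe} for the linear Cauchy problem, from which nonexpansivity of the composition ${\cal T}_l$ follows at once.

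With nonexpansivity in hand, using that $\bar\vphi$ is a fixed point of $\cal T$ (Proposition~\ref{prop:equiv-cp-fpgl}) and that $H^{\me}_{00}(\Gamma_2)'$ is uniformly convex as a Hilbert space, the hypotheses of Lemma~\ref{lemma:groe} are satisfied for the iteration $(\vphi_1,A,{\cal T})$ with the divergence assumption on $\sum d_k(1-d_k)$. One obtains
\begin{equation*}
  \|(I-{\cal T})\,\phi_k\|_{H^{\me}_{00}(\Gamma_2)'} \longrightarrow 0,
\end{equation*}
which in error coordinates reads $(I-{\cal T}_l)e_k \to 0$ strongly. In addition, the standard Hilbert-space identity for the linear KM iteration yields
\begin{equation*}
  \|e_{k+1}\|^2 \ \leq \ \|e_k\|^2 \, - \, d_k(1-d_k)\,\|(I-{\cal T}_l)e_k\|^2 ,
\end{equation*}
so $\{\|e_k\|\}$ is monotone non-increasing, with some limit $\alpha \ge 0$.

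The main obstacle is to upgrade asymptotic regularity to strong convergence, i.e.\ to rule out $\alpha>0$. Uniqueness of the fixed point (the corollary following Proposition~\ref{prop:equiv-cp-fpgl}) translates into $\ker(I-{\cal T}_l)=\{0\}$, so any weak cluster point of the bounded sequence $\{e_k\}$ lies in this kernel and must vanish; hence $e_k \rightharpoonup 0$. To turn this into strong convergence I would invoke the refined analysis of the linear Cauchy problem performed in \cite{EnLe}: there the operator ${\cal T}_l$ is shown to be, up to a multiple of the identity, a compact operator on $H^{\me}_{00}(\Gamma_2)'$, the compactness stemming from the smoothing action of the interior Laplace solvers on the boundary traces between $\Gamma_1$ and $\Gamma_2$. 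This compactness promotes $e_k \rightharpoonup 0$ to $e_k \to 0$ strongly, forcing $\alpha = 0$ and completing the argument; the remaining conclusion $\vphi_k \to \bar\vphi$ is then immediate from continuity of $\cal T$.
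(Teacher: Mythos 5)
Your overall strategy --- writing ${\cal T}\vphi={\cal T}_l\vphi+z_{f,g}$, passing to the error recursion $e_{k+1}=(1-d_k)e_k+d_k{\cal T}_l e_k$, getting asymptotic regularity from the segmenting structure and nonexpansivity, and then using ${\rm Ker}(I-{\cal T}_l)=\{0\}$ to conclude $e_k\rightharpoonup 0$ --- runs parallel to the paper, which disposes of all of this by citing \cite[Theorem~6]{EnLe} (the affine analogue of Lemma~\ref{lemma:groe}) together with the injectivity of $I-{\cal T}_l$ from \cite{Le}. The problem is your final step, which is where the actual difficulty of the theorem sits.

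First, the compactness claim is misstated and misattributed. ${\cal T}_l$ itself cannot be compact, and cannot be $cI+K$ with $K$ compact and $c\neq 1$: in either case $I-{\cal T}_l$ would be an injective Fredholm operator of index zero, hence boundedly invertible, and the Cauchy problem would be well posed --- contradicting the exponential ill-posedness that the same section of the paper invokes to justify logarithmic source conditions. So the only admissible reading is ${\cal T}_l=I-K$ with $K$ compact, a property that \cite{EnLe} does not establish in general (what is proved there, and what their convergence proof actually uses, is that ${\cal T}_l$ is self-adjoint, positive, nonexpansive and that $1$ is not an eigenvalue). Second, even granting ${\cal T}_l=I-K$ with $K$ compact and injective, the inference ``compactness promotes $e_k\rightharpoonup 0$ to $e_k\to 0$'' is a non sequitur: weak convergence plus compactness yields strong convergence of $Ke_k=(I-{\cal T}_l)e_k$, which is exactly the asymptotic regularity you already have from Lemma~\ref{lemma:groe}, and it gives no control on $\|e_k\|$ itself; your monotone limit $\alpha>0$ is not excluded by anything you have written. (Compactness of $T(E)$, which is what the remark after Lemma~\ref{lemma:groe} would require, is precisely what fails here.) The step that closes this gap in the literature is spectral: since ${\cal T}_l$ is self-adjoint, nonexpansive and positive with $1$ not an eigenvalue, one writes $e_k=\prod_{j<k}\bigl(I-d_j(I-{\cal T}_l)\bigr)e_1$ and applies the spectral theorem with dominated convergence (a linear regularization argument), which is the content of \cite[Theorem~6]{EnLe} cited in the paper. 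Replacing your compactness paragraph by this argument --- or by the citation itself --- would make the proof complete.
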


\begin{sketch}
Existence and uniqueness of the fixed point $\bar{\vphi}$ follow from 
the assumption that $(f,g)$ are consistent data. Since $(I - {\cal T}_l) 
(\phi_k - \bar{\vphi})  = (I - {\cal T}) \phi_k$ and Ker$(I-{\cal T}_l) = 
\{ 0 \}$ (see \cite{Le}), it is enough to prove that $\lim_k (I - {\cal T}) 
\phi_k = 0$.

This however follows from \cite[Theorem~6]{EnLe}, which is the analog 
of Lemma~\ref{lemma:groe} for affine operators, with nonexpansive linear 
part, defined on Hilbert spaces.
\end{sketch}

Now let us consider the case of inexact data. We assume we are given noisy 
Cauchy data $(f_\eps,g_\eps)$, or alternatively $z_\eps$, such that
$$ \| z_{f,g} - z_\eps \| \leq \eps , $$
where $\eps>0$ is the noise level. This assumption is especially interesting 
in the case where $(f_\eps,g_\eps)$ correspond to measured data. In this 
case we consider the iteration residual and use the generalized {\em 
discrepancy principle} to provide a stopping rule for the algorithm, i.e 
the iteration is stopped at the step $k_\eps$ such that
$$  k_\eps := \min \{ k \in \N ;\ \| z_\eps - (I-{\cal T}_l) 
                                     \vphi_k \| \le \mu\eps \} $$
for some $\mu > 1$ fixed. If we do not make any further (regularity) 
assumption on the solution $\bar{\vphi}$, we cannot prove convergence 
rates for the iterates $\| \vphi_k - \bar{\vphi} \|$. However, it is 
possible to obtain rates of convergence for the residuals, as follows

\begin{prop}

If $\mu >1$ is fixed, the stopping rule defined by the {\em discrepancy 
principle} satisfies $k_\eps = O(\eps^{-2})$.
\end{prop}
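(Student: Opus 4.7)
The plan is to adapt the classical discrepancy-principle analysis for Landweber/Richardson iterations to the present affine nonexpansive setting in Hilbert space. The segmenting Mann iteration can be rewritten as $\varphi_{k+1} = \varphi_k + d_k r_k$, where $r_k := z_\eps - (I - {\cal T}_l)\varphi_k$ is precisely the residual monitored by the stopping rule. Setting $B := I - {\cal T}_l$, $e_k := \varphi_k - \bar\vphi$ and $\delta := z_\eps - z_{f,g}$ with $\|\delta\|\leq\eps$, the linear identity $B e_k = \delta - r_k$ links the error, the residual and the data noise.

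The heart of the argument is a quasi-monotonicity error estimate. Starting from the expansion
$$\|e_{k+1}\|^2 \, = \, \|e_k\|^2 + 2 d_k \langle e_k, r_k\rangle + d_k^2 \|r_k\|^2,$$
substituting $r_k = \delta - B e_k$, and exploiting the monotonicity and boundedness of $B$ (inherited from the nonexpansiveness of ${\cal T}_l$, and in fact reinforced by the firmly nonexpansive/averaging structure of the composition ${\cal L}_d\circ{\cal L}_n$ in \eqref{gl:Tl-def} that underlies Theorem~6 of \cite{EnLe}), one derives an inequality of the form
$$\|e_{k+1}\|^2 - \|e_k\|^2 \, \leq \, -\, c_1\, d_k(1-d_k)\, \|r_k\|^2 \, + \, c_2\, d_k(1-d_k)\, \eps^2,$$
with constants $c_1,c_2>0$ depending only on $\|{\cal T}_l\|$. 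Obtaining this inequality is the principal obstacle: one has to sharpen the constants enough that the descent term dominates the noise term for every $\mu>1$, and not just for $\mu$ above some larger numerical threshold. Here the averaging structure of ${\cal T}_l$ (rather than mere nonexpansiveness) is essential.

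With this estimate in hand, the conclusion is a routine counting argument. While $k<k_\eps$ the discrepancy principle forces $\|r_k\|>\mu\eps$, so the right-hand side is bounded above by $-(c_1\mu^2 - c_2)\, d_k(1-d_k)\, \eps^2$, which is strictly negative once $\mu>1$ (with the tight constants above). Telescoping from $k=1$ to $k=k_\eps - 1$ yields
$$(c_1\mu^2 - c_2)\, \eps^2 \sum_{k=1}^{k_\eps - 1} d_k(1-d_k) \, \leq \, \|\vphi_1 - \bar\vphi\|^2.$$
Because the segmenting coefficients are taken uniformly in a compact subset of $(0,1)$ (so $d_k(1-d_k)\geq\gamma>0$), the left-hand sum grows at least linearly in $k_\eps$, giving the asserted bound $k_\eps = O(\eps^{-2})$.
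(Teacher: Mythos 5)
Your reduction of the segmenting iteration to $\phi_{k+1}=\phi_k+d_k r_k$ with $B:=I-{\cal T}_l$ and $r_k=z_\eps-B\phi_k$ is correct, but the step you yourself flag as ``the principal obstacle'' is not merely hard to sharpen --- it is false for this iteration, and no averaging/firm-nonexpansiveness property of ${\cal T}_l$ can repair it. Expanding the energy identity and using $Be_k=\delta-r_k$ gives $\|e_{k+1}\|^2-\|e_k\|^2 = 2d_k\langle e_k,\delta\rangle - 2d_k\langle e_k,Be_k\rangle + d_k^2\|r_k\|^2$, so the noise enters through $\langle e_k,\delta\rangle$, which can only be bounded by $\|e_k\|\,\eps$. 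In Landweber the update direction is $T^*r_k$, the cross term becomes $\langle Te_k,r_k\rangle$ with $Te_k=\delta-r_k$, and everything is expressed in $r_k$ and $\delta$; here the update direction is $r_k$ itself, and $e_k$ cannot be recovered from $Be_k$ because $B$ has no bounded inverse (this is exactly the ill-posedness). Concretely, take $e_k=v+2\mu\eps\,w$ with $v,w$ (approximate) unit eigenvectors of $B$ for eigenvalues $\lambda\le\eps^2$ and $1/2$, and $\delta=\eps v$: then $\|r_k\|^2=(\eps-\lambda)^2+\mu^2\eps^2>\mu^2\eps^2$ (the rule has not fired), yet $\|e_{k+1}\|^2-\|e_k\|^2\approx 2d_k\eps>0$, which exceeds any bound of the form $-c_1d_k(1-d_k)\|r_k\|^2+c_2d_k(1-d_k)\eps^2=O(\eps^2)$. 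So the per-step error can grow by $O(\eps)$, the claimed quasi-monotonicity fails, and the telescoping collapses. (Even in settings where a noisy-descent inequality of this kind does hold, e.g.\ nonlinear Landweber, it yields the discrepancy principle only for $\mu$ above a threshold of the type $\mu>2$, not for every $\mu>1$.)

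The argument the paper points to (the linear Landweber analysis in \cite[Section~6.1]{EHN}) is structured differently and avoids a noisy descent estimate altogether; you should argue along these lines. With \emph{exact} data, the properties of ${\cal T}_l$ from \cite{EnLe} (self-adjoint, nonnegative, nonexpansive) give $\langle e,Be\rangle\ge\|Be\|^2$, hence $\|e_{k+1}\|^2-\|e_k\|^2\le -d_k(2-d_k)\|Be_k\|^2$; telescoping and the monotone decay of the exact residuals $Be_k=\prod_{j<k}(I-d_jB)Be_1$ yield $\|Be_k\|\le\|e_1\|\bigl(\sum_{j<k}d_j(2-d_j)\bigr)^{-1/2}$. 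With noisy data the residual propagates by the same nonexpansive factors, $r_k^\eps=\prod_{j<k}(I-d_jB)(-Be_1+\delta)$, so $\|r_k^\eps\|\le\|Be_k\|+\eps$. Consequently, as long as the rule has not fired, $(\mu-1)\eps<\|Be_k\|$, and summing gives $(\mu-1)^2\eps^2\sum_{j<k_\eps}d_j(2-d_j)\le\|e_1\|^2$; if the $d_k$ stay in a compact subset of $(0,1)$ this is $k_\eps=O(\eps^{-2})$ for \emph{every} $\mu>1$, because the noise never enters the descent inequality, only the triangle inequality for the residual. Note finally that the lower bound on $d_k$, which you inserted, is genuinely needed and is stronger than the paper's standing assumption $\sum d_k(1-d_k)=\infty$ (for Ces\`aro weights $d_k=1/(k+1)$ the sum $\sum_{j<k}d_j$ grows only logarithmically and no polynomial bound on $k_\eps$ can be obtained), so it deserves to be stated explicitly as a hypothesis.
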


The proof of this result is similar to the one known for the {\em Landweber 
iteration} (see, e.g. \cite[Section~6.1]{EHN}).

If appropriate regularity assumptions are made on the fixed point 
$\bar{\vphi}$, it is possible to obtain convergence rates also for 
the approximate solutions. This additional assumptions are stated 
here in the form of {\em source conditions} (see, e.g., \cite{EHN} or 
\cite{EnSc}). Since linear Cauchy problems are exponentially ill-posed 
(see, e.g., \cite{Le}), the source condition take the form
\begin{equation} \label{gl:source-cond}
\bar{\vphi} - \vphi_1 = f(I - {\cal T}_l) \psi ,
\end{equation}
where $\psi$ is some function in $H^{\me}_{00}(\Gamma_2)'$ and $f$ is 
defined by
$$  f(\lbd) := \left\{ \begin{array}{cl}
                 (\ln(e/\lbd))^{-p}, & \lbd > 0 \\
                                  0, & \lbd = 0
               \end{array} \right. $$
with $p>0$ fixed. This choice of $f$ corresponds to the so-called source 
conditions of logarithmic-type. Under these assumptions we can prove the 
following rates:

\begin{prop}

Let $k_\eps$ be the stopping rule determined by the discrepancy principle 
with $\mu > 2$. If the fixed point $\bar{\vphi}$ satisfies the source 
condition \eqref{gl:source-cond}, then we have

{\it i)} \ $k_\eps = O(\eps^{-1} (-\ln \sqrt{\eps})^{-p} )$;

{\it ii)} \ $\| \bar{\vphi} - \vphi_k \| = O( (-\ln \sqrt{\eps})^{-p})$.
\end{prop}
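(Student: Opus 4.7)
The plan is to reinterpret the segmenting Mann iteration applied to the affine operator $\cal T$ as a Landweber--type scheme for the linear equation $B\vphi=z_{f,g}$, where $B:=I-{\cal T}_l$, and then to apply spectral--calculus techniques standard for logarithmic source conditions. Combining \eqref{eq:segment} with the affine splitting ${\cal T}\vphi={\cal T}_l\vphi+z_\eps$, the iterates satisfy $\vphi_{k+1}=\vphi_k+d_k(z_\eps-B\vphi_k)$, so the residual $r_k:=z_\eps-B\vphi_k$ and the error $e_k:=\bar{\vphi}-\vphi_k$ obey the simple recursions
$$r_{k+1}=(I-d_kB)\,r_k,\qquad e_{k+1}=(I-d_kB)\,e_k-d_k(z_\eps-z_{f,g}),$$
which iterate to spectral expressions in $B$.

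From the analysis in \cite{EnLe} the operator ${\cal T}_l$ is non--expansive, self--adjoint and positive on $H^{\me}_{00}(\Gamma_2)'$, so the functional calculus of $B$ is at our disposal and the source condition \eqref{gl:source-cond} yields
$$r_k=\Big[\textstyle\prod_{j=1}^{k-1}(I-d_jB)\Big]\bigl(Bf(B)\psi+(z_\eps-z_{f,g})\bigr),$$
$$e_k=\Big[\textstyle\prod_{j=1}^{k-1}(I-d_jB)\Big]f(B)\psi-\sum_{j=1}^{k-1}d_j\Big[\textstyle\prod_{i=j+1}^{k-1}(I-d_iB)\Big](z_\eps-z_{f,g}).$$
Writing $\sigma_k:=\sum_{j=1}^{k-1}d_j$, the core of the argument consists of the scalar estimates
$$\sup_{\lbd\in\sigma(B)}\Big|\textstyle\prod_{j<k}(1-d_j\lbd)\,\lbd\,f(\lbd)\Big|\le C_1\,\sigma_k^{-1}(\ln\sigma_k)^{-p},\qquad \sup_{\lbd\in\sigma(B)}\Big|\textstyle\prod_{j<k}(1-d_j\lbd)\,f(\lbd)\Big|\le C_2\,(\ln\sigma_k)^{-p},$$
valid for $\sigma_k$ large enough. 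These follow by dominating $\prod_j(1-d_j\lbd)$ by $\exp(-\sigma_k\lbd)$ on $\sigma(B)\subset[0,1]$ and optimizing the scalar function $\lbd^{\alpha}e^{-\sigma_k\lbd}f(\lbd)$ for $\alpha\in\{0,1\}$, whose maximum lies near $\lbd\sim 1/\sigma_k$.

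Combined with the residual representation, the first estimate gives $\|r_k\|\le C_1\,\sigma_k^{-1}(\ln\sigma_k)^{-p}\|\psi\|+\eps$. Since $\|r_{k_\eps-1}\|>\mu\eps$ by definition of $k_\eps$, this produces $(\mu-1)\eps\le C_1\,\sigma_{k_\eps-1}^{-1}(\ln\sigma_{k_\eps-1})^{-p}\|\psi\|$, and inverting the inequality gives $\sigma_{k_\eps}=O\bigl(\eps^{-1}(-\ln\sqrt\eps)^{-p}\bigr)$. Because the divergence hypothesis $\sum d_k(1-d_k)=\infty$ is enforced through a segmenting sequence with $d_k$ bounded away from both $0$ and $1$, one has $\sigma_k\asymp k$ and conclusion (i) follows. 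The assumption $\mu>2$ is used at this point to guarantee that the $+\eps$ propagated--noise contribution is strictly dominated on the right, so that the stopping index is genuinely driven by the approximation term.

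For (ii) the second scalar bound handles the smooth part of $e_{k_\eps}$, while the propagated--noise part is controlled via the telescoping identity
$$\sum_{j=1}^{k-1}d_j\textstyle\prod_{i=j+1}^{k-1}(1-d_i\lbd)=\frac{1-\prod_{j}(1-d_j\lbd)}{\lbd},$$
whose supremum over $\sigma(B)$ is majorized by $\sigma_k$. Hence $\|e_k\|\le C_2(\ln\sigma_k)^{-p}\|\psi\|+\sigma_k\eps$; substituting $k=k_\eps$ together with the bound on $\sigma_{k_\eps}$ from (i) shows that both contributions are of the common order $(-\ln\sqrt\eps)^{-p}$, which is (ii). The main technical hurdle is the careful derivation of the two logarithmic scalar suprema above; once these are in hand, the balance of approximation error against propagated noise, the discrepancy--principle step, and the final rate all follow along the well--established template for Landweber--type regularization under logarithmic source conditions (cf.\ \cite{EHN,EnSc}).
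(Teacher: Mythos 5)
Your overall route -- rewriting the segmenting Mann step for the affine operator $\cal T$ as the nonstationary Landweber-type recursion $\phi_{k+1}=\phi_k+d_k(z_\eps-B\phi_k)$ with $B=I-{\cal T}_l$, and then running the spectral calculus of the self-adjoint, positive, nonexpansive ${\cal T}_l$ against the logarithmic source condition -- is exactly the template behind the result the paper invokes (the paper gives no proof of its own; it cites \cite[Theorem~15]{EnLe}). Your residual/error representations, the telescoping identity, and the two scalar suprema are correct, and the discrepancy-principle step for (i) is sound. Two smaller points should still be made explicit: the recursion you analyze is for the averaged sequence $\phi_k$, whereas the stopping rule and the assertion of the proposition are formulated for $\vphi_k$ (this transfers because $z_\eps-B\vphi_{k+1}={\cal T}_l(z_\eps-B\phi_k)$ and $\bar{\vphi}-\vphi_{k+1}={\cal T}_l(\bar{\vphi}-\phi_k)-(z_\eps-z_{f,g})$-type identities keep the same bounds, but it has to be said); and the passage from $\sigma_{k_\eps}$ to $k_\eps$ needs $d_k$ bounded away from $0$, which is an extra hypothesis -- $\sum_k d_k(1-d_k)=\infty$ does not force it (take $d_k=1/k$).

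The genuine gap is in part (ii). You bound $\|e_{k_\eps}\|\le C_2(\ln\sigma_{k_\eps})^{-p}\|\psi\|+\sigma_{k_\eps}\,\eps$ and then ``substitute the bound on $\sigma_{k_\eps}$ from (i)''. That works for the noise term, but for the approximation term the inequality goes the wrong way: $(\ln\sigma_{k_\eps})^{-p}=O((-\ln\sqrt{\eps})^{-p})$ requires a \emph{lower} bound $\sigma_{k_\eps}\gtrsim \eps^{-\alpha}$, and no such bound is available -- the discrepancy principle may stop at $k_\eps=1$ (e.g. if the initial residual is already below $\mu\eps$), in which case your estimate only gives $O(1)$. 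The standard repair, which is the actual mechanism in \cite[Theorem~15]{EnLe} and in \cite{EnSc}, is to exploit the stopping index directly: at $k=k_\eps$ one has $\|B(\bar{\vphi}-\phi_{k_\eps})\|\le\|z_\eps-B\phi_{k_\eps}\|+\eps\le(\mu+1)\eps$, and since $\|Be_{k_\eps}^{noise}\|=\|(I-\prod_j(I-d_jB))(z_\eps-z_{f,g})\|\le\eps$, the smooth part satisfies $\|Be_{k_\eps}^{app}\|\le(\mu+2)\eps$ while $e_{k_\eps}^{app}=f(B)\big(\prod_j(I-d_jB)\psi\big)$ is again a source-type element with source norm $\le\|\psi\|$. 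One then applies the interpolation (conditional stability) inequality for logarithmic source conditions, $\|w\|\le C\,\rho\,\big(\ln(\rho/\|Bw\|)\big)^{-p}$ for $w=f(B)v$, $\|v\|\le\rho$, which yields $\|e_{k_\eps}^{app}\|=O((-\ln\sqrt{\eps})^{-p})$ independently of how small $k_\eps$ is; it is in this step (and in the constants of the resulting balance), not in your inequality $(\mu-1)\eps\le\dots$, that the hypothesis $\mu>2$ is genuinely used. Without this interpolation argument, conclusion (ii) does not follow from your estimates.
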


This result is a consequence of \cite[Theorem~15]{EnLe} (see also 
\cite{EnSc} for the non linear Landweber iteration). The interest in 
the source conditions of logarithmic-type for this fixed point equation 
is motivated by the fact that it can be interpreted in the sense of 
$H^s$ regularity. Indeed, it is shown in \cite{EnLe} that condition 
\eqref{gl:source-cond} is equivalent to assume, in a special case 
where the spectral decomposition of the operator ${\cal T}_l$ is known, 
that $\bar{\vphi} - \vphi_1$ is in the Sobolev space $H^p$.

\subsection{Analysis of the second approach} \label{ssec:it_cp_2}

Let $T$ be the operator defined in \eqref{gl:T_def}. We assume the 
Cauchy data $(f,g)$ to be consistent and denote by $\bar{\vphi}$ the 
fixed point of $T$. We start by defining the operator
\begin{equation} \label{gl:barT_def}
\overline{T} \vphi \ := \ 
\begin{cases}
  {T} \vphi &\!\!\! , \| {T} \vphi \| \le \|\vphi\| \\
  \frac{\|\vphi\|}{\| {T} \vphi \|} {T} \vphi
                 &\!\!\! , \| {T} \vphi \| \ge \|\vphi\|
\end{cases}
\end{equation}
The operator $\overline{T}$ is continuous, nonlinear and further satisfies 
$ \| \overline{T} \vphi \| \le \|\vphi\|$, for all $\vphi \in 
H^{\me}_{00}(\Gamma_2)'$. As one can easily check, every fixed point of 
$T$ is also a fixed point of $\overline{T}$. The reciprocal, however, is 
not true. What we can prove is the following:

\begin{prop} An element $\vphi \in H^{\me}_{00}(\Gamma_2)'$ is a fixed 
point of $T$ iff it is a fixed point of $\overline{T}$ and 
$\| {T} \vphi \| \le \| \vphi \|$.
\end{prop}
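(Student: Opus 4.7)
The plan is to establish both directions of the equivalence by a direct case analysis based on the definition of $\overline{T}$, without needing any deep properties of $T$ beyond those which already follow from the construction.

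For the forward implication, I would start from the assumption $T\vphi = \vphi$. This immediately yields $\|T\vphi\| = \|\vphi\|$, so the norm condition $\|T\vphi\| \le \|\vphi\|$ is satisfied (the equality case lies in the overlap of both branches of \eqref{gl:barT_def}). Feeding this back into \eqref{gl:barT_def}, either branch evaluates to $T\vphi$ (the second branch collapsing to $\frac{\|\vphi\|}{\|\vphi\|}T\vphi = T\vphi$), so $\overline{T}\vphi = T\vphi = \vphi$, giving the fixed point property for $\overline{T}$.

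For the reverse implication, I would assume $\overline{T}\vphi = \vphi$ together with $\|T\vphi\| \le \|\vphi\|$. Because the norm inequality puts us in the first branch of \eqref{gl:barT_def}, one has $\overline{T}\vphi = T\vphi$ directly from the definition; combining this with $\overline{T}\vphi = \vphi$ gives $T\vphi = \vphi$. This completes the equivalence.

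There is no real obstacle here: the statement is essentially a bookkeeping consequence of how the normalization in \eqref{gl:barT_def} is designed, and the only subtlety worth a comment in the written proof is the overlap at $\|T\vphi\| = \|\vphi\|$, which is precisely what guarantees the two branches are consistent on the equality locus and hence that the forward direction goes through regardless of which branch is formally invoked. No properties of $T$ beyond being well-defined on $H^{\me}_{00}(\Gamma_2)'$ are needed for the proposition itself; consistency of the Cauchy data and the identification of the unique fixed point $\bar\vphi$ will only enter later when this lemma is used to transfer convergence analysis from $\overline{T}$ back to $T$.
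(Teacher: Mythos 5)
Your proof is correct: both directions follow from a direct case check on the definition \eqref{gl:barT_def}, which is exactly the routine verification the paper has in mind (it states the proposition without proof, treating it as an easy consequence of the construction of $\overline{T}$). The only micro-remark is that in the equality case $\|T\vphi\|=\|\vphi\|=0$ the second branch is formally $0/0$, but the first branch covers it, so your argument stands as written.
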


Next we introduce an iterative method (based on the Mann iteration) 
to approximate the fixed points of $\overline{T}$. Given a 
segmenting matrix $A$, consider the algorithm
\begin{tt}
\begin{itemize} \itemsep0.1ex
\item[1.] Choose $\vphi_1 \in H^{\me}_{00}(\Gamma_2)'$;
\item[2.] For $k = 1, 2, \ldots$ do \\[0.5ex]
      \mbox{\ \ } $\phi_k := \sum_{j=1}^k \, a_{kj} \, \vphi_j$; \\[1ex]
      \mbox{\ \ } $\vphi_{k+1} := \overline{T}(\phi_k)$;
\end{itemize}
\end{tt}
Notice that $\vphi_k, \phi_k \in H^{\me}_{00}(\Gamma_2)'$, $k = 1,2,\dots$. 
We represent this iterative process by $(\vphi_1, A, \overline{T})$. 
Obviously this iteration coincides with the Picard iteration if one chooses 
$A = I$.%
\footnote{In the linear case the choice $A = I$ corresponds to the Maz'ya 
iteration; see \cite{EnLe}.}

Now we discuss an auxiliary lemma concerning uniformly convex spaces.%
\footnote{See \cite{Ad} for the corresponding definitions.}

\begin{lemma} \label{lem:groe-aux}

Let $X$ be an uniformly convex linear space with modulus of convexity 
$\delta$. Further let $\eps > 0$, $d > 0$ and $\lbd \in [0,1]$ be given. 
If $\phi, \psi \in X$ are such that
$$ \| \psi \| \ \le \ \| \phi \| \ \le \ d \ \ \mbox{ and } \ \ 
   \| \phi - \psi \| \ \ge \ \eps , $$
then
$$ \| (1-\lbd) \phi + \lbd \psi \| \ \le \ \| \phi \| \,
   \big( 1 - 2\delta(d^{-1}\eps) \, \min\{ \lbd, 1-\lbd\} \big) . $$
\end{lemma}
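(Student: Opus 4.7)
The plan is to reduce the estimate to the defining property of the modulus of convexity applied to a rescaled pair of vectors, after splitting off a suitable pure-translation piece of the convex combination. Recall that $\delta$ is characterized by the requirement that whenever $\|x\|,\|y\|\le 1$ and $\|x-y\|\ge\eta$, one has $\|\tfrac{1}{2}(x+y)\|\le 1-\delta(\eta)$, and that $\delta$ is nondecreasing.

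First I would treat the case $\lambda\le\tfrac{1}{2}$, so $\min\{\lambda,1-\lambda\}=\lambda$. The key algebraic identity is
\[
(1-\lambda)\phi+\lambda\psi \;=\; (1-2\lambda)\phi \;+\; 2\lambda\cdot\tfrac{1}{2}(\phi+\psi),
\]
which is a convex combination since $1-2\lambda\ge 0$. Applying the triangle inequality gives
\[
\|(1-\lambda)\phi+\lambda\psi\| \;\le\; (1-2\lambda)\|\phi\| + 2\lambda\,\|\tfrac{1}{2}(\phi+\psi)\|.
\]
To estimate the midpoint, set $r:=\|\phi\|$ and rescale: put $x:=\phi/r$, $y:=\psi/r$. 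By hypothesis $\|x\|=1$, $\|y\|=\|\psi\|/r\le 1$, and $\|x-y\|=\|\phi-\psi\|/r\ge \eps/r$. The definition of $\delta$ then gives $\|\tfrac{1}{2}(x+y)\|\le 1-\delta(\eps/r)$, hence
\[
\|\tfrac{1}{2}(\phi+\psi)\| \;\le\; \|\phi\|\bigl(1-\delta(\eps/\|\phi\|)\bigr) \;\le\; \|\phi\|\bigl(1-\delta(\eps/d)\bigr),
\]
using $\|\phi\|\le d$ together with monotonicity of $\delta$. Substituting, the $(1-2\lambda)\|\phi\|$ and $2\lambda\|\phi\|$ terms combine to $\|\phi\|$ and leave a correction of $-2\lambda\delta(\eps/d)\|\phi\|$, which is exactly the claimed bound when $\min\{\lambda,1-\lambda\}=\lambda$.

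For the case $\lambda>\tfrac{1}{2}$, the symmetric identity
\[
(1-\lambda)\phi+\lambda\psi \;=\; (2\lambda-1)\psi \;+\; 2(1-\lambda)\cdot\tfrac{1}{2}(\phi+\psi)
\]
is again a convex combination (with $2\lambda-1\ge 0$). The same triangle-inequality/rescaling argument applies, this time using $\|\psi\|\le\|\phi\|\le d$ in the first term, and yields the bound $\|\phi\|\bigl(1-2(1-\lambda)\delta(\eps/d)\bigr)$; since now $\min\{\lambda,1-\lambda\}=1-\lambda$, the two cases assemble into the single stated inequality.

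I do not anticipate a real obstacle: the only thing to be careful about is that after rescaling by $r=\|\phi\|$, the argument of $\delta$ becomes $\eps/r$, which is in general \emph{larger} than $\eps/d$, so monotonicity of $\delta$ is used in the correct direction to produce a uniform bound in terms of $\delta(d^{-1}\eps)$. The rest is just bookkeeping of the convex-combination coefficients.
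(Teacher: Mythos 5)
Your proof is correct. Note that the paper gives no argument for this lemma at all---its ``proof'' is just the citation of Groetsch \cite{Gr2}---so your write-up in effect supplies the omitted proof, and it does so by the standard Groetsch-type argument: the decomposition $(1-\lambda)\phi+\lambda\psi=(1-2\lambda)\phi+2\lambda\cdot\tfrac12(\phi+\psi)$ for $\lambda\le\tfrac12$ (and its mirror image built on $\psi$ for $\lambda>\tfrac12$), the triangle inequality, uniform convexity applied to the midpoint after rescaling by $\|\phi\|$, and monotonicity of $\delta$ to pass from $\delta(\varepsilon/\|\phi\|)$ to $\delta(\varepsilon/d)$; the bookkeeping in both cases assembles correctly into the stated bound with $\min\{\lambda,1-\lambda\}$. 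Two small points are worth recording explicitly: the rescaling requires $\|\phi\|>0$, which follows from $\varepsilon\le\|\phi-\psi\|\le\|\phi\|+\|\psi\|\le 2\|\phi\|$; and you are using the modulus of convexity characterized over the closed unit ball ($\|x\|,\|y\|\le 1$, not $\|x\|=\|y\|=1$), which is precisely the form of uniform convexity used in \cite{Gr2}, so the appeal to the defining property of $\delta$ is legitimate as stated.
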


\begin{proof} See \cite{Gr2}.
\end{proof}

Next we prove for our iterative method, a result analogous to the one 
stated Lemma~\ref{lemma:groe}. Notice that, different to that lemma, 
we do not assume nonexpansivity. Instead we use the weaker property 
$\| \overline{T} \vphi \| \le \|\vphi\|$ of $\overline{T}$.

\begin{prop} \label{satz:conv_iter}

Let $\overline{T}$ be the operator defined in \eqref{gl:barT_def} 
and $A$ a segmenting matrix such that $\sum_{k=1}^\infty d_k (1-d_k)$ 
diverges. Further let $\vphi_1 \in H^{\me}_{00}(\Gamma_2)'$ and 
$\{ \phi_k \}$ be the sequence generated by the iteration 
$(\vphi_1, A, \overline{T})$. Then, there exists a subsequence 
$\{ \phi_{k_j} \}$ such that $\{ (I-\overline{T}) \phi_{k_j} \}$ 
converges (strongly) to zero.
\end{prop}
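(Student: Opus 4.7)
The plan is to mimic Groetsch's proof (Lemma \ref{lemma:groe}) but exploit the weaker property $\|\overline{T}\vphi\|\le\|\vphi\|$ in place of nonexpansivity, via Lemma \ref{lem:groe-aux}. The space $X=H^{\me}_{00}(\Gamma_2)'$ is a Hilbert space and thus uniformly convex, so its modulus of convexity $\delta$ is available; I will use this together with the segmenting representation $\phi_{k+1}=(1-d_k)\phi_k+d_k\vphi_{k+1}$, where $\vphi_{k+1}=\overline T\phi_k$.

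First I would show that $\{\|\phi_k\|\}$ is monotonically non-increasing. Indeed, $\|\vphi_{k+1}\|=\|\overline T\phi_k\|\le\|\phi_k\|$, and the convex combination gives
$$\|\phi_{k+1}\|\le (1-d_k)\|\phi_k\|+d_k\|\vphi_{k+1}\|\le\|\phi_k\|.$$
Hence $\|\phi_k\|\le d:=\|\phi_1\|$ for every $k$, and $\|\phi_k\|\to \ell\ge 0$.

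Next, I would argue by contradiction. Suppose no subsequence of $\{(I-\overline T)\phi_k\}$ converges to zero; then there exist $\eps>0$ and $k_0\in\N$ such that $\|(I-\overline T)\phi_k\|=\|\phi_k-\vphi_{k+1}\|\ge\eps$ for all $k\ge k_0$. Apply Lemma \ref{lem:groe-aux} with $\phi=\phi_k$, $\psi=\vphi_{k+1}=\overline T\phi_k$ and $\lbd=d_k$: the hypotheses $\|\psi\|\le\|\phi\|\le d$ and $\|\phi-\psi\|\ge\eps$ are satisfied, so
$$\|\phi_{k+1}\|\le\|\phi_k\|\bigl(1-2\delta(d^{-1}\eps)\min\{d_k,1-d_k\}\bigr).$$
Since $\min\{d_k,1-d_k\}\ge d_k(1-d_k)$ and $\log(1-x)\le -x$ for $x\in[0,1)$, taking logarithms and telescoping from $k_0$ to $k$ yields
$$\log\|\phi_{k+1}\|\le\log\|\phi_{k_0}\|-2\delta(d^{-1}\eps)\sum_{j=k_0}^{k}d_j(1-d_j).$$

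The divergence hypothesis $\sum d_k(1-d_k)=\infty$ then forces $\|\phi_k\|\to 0$, and consequently $\|(I-\overline T)\phi_k\|\le\|\phi_k\|+\|\overline T\phi_k\|\le 2\|\phi_k\|\to 0$, contradicting $\|(I-\overline T)\phi_k\|\ge\eps$. This establishes $\liminf_k\|(I-\overline T)\phi_k\|=0$, which is precisely the existence of the desired subsequence. The main obstacle I anticipate is a bookkeeping issue rather than a conceptual one: one must verify that $\delta(d^{-1}\eps)>0$ (which follows from uniform convexity provided $\eps>0$ and $d<\infty$, both of which are guaranteed) and the elementary inequality $\min\{d_k,1-d_k\}\ge d_k(1-d_k)$, which is what allows the segmenting-matrix hypothesis to be used verbatim.
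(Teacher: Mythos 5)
Your proof is correct and takes essentially the same route as the paper's: the same contradiction argument based on Lemma~\ref{lem:groe-aux}, the segmenting representation $\phi_{k+1}=(1-d_k)\phi_k+d_k\overline{T}\phi_k$, the bound $\|\overline{T}\vphi\|\le\|\vphi\|$ in place of nonexpansivity, and the inequality $\min\{d_k,1-d_k\}\ge d_k(1-d_k)$ to exploit the divergence hypothesis. The only cosmetic difference is that you telescope logarithms starting from an index $k_0$, whereas the paper bounds $\|\phi_{k+1}\|$ by a product over all $j\le k$; both yield $\|\phi_k\|\to 0$ and the same contradiction.
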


\begin{proof}
Given $\vphi_1 \in H^{\me}_{00}(\Gamma_2)'$, let $\{ \vphi_k \}$ and 
$\{ \phi_k \}$ be the sequences generated by the iteration $(\vphi_1, A, 
\overline{T})$. Setting $\psi_k := \vphi_{k+1}$, for $k \ge 0$, it 
follows from the definition of $\overline{T}$ and from the segmenting 
property \eqref{eq:segment} that
\begin{equation} \label{gl:satz-conv-1}
\| \psi_k \| \ \le \ \| \phi_k \| \ \le \ \| \vphi_1 \|\, ,\ k=1,2,\dots
\end{equation}
and
\begin{equation} \label{gl:satz-conv-2}
\phi_{k+1} \ = \ (1 - d_k)\, \phi_k + d_k\, \psi_k\, ,\ k=1,2,\dots\, .
\end{equation}
Since $(I-\overline{T}) \phi_k = \phi_k - \psi_k$, $k \ge 1$, it 
is enough to prove that $0 \in$ cl$\{\phi_k - \psi_k\}$, where cl\,$\{M\}$ 
denotes the (strong) closure of the set $M$ in $H^{\me}_{00}(\Gamma_2)'$.

Now, suppose there exists $\eps > 0$ such that
\begin{equation} \label{gl:satz-conv-3}
\|\phi_k - \psi_k\| \ge \eps\, ,\ k=1,2,\dots\, .
\end{equation}
Thus, it follows from Lemma~\ref{lem:groe-aux}, together with 
\eqref{gl:satz-conv-1} and \eqref{gl:satz-conv-2}
\begin{eqnarray*}
\| \phi_{k+1} \| &  =  & \| (1 - d_k)\, \phi_k + d_k\, \psi_k \| \\
                 & \le & \| \phi_k \|\, \big( 1 - 2\delta(\|\vphi_1\|^{-1}\eps)
                         \, \min\{ d_k, 1-d_k \} \big) .
\end{eqnarray*}
Repeating inductively the argumentation we obtain
\begin{equation} \label{gl:satz-conv-4}
 \| \phi_{k+1} \| \ \le \ \| \phi_1 \| \, \prod_{j=1}^k
   \big( 1 - 2\delta(\|\vphi_1\|^{-1}\eps) \, \min\{ d_j, 1-d_j \} \big) .
\end{equation}
However, since $d_j(1-d_j) \le \min\{ d_j, 1-d_j \}$, for $j \ge 1$, it 
follows from the assumption on the series $\sum d_j (1-d_j)$ that
$$ \sum_{j=1}^\infty \, \min\{ d_j, 1-d_j \} \ = \ \infty . $$
Consequently, the product on the right hand side of \eqref{gl:satz-conv-4} 
converges to zero and hence $\lim \| \phi_k \| = \lim \| \psi_k \| = 0$. 
However, this contradicts \eqref{gl:satz-conv-3}, completing the proof.
\end{proof}

Notice that the assumption $\sum_{k=1}^\infty d_k (1-d_k) = \infty$ in 
Theorem~\ref{satz:conv_iter} can be replaced by the requirement 
$\sum_{j=1}^\infty \min\{ d_j, 1-d_j \} = \infty$. The proof remains 
the same.

\begin{remark}

In \cite[Lemma~2]{Gr2}, C.\;Groetsch obtains an analogous result for an 
iterative process introduced by W.\;Kirk (see \cite{Ki}). In that paper, 
fixed points of a nonexpansive operator $T$ are approximated by a sequence 
of the type $x_{n+1} = S_n x_n$, where the operators $S_n$ are defined by
$$ S_n \ := \ \alpha_{n0}I + \alpha_{n1}T + \cdots + \alpha_{nk}T^k\, ,
   \ n = 1,2,\dots\, , $$
with $\alpha_{ij} \ge 0$, $\alpha_{n1} \ge \alpha > 0$, 
$\sum\limits_{j=1}^k \alpha_{nj} = 1$, $j=1,\dots,k$ and 
$\sum\limits_{n=1}^\infty \min\{ \alpha_{n0}, 1-\alpha_{n0} \} = \infty$.

One should notice that in \cite[Lemma~2]{Gr2} as well as in Lemma~%
\ref{lemma:groe}, one needs for the proof the fact that the operator 
$T$ is nonexpansive. However, we have only used the property 
$\| \overline{T} \vphi\| \le \| \vphi \|$, for all $\vphi$.
\end{remark}

\begin{remark}

If for a particular coefficient function $q$ the operator 
$\overline{T}$ happens to be nonexpansive, then one can argue 
as in \cite[Corollary~7]{EnLe} to prove that, under the same 
assumptions of Proposition~\ref{satz:conv_iter}, the sequence 
$\{ \phi_k \}$ generated by the iteration $(\vphi_1, A, \overline{T})$ 
converges (strongly) to a fixed point of $\overline{T}$ for 
every $\vphi_1 \in H^{\me}_{00}(\Gamma_2)'$.
\end{remark}
%
%
%
%-----------------------------------------------------------------------------
\section{Numerical experiments} \label{sec:numeric}

In the next paragraphs we present some numerical results, which correspont to 
the implementation of the iterative method proposed in Section~%
\ref{ssec:it_cp_2}. The first two examples concern consistent Cauchy problems 
(exact data) in a square domain. In the third example we consider a problem 
with noisy data.

The computation was performed on the Silicon Graphics SGI-machines (based 
on R12000 processors; 32-bit code) at the Spezialforschungsbereich F013. 
The elliptic mixed boundary value problems were solved using the NETLIB 
software package PLTMG (see \cite{Ba}).

\subsection{A problem with harmonic solution} \label{ssec:num-ex1}

Let $\Omega \subset \R^2$ be the open rectangle $(0,1) \times (0,1/2)$ and 
define the following subsets of $\partial\Omega$:
$$         \Gamma_1 := \{ (x,0) ; x \in (0,1) \}\, , \ \ \ \ \ \ 
           \Gamma_2 := \{ (x, 1/2) ; x \in (0,1) \}\, , $$
$$        \Gamma_3 := \{ (0,y) ; y \in (0,1/2) \}\, , \ \ \ \,
          \Gamma_4 := \{ (1,y) ; y \in (0,1/2) \} . $$
Let $q(t) = 1 + t^2$ and $\bar{u}: \bar{\Omega} \to \R$ be the harmonic 
function
$$ \bar{u}(x,y) \ := \ x^2 - y^2 + 5x + 2y - 3xy . $$
We consider the Cauchy problem
$$ \left\{ \begin{array}{rcl}
     -\nabla \cdot (q(u) \nabla u) & = & h \, , \mbox{ in } \Omega \\
     u           & = & f       \, , \mbox{ at } \Gamma_1 \\
     q(u)u_{\nu} & = & g       \, , \mbox{ at } \Gamma_1 \\
     u           & = & \bar{u} \, , \mbox{ at } \Gamma_3 \cup \Gamma_4
   \end{array} \right.  $$
where the Cauchy data
$$ f(x) \ = \ x^2 + 5x \, ,\ \ \ 
   g(x) \ = \ (1 + (x^2 + 5x)^2) \, (3x - 2) $$
is given at $\Gamma_1$ and the right hand side $h : \Omega \to \R$ is given by
$$ h(x,y) \ = \ - 2 \bar{u}(x,y) \, | \nabla \bar{u}(x,y) |^2 . $$
We aim to reconstruct the (Dirichlet) trace of $u$ at $\Gamma_2$. The problem 
was artificially constructed and one can easily check that the desired trace 
is given by
$$ \bar{\vphi} \ = \ \T x^2 + \frac{7}{2}x + \frac{3}{4} \ 
                (= \, \bar{u}_{|_{\Gamma_2}}) . $$

We used in the iteration the Ces\`aro matrix
$$ A \, = \, \left( \begin{array}{cccccc}
     1   & 0    & 0      & \cdots & 0      & \cdots \\
     1/2 & 1/2  & 0      & \cdots & 0      & \cdots \\
     1/3 & 1/3  & 1/3    & \cdots & 0      & \cdots \\
     \vdots     & \vdots & \vdots & \ddots & \vdots & \vdots
   \end{array} \right) . $$
As initial guess, $\vphi_1 \equiv 0$ was chosen (at the end points $x=0$ and 
$x=1$, $\vphi_1$ must take the values 3/4 and 21/4 respectively, in order to 
be compatible with the other boundary conditions). Each mixed boundary value 
problem was solved using a (multi-grid) finite element method, with linear 
elements and a uniform mesh with 1\,073 nodes (33 nodes on $\Gamma_2$).

\begin{figure}[t] \unitlength1cm
\begin{center}
\begin{picture}(14,8.5)
%\put(0,0){\dashbox{0.1}(14,8.5){}}
\put(0,4.5){\centerline{
\epsfxsize4cm\epsfysize4cm \epsfbox{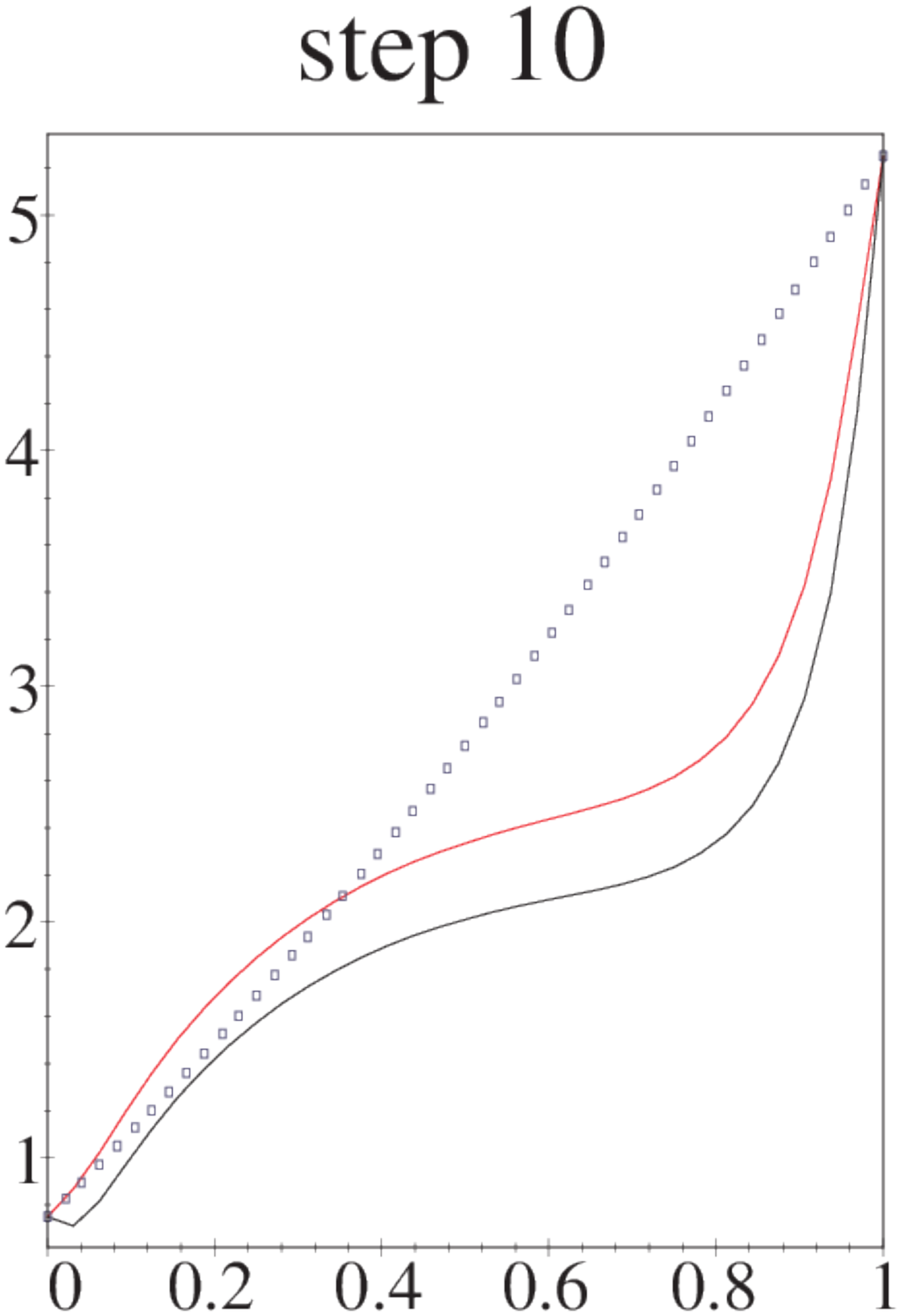} \hskip0.7cm
\epsfxsize4cm\epsfysize4cm \epsfbox{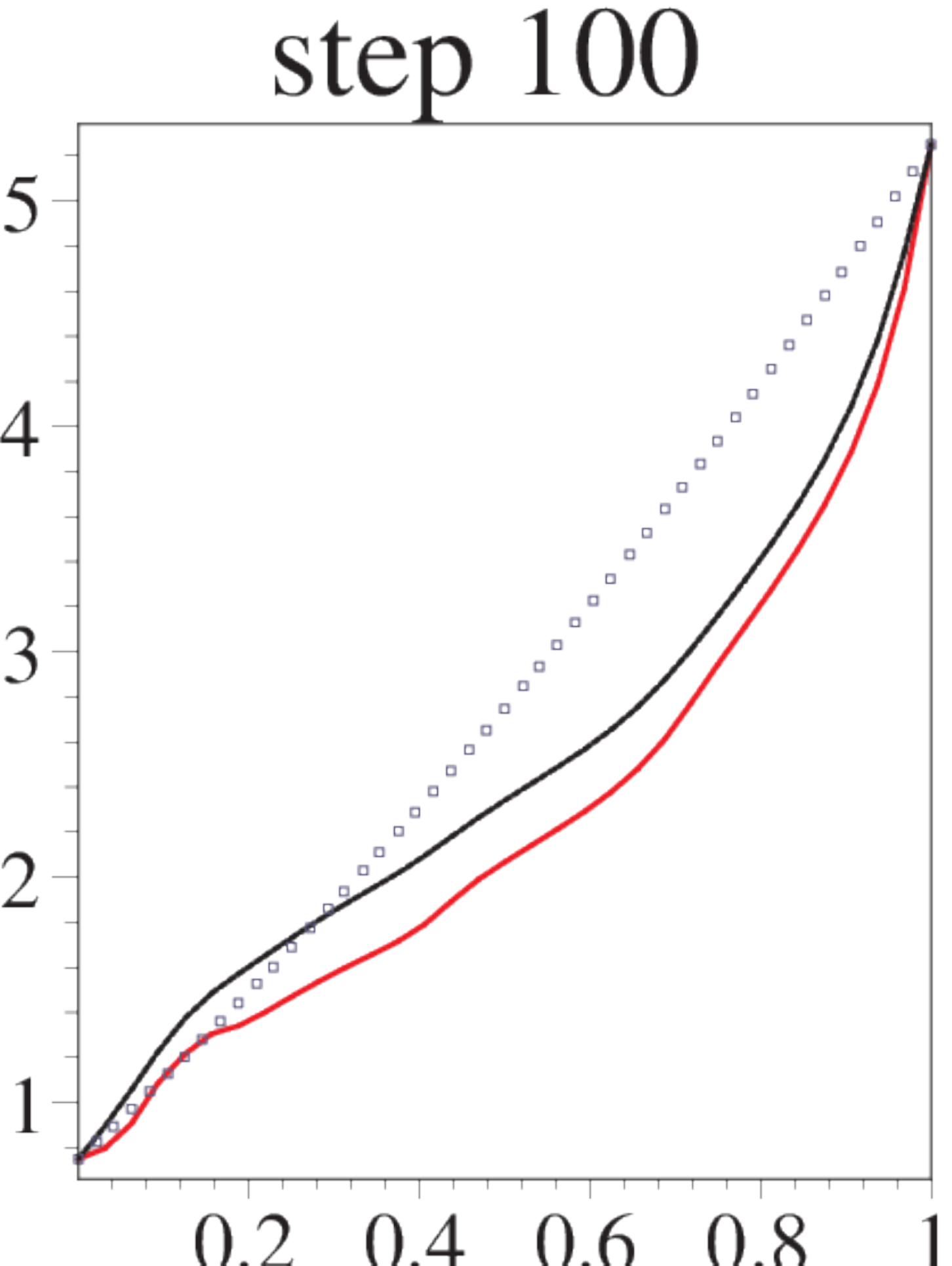} \hskip0.7cm
\epsfxsize4cm\epsfysize4cm \epsfbox{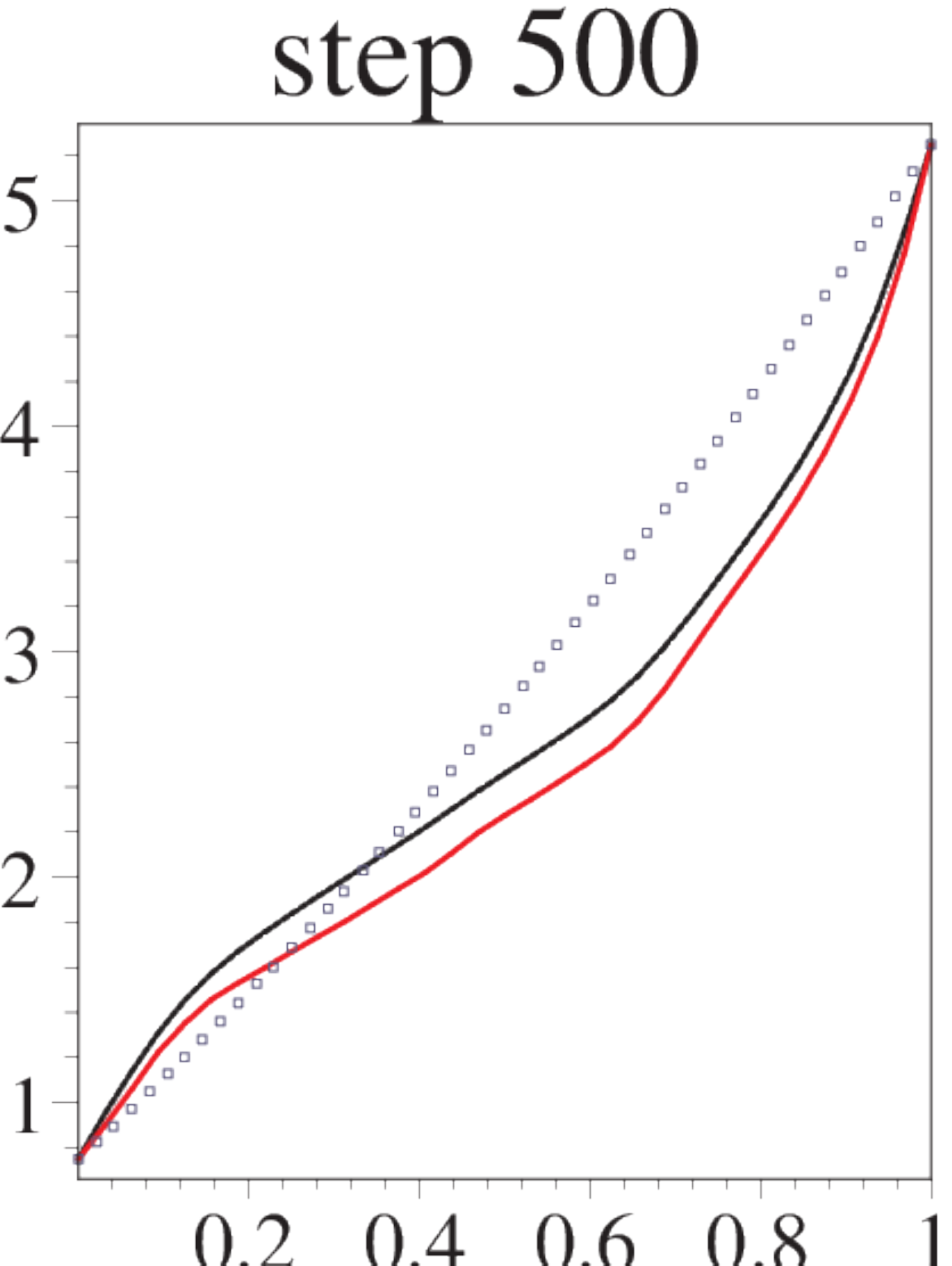} } }
\put(0,0){\centerline{
\epsfxsize4cm\epsfysize4cm \epsfbox{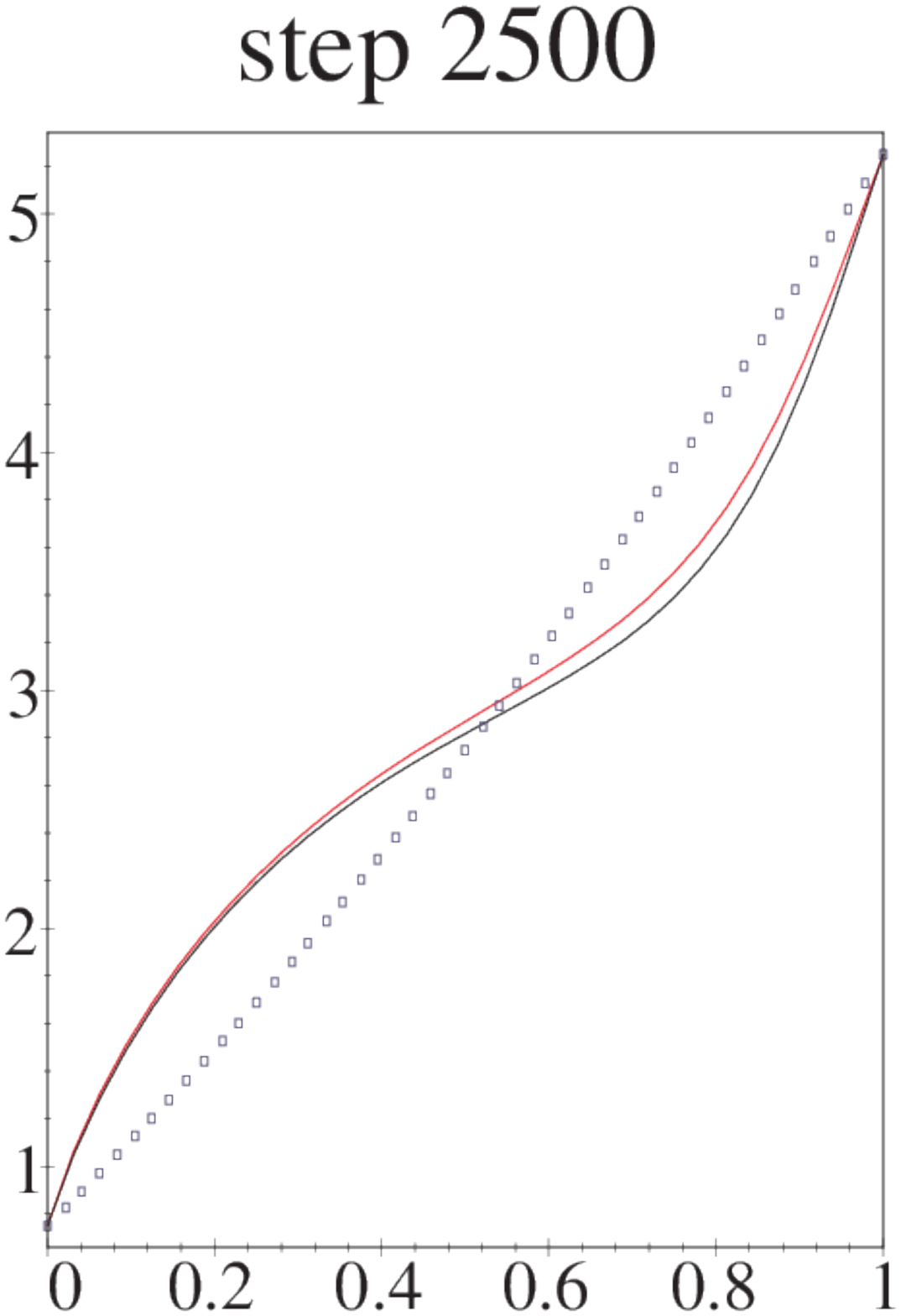} \hskip1cm
\epsfxsize4cm\epsfysize4cm \epsfbox{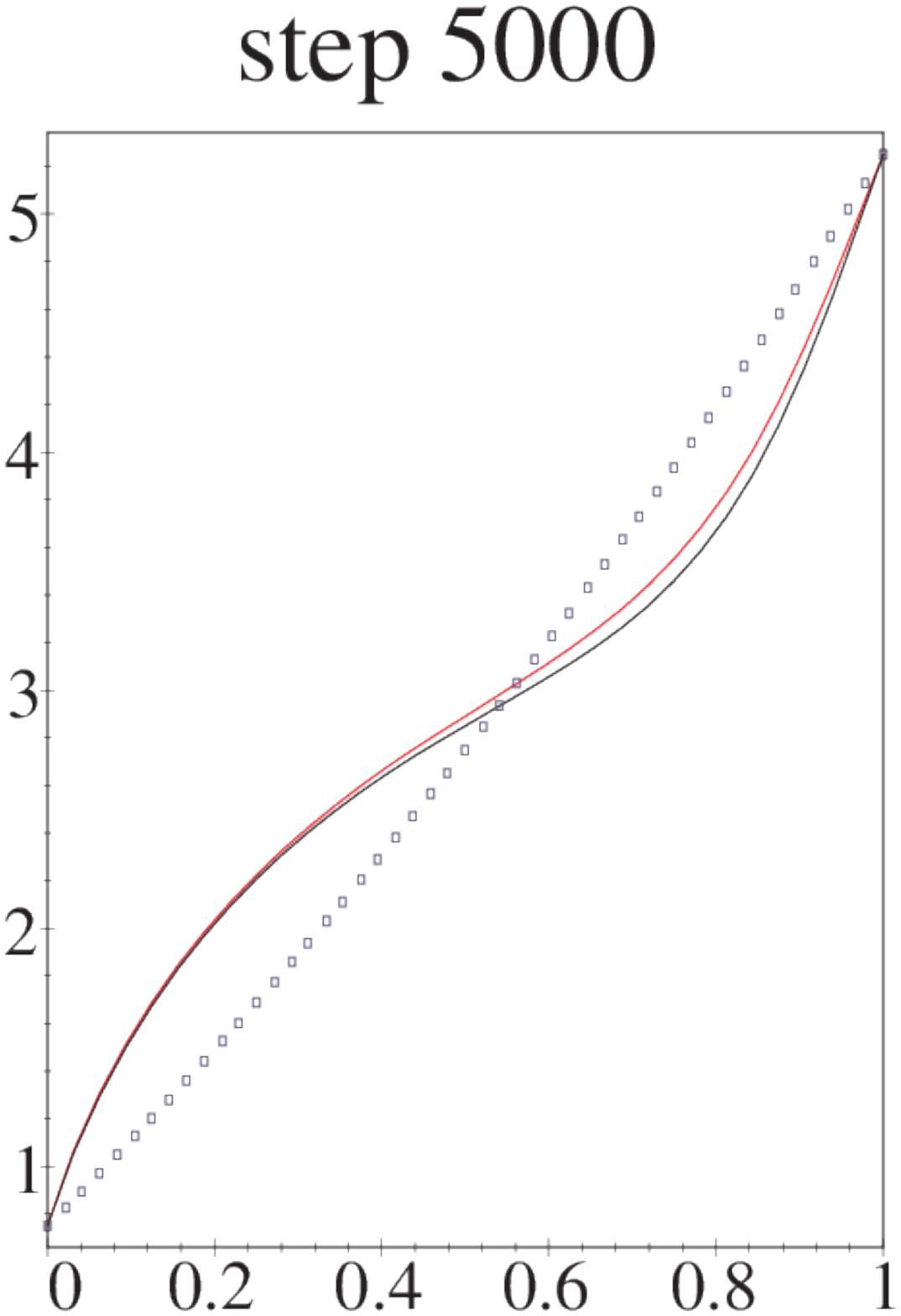} } }
\end{picture}
\end{center} \vskip-0.6cm
\caption{Iteration for a Cauchy problem with harmonic solution 
\label{fig:ex1-f1}}
\end{figure}

We used the stopping rule $\| \vphi_k - \vphi_{k-1} \|_{L^2} \le 10^{-2}$ 
(the same used in \cite{EnLe} for the linear case). In Figure~%
\ref{fig:ex1-f1} we present the results corresponding to the Mann iteration 
for the operator $\bar{T}$. The dotted (blue) line represents the exact 
solution, the dashed (black) line represents the sequence $\vphi_k$ and the 
solid (red) line represents the sequence $\psi_k$, both generated by 
$(0,A,\bar{T})$.

>From the results in Figure~\ref{fig:ex1-f1}, one can conclude that the 
convergence rate decays very fast with the iteration. This can be in part 
explained by the linear convex combination used to compute $\psi_k$ in the 
Mann iteration (note that\, $\psi_{k+1} - \psi_k = \frac{1}{k+1} \vphi_{k+1} 
- \frac{1}{k(k+1)} \sum_{j=1}^k \vphi_k$). We consider the following 
alternative to improve the convergence rate:
\begin{itemize} \itemsep0.2ex
\item Choose $\eps' > 0$ larger then the precision ($\eps > 0$) to be achieved;
\item Use the stopping rule $\|\psi_k-\psi_{k-1}\|_{L^2} \le \eps'$;
\item Restart the iteration with $\vphi_1 = \psi_k$;
\item The iteration should be renewed restarted until $\| T\vphi_k - 
\vphi_k \|_{L^2} \le \eps$.
\end{itemize}
Notice that we restart the iteration every time the mean value $\psi_k$ stops 
changing significantly. The restart procedure should be repeated until 
$\vphi_k$ (or alternatively $\psi_k$) approximates the fixed point of 
$\overline{T}$ with the desired precision. In Figure~\ref{fig:ex1-f2} 
we present the results corresponding to this restart strategy (the meaning 
of the curves is the same as in Figure~\ref{fig:ex1-f1}). For comparison 
purposes, we restarted the iteration after every 50 steps. Thus, in order 
to compute the results in Figure~\ref{fig:ex1-f2}, we had to evaluate 100, 
200 and 300 iteration steps respectively.

One should notice that the result obtained after the third restart (middle  
picture in Figure~\ref{fig:ex1-f2}) required 200 iteration steps to be 
computed and already gives us a much better approximation to the actual 
solution than the one obtained after 5000 steps with the Mann method.

\begin{figure}[t] \unitlength1cm
\begin{center}
\begin{picture}(14,4)
%\put(0,0){\dashbox{0.1}(14,4){}}
\centerline{
\epsfxsize4cm\epsfysize4cm \epsfbox{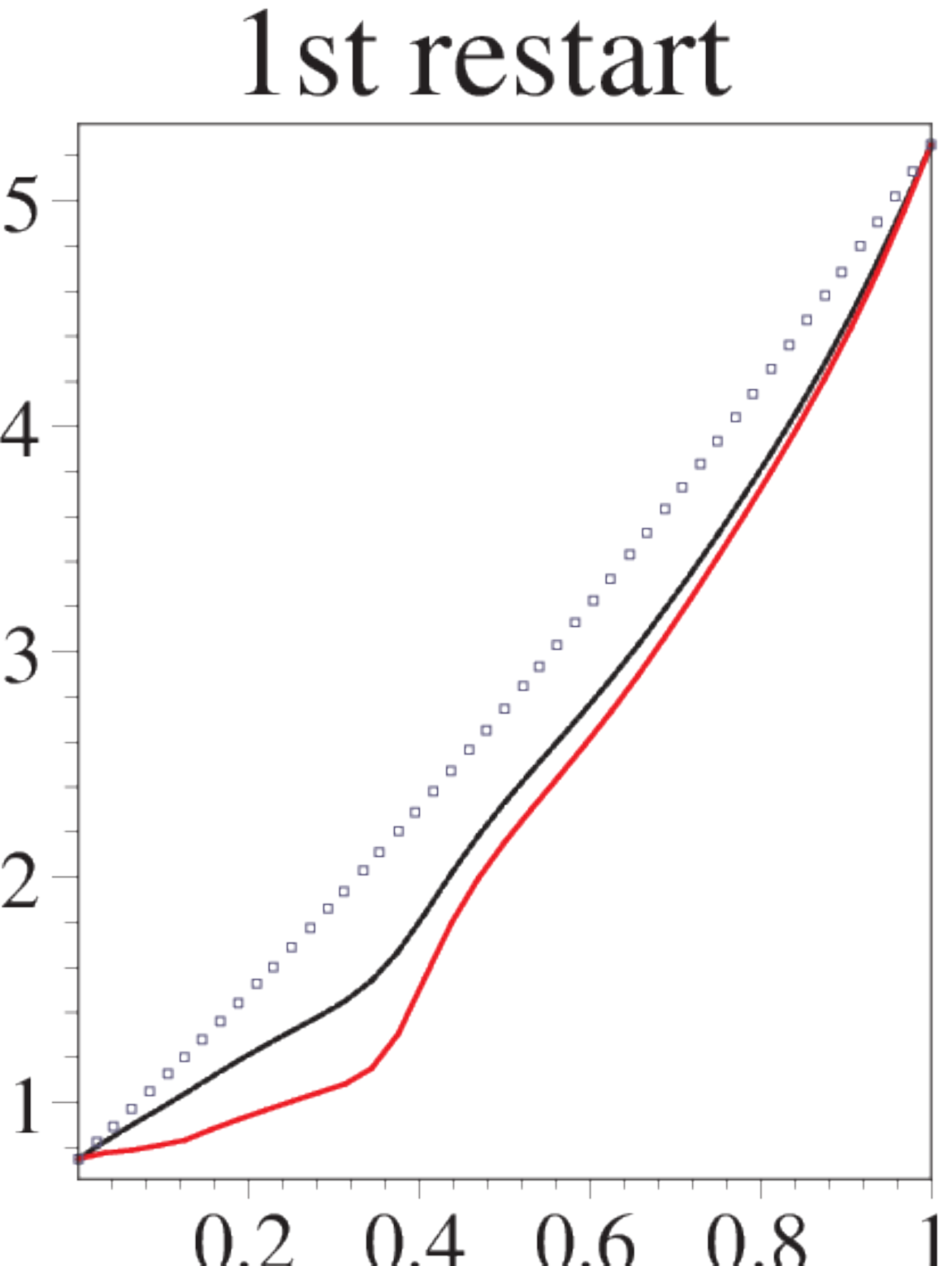} \hskip0.7cm
\epsfxsize4cm\epsfysize4cm \epsfbox{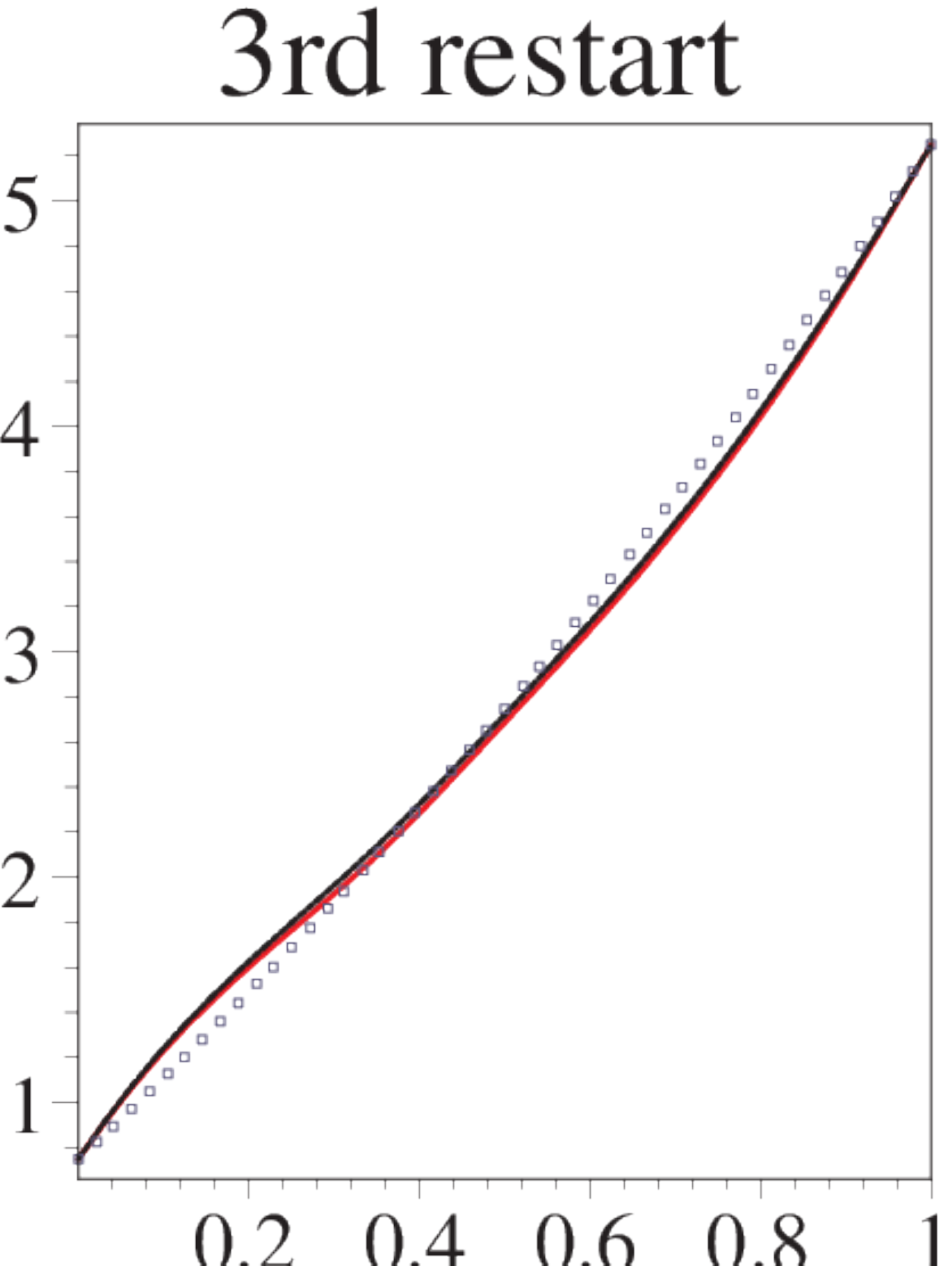} \hskip0.7cm
\epsfxsize4cm\epsfysize4cm \epsfbox{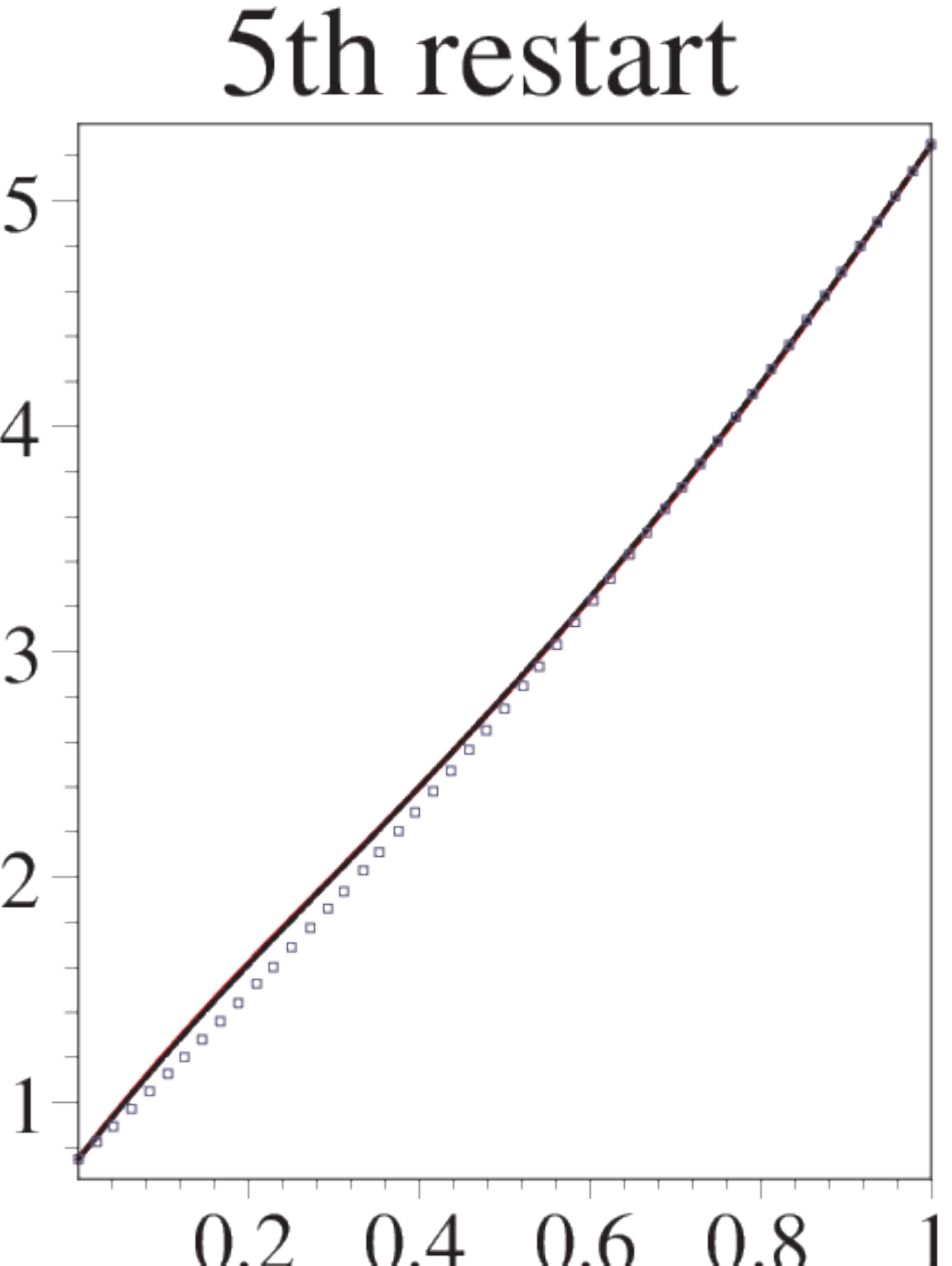} }
\end{picture}
\end{center} \vskip-0.6cm
\caption{Iteration with restart strategy (after every 50 steps) for a Cauchy 
problem with harmonic solution \label{fig:ex1-f2}}
\end{figure}

\subsection{A problem with non harmonic solution} \label{ssec:num-ex2}

Let $\Omega \subset \R^2$ and $\Gamma_i \subset \partial\Omega$, $i=1, \dots, 
4$ be defined as in the previous section. Let $q(t) = 2 + \sin t$. We consider 
the Cauchy problem
$$ \left\{ \begin{array}{rcl}
     -\nabla \cdot (q(u) \nabla u) & = & h \, , \mbox{ in } \Omega \\
     u           & = & f      \, , \mbox{ at } \Gamma_1 \\
     q(u)u_{\nu} & = & g      \, , \mbox{ at } \Gamma_1 \\
     u           & = & \bar{u}\, , \mbox{ at } \Gamma_3 \cup \Gamma_4
   \end{array} \right.  $$
with Cauchy data
$$ f(x) \ = \ \cos \pi x \, ,\ \ \ \T
   g(x) \ = \ - \big( 2 + \sin( \cos \pi x) \big) \, \cos(\pi x) $$
given at $\Gamma_1$ and right hand side $h : \Omega \to \R$ given by
$$ h(x,y) \ = \ (2 + \sin \bar{u}) (\pi^2-1) \bar{u} \, - \, \cos \bar{u}
              \, | \nabla \bar{u} |^2 , $$
where $\bar{u}: \bar{\Omega} \to \R$ is defined by
$$ \bar{u}(x,y) \ := \ \T \cos(\pi x) \, \exp(y) . $$
As in the previous example, the Cauchy problem was so constructed, such that 
$\bar{\vphi}$ is known. Indeed we have $\bar{\vphi} = \bar{u}_{|_{\Gamma_2}}$.

\begin{figure}[t] \unitlength1cm
\begin{center}
\begin{picture}(14,8.5)
%\put(0,0){\dashbox{0.1}(14,8.5){}}
\put(0,4.5){\centerline{
\epsfxsize4cm\epsfysize4cm \epsfbox{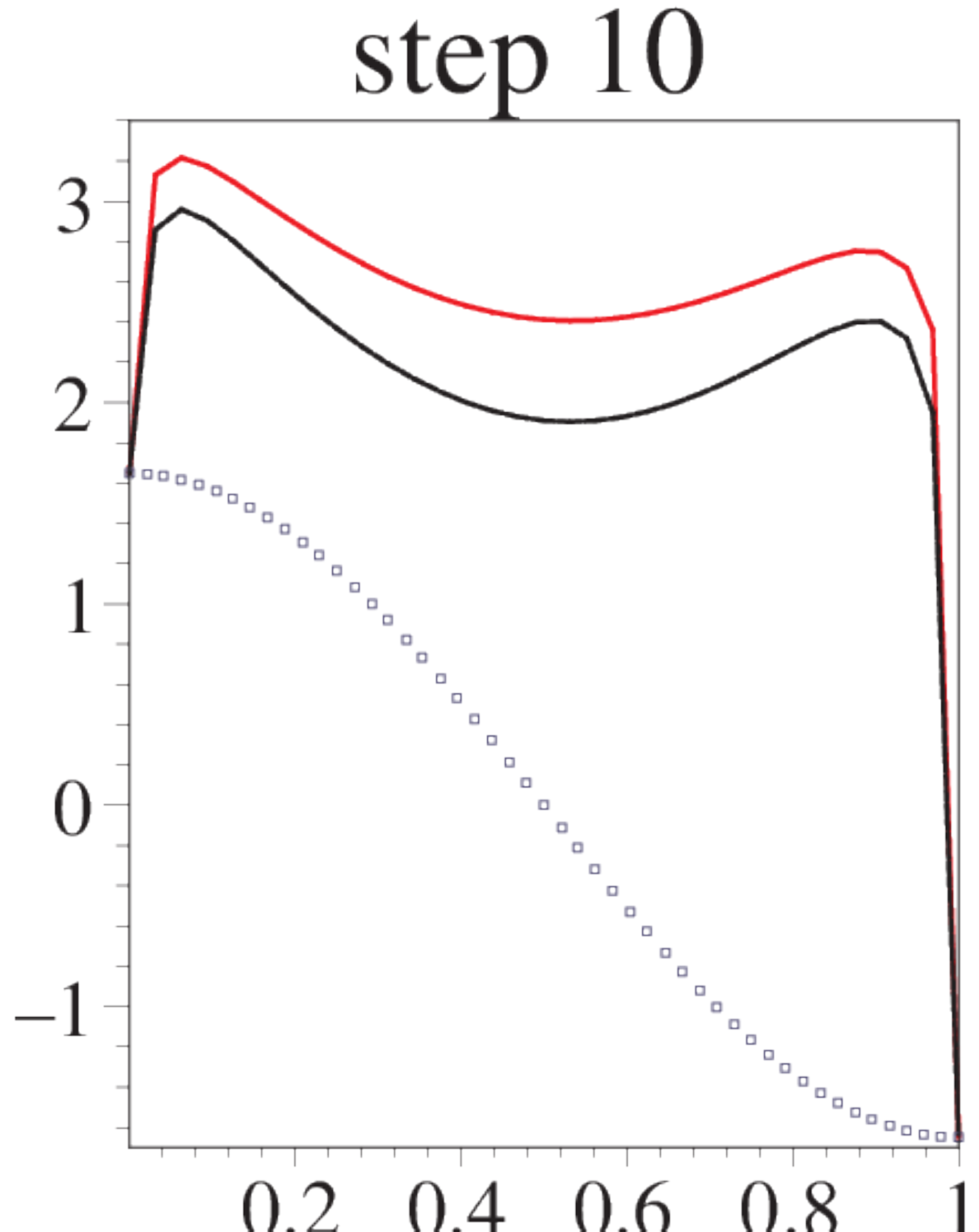} \hskip0.7cm
\epsfxsize4cm\epsfysize4cm \epsfbox{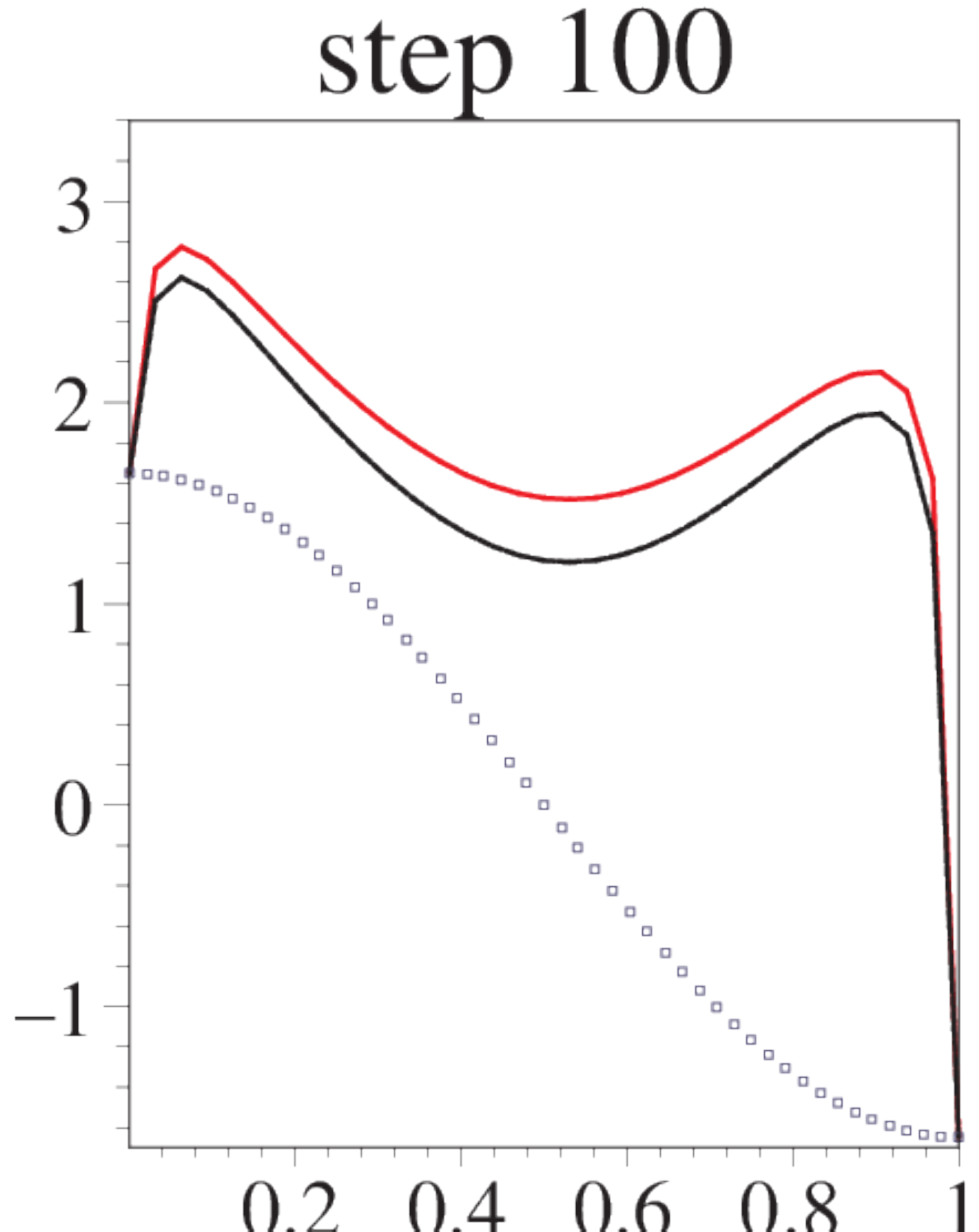} \hskip0.7cm
\epsfxsize4cm\epsfysize4cm \epsfbox{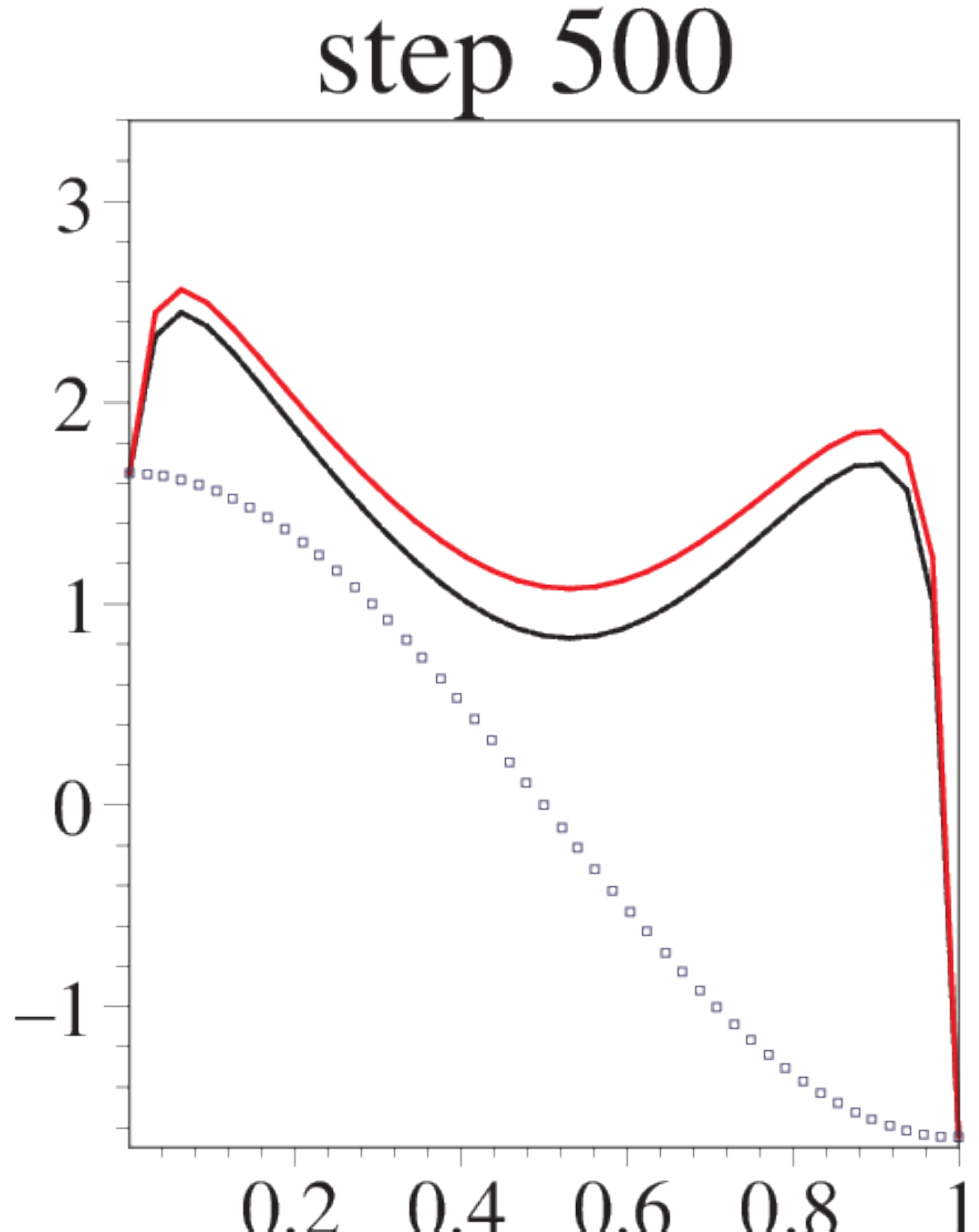} } }
\put(0,0){\centerline{
\epsfxsize4cm\epsfysize4cm \epsfbox{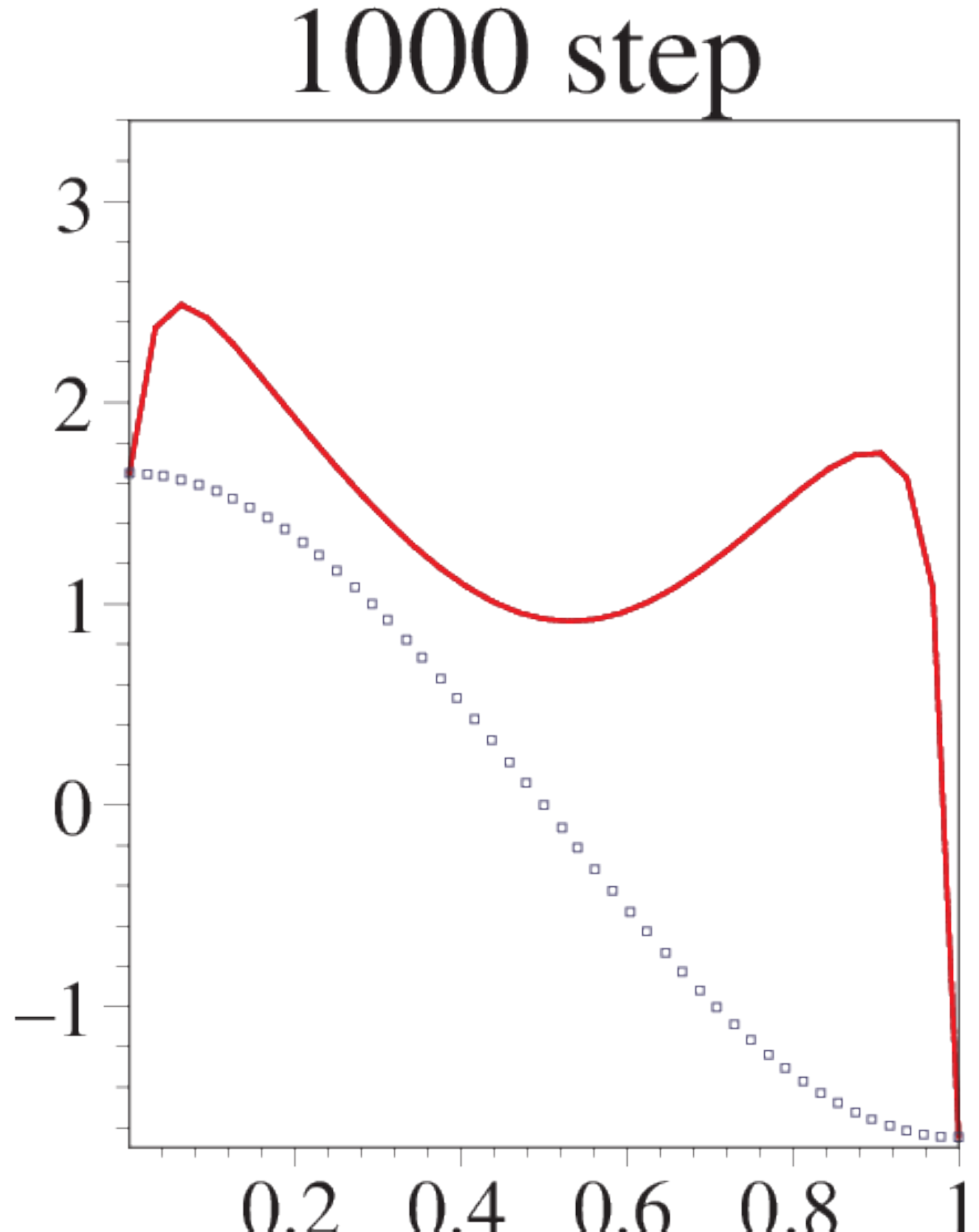} \hskip1cm
\epsfxsize4cm\epsfysize4cm \epsfbox{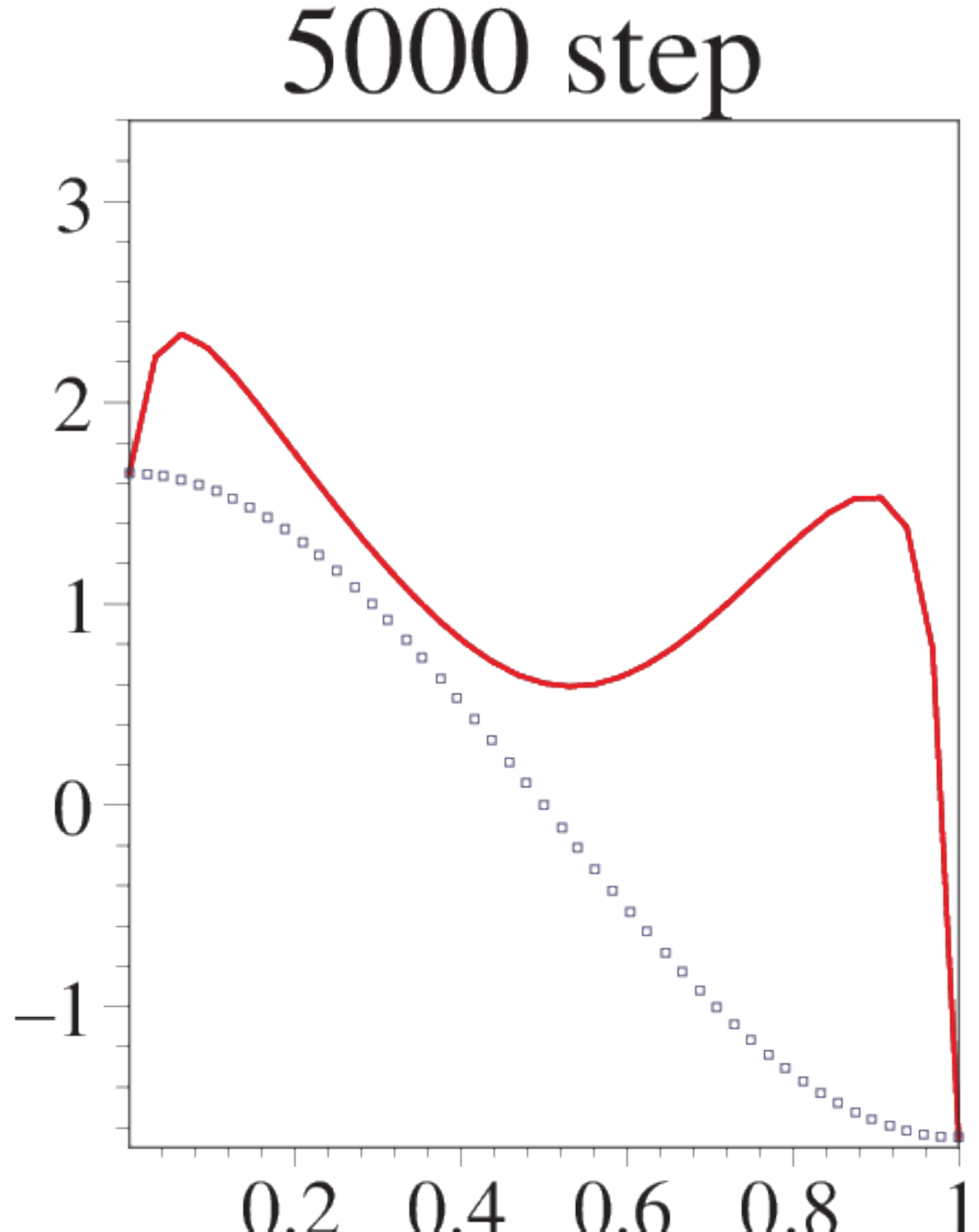} } }
\end{picture}
\end{center} \vskip-0.6cm
\caption{Iteration for a Cauchy problem with non harmonic solution 
\label{fig:ex2-f1}}
\end{figure}

For the numerical computations we used the same segmenting matrix $A$ and the 
same stopping rule as in the previous example. As initial guess, $\vphi_1 
\equiv 4$ was chosen (at the end points $x=0$ and $x=1$ we must choose, for 
compatibility reasons, $\vphi_1(0) = \exp(0.5)$, $\vphi_1(1) = -\exp(0.5)$). 
Each mixed boundary value problem was solved using a (multi-grid) finite 
element method, with linear elements and a uniform mesh with 16\,577 nodes 
(129 nodes on $\Gamma_2$).

In Figure~\ref{fig:ex2-f1} we present the results corresponding to the Mann 
iteration for the operator $\bar{T}$ (the meaning of the curves is the same 
as in Figures~\ref{fig:ex1-f1} and \ref{fig:ex1-f2}).

\begin{figure}[t] \unitlength1cm
\begin{center}
\begin{picture}(14,4)
%\put(0,0){\dashbox{0.1}(14,4){}}
\centerline{
\epsfxsize3.3cm\epsfysize4cm \epsfbox{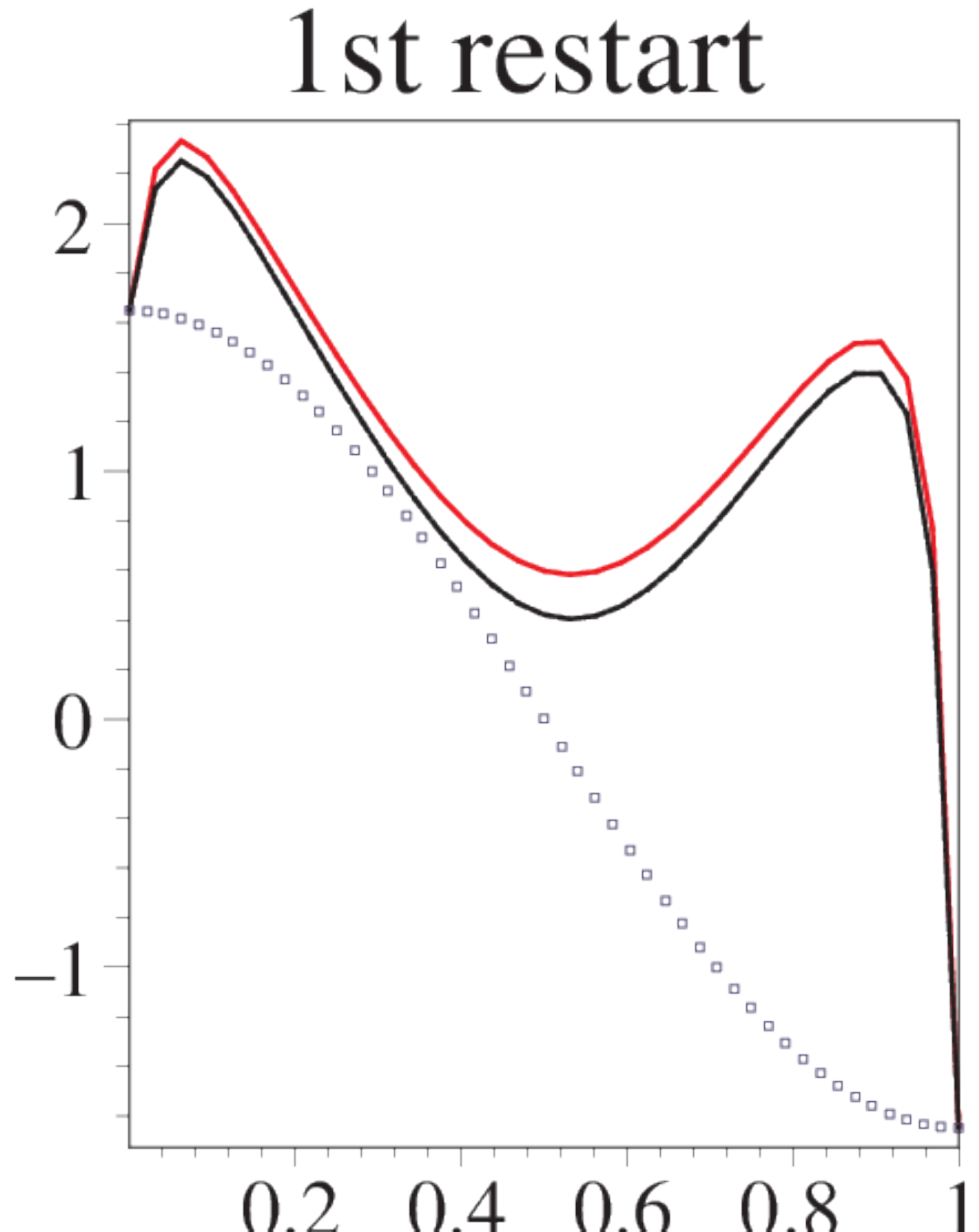} \ 
\epsfxsize3.3cm\epsfysize4cm \epsfbox{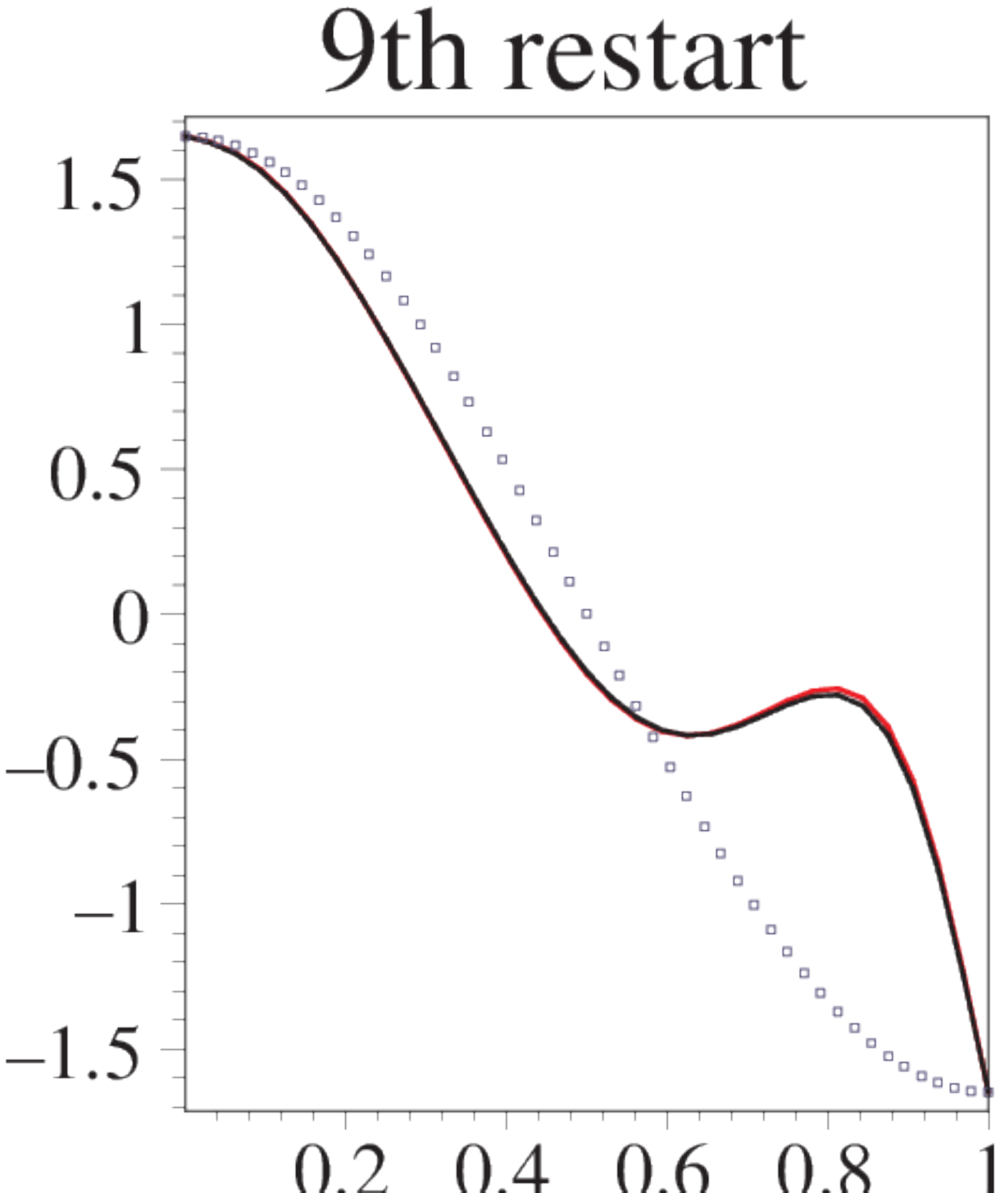} \ 
\epsfxsize3.3cm\epsfysize4cm \epsfbox{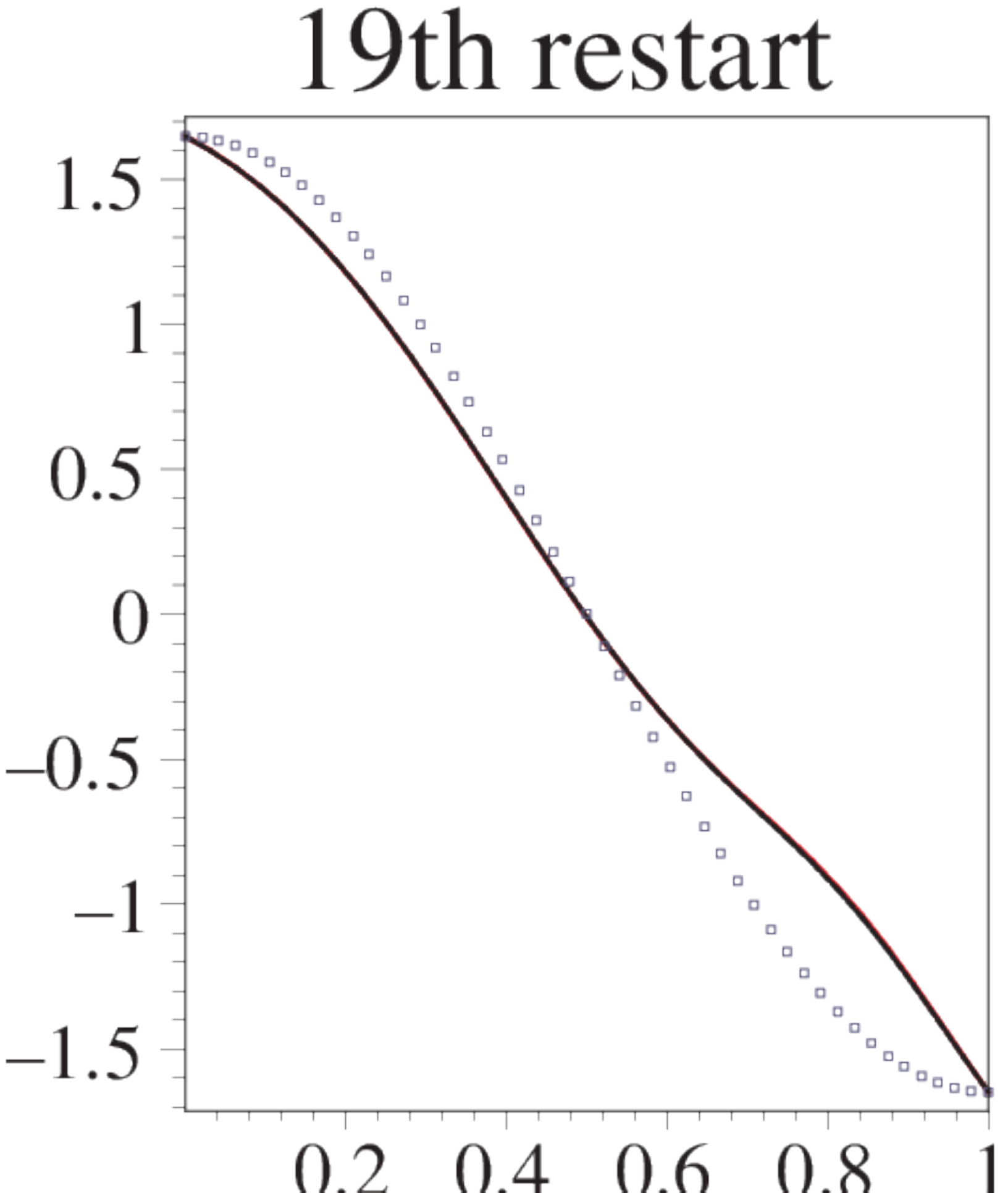} \ 
\epsfxsize3.3cm\epsfysize4cm \epsfbox{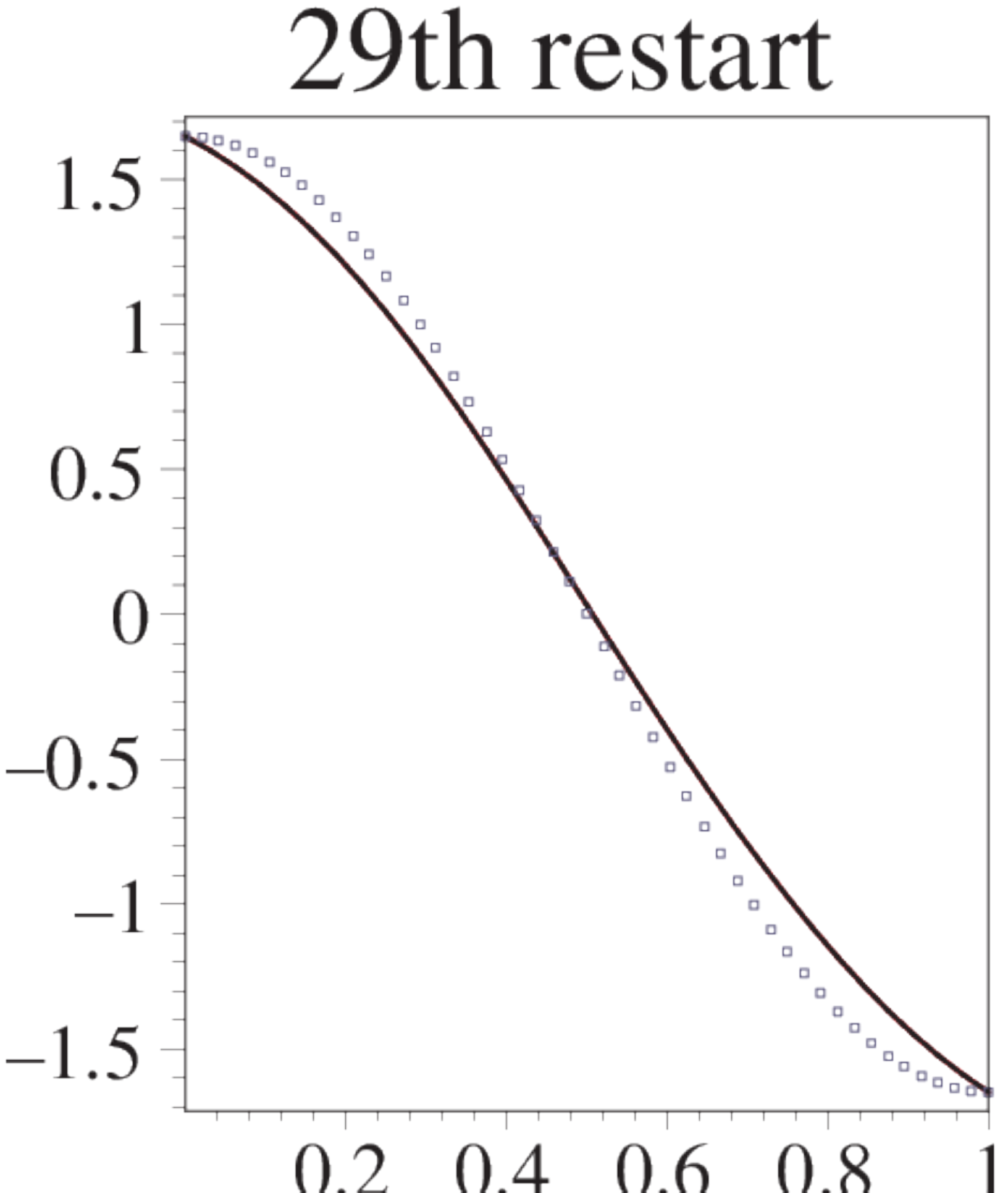} }
\end{picture}
\end{center} \vskip-0.6cm
\caption{Iteration with restart strategy (after every 50 steps) for a Cauchy 
problem with non harmonic solution \label{fig:ex2-f2}}
\end{figure}

Analogous as in the previous example, it is possible to accelerate the 
convergence of the iterative method by using a restart strategy. In 
Figure~\ref{fig:ex2-f2} we present the results obtained by restarting the 
iteration after every 50 steps (in order to obtain the results in Figure~%
\ref{fig:ex2-f2} we had to evaluate 100, 500, 1000 and 1500 iteration steps 
respectively).

A similar restart strategy was suggested in \cite{EnLe} (for the linear case) 
and produced nice results. It is worth mentioning that we have no analytical 
justification neither for the choice of the restart criterion nor for the 
improvement in the convergence rate.

\subsection{A problem with noisy data} \label{ssec:num-ex3}

For this exemple we consider once more the Cauchy problem described in 
Section~\ref{ssec:num-ex1}. To obtain the noisy Cauchy data we simply 
perturbed the exact Cauchy data
$$ (f,g) \ = \ \big( x^2 + 5x \, ,\ (1 + (x^2 + 5x)^2) \, (3x - 2) \big) $$
using an error of level 1\%.

%###################################
\unitlength1mm
\begin{wrapfigure}{r}{8.3cm}
\begin{center} {\vspace{-4.6ex}
\begin{center}
\mbox{\epsfxsize4cm\epsfysize3cm \epsfbox{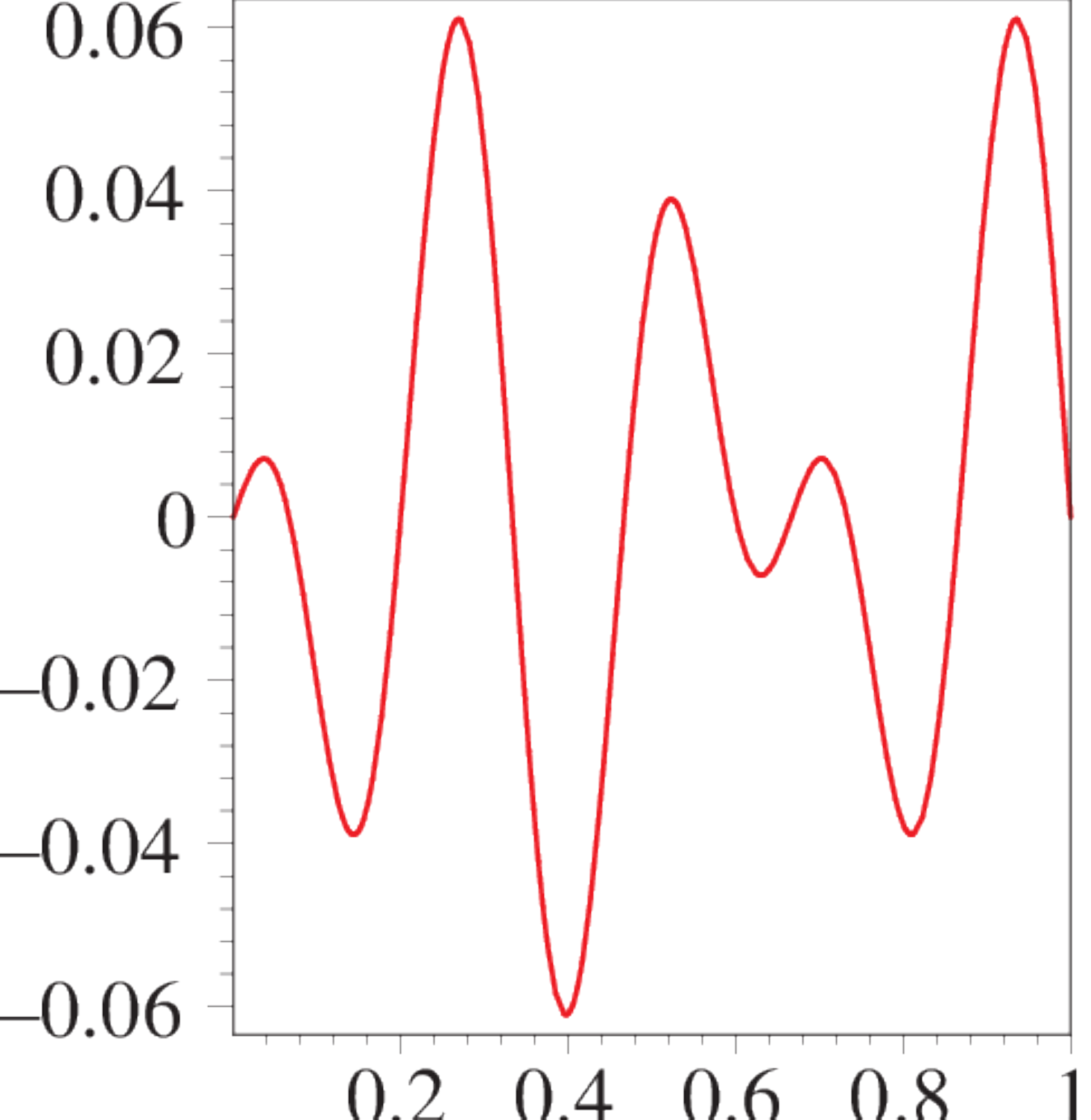}
      \epsfxsize4cm\epsfysize3cm \epsfbox{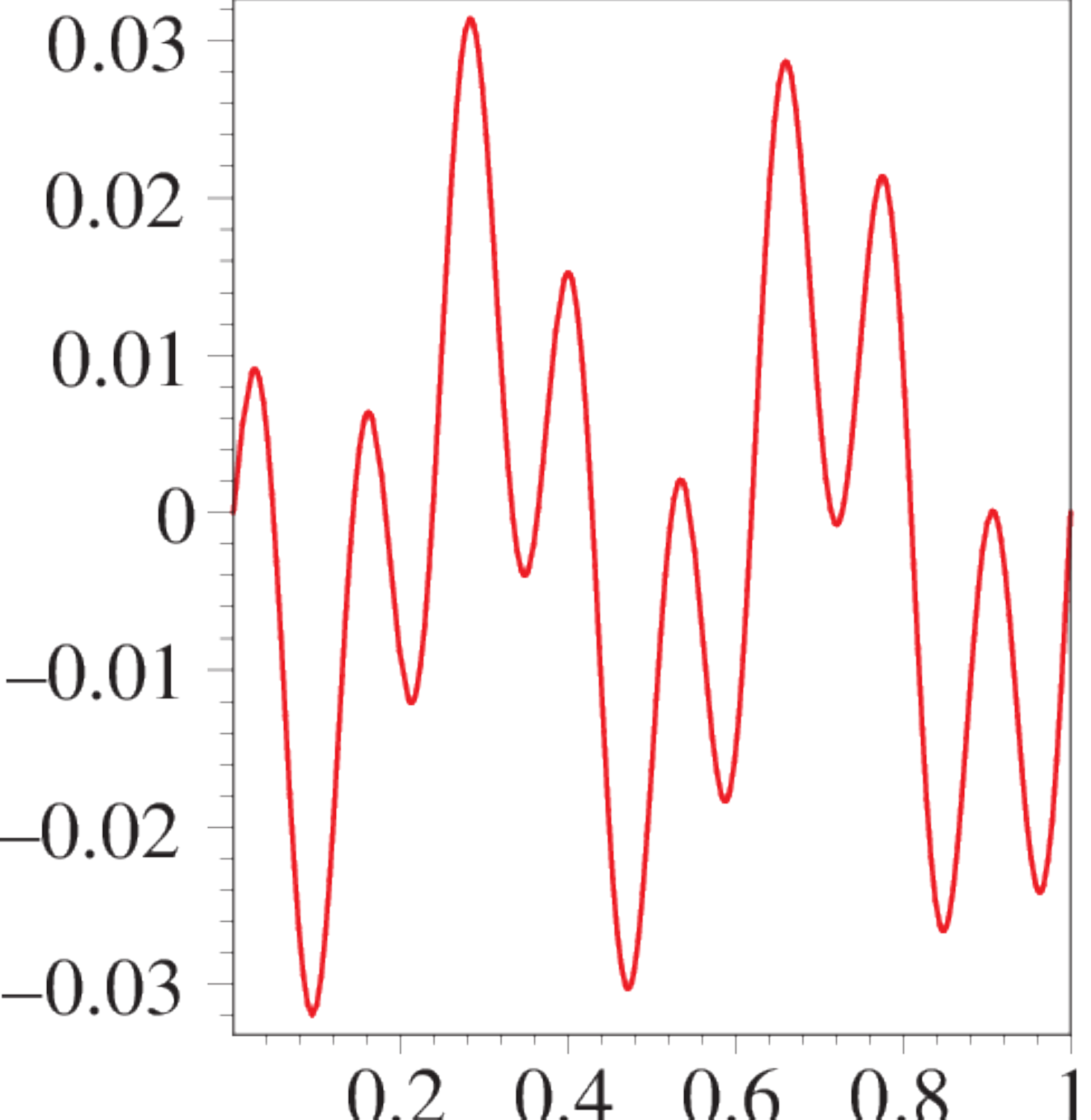} }
\end{center}
\centerline{\hspace{2cm} (a) \hspace{3.4cm} (b) \hfill \mbox{}}
\vspace{-0.5ex}
\caption{Generation of noisy data; (a) Perturbation added to the Dirichlet 
data; (b) Perturbation added to the Neumann data \label{fig:noisy-data} }
\vspace{-6ex}}
\end{center}
\end{wrapfigure}
%###################################
In Figure~\ref{fig:noisy-data} we present the perturbations added to the 
exact Dirichlet and Neumann data.

For the iteration, we used again the Ces\`aro matrix $A$ in Section~%
\ref{ssec:num-ex1}. The initial guess, stopping rule and mesh refinement 
used for the computation are the same as those used in that section.

The numerical results corresponding to the Mann iteration are presented in 
Figure~\ref{fig:ex3-f1}: the dotted (blue) line corresponds to the exact 
solution; the dashed (black) line corresponds the iteration for exact Cauchy 
data (see Figure~\ref{fig:ex1-f1}); the solid (red) line corresponds to the 
iteration for the noisy data.

It is worth mentioning that our numerical solver was not able to handle with 
the non linear mixed boundary value problems when we tried to use a larger 
error level. To contour this problem we could alternatively refine our mesh 
or increase the maximum number of Newton iterations in the solver. However 
this would interfere with the comparison of results presented in Figure~%
\ref{fig:ex3-f1}.

\begin{figure}[t] \unitlength1cm
\begin{center}
\begin{picture}(14,4)
%\put(0,0){\dashbox{0.1}(14,4){}}
\centerline{
\epsfxsize3.3cm\epsfysize4cm \epsfbox{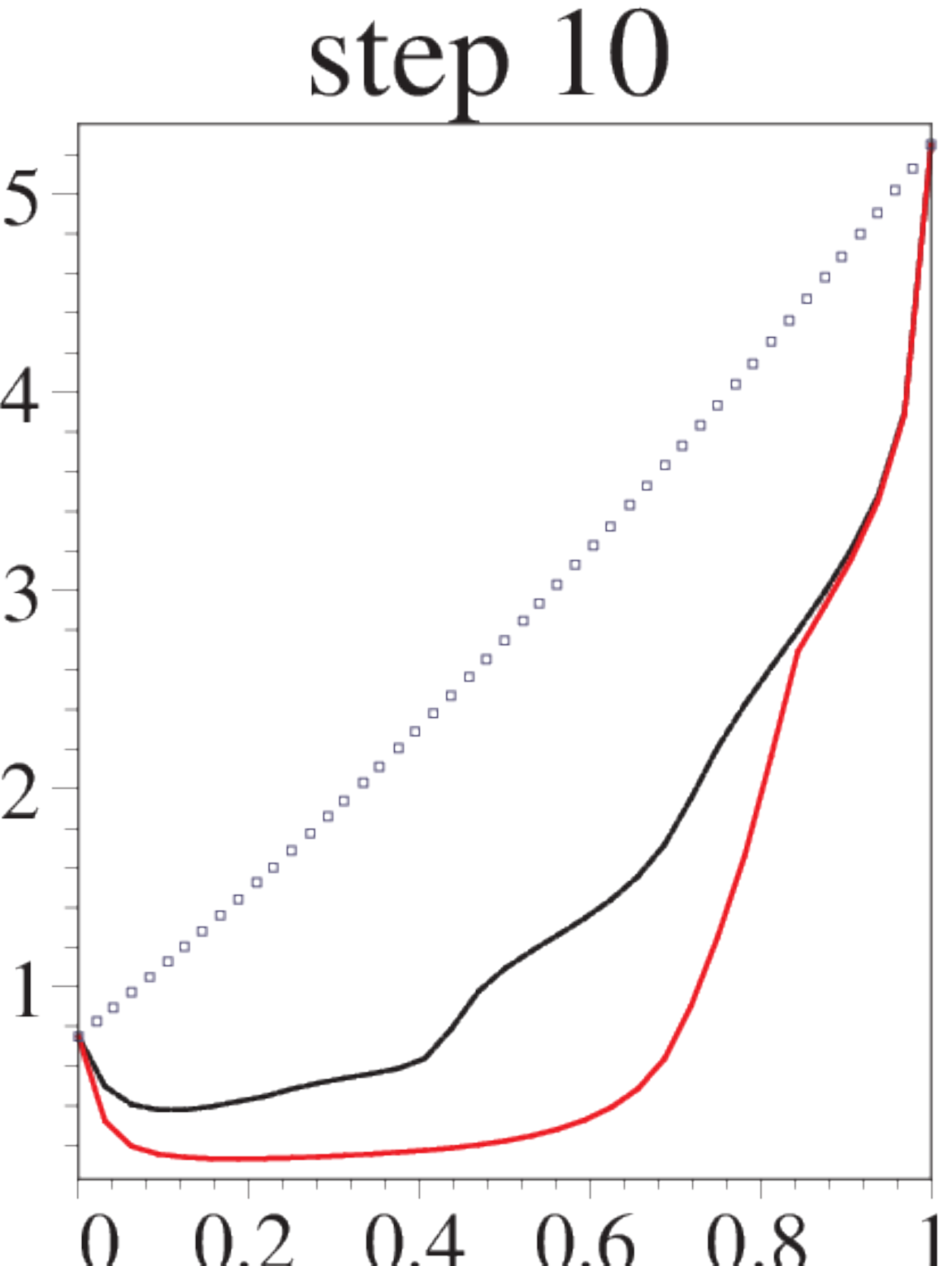} \ 
\epsfxsize3.3cm\epsfysize4cm \epsfbox{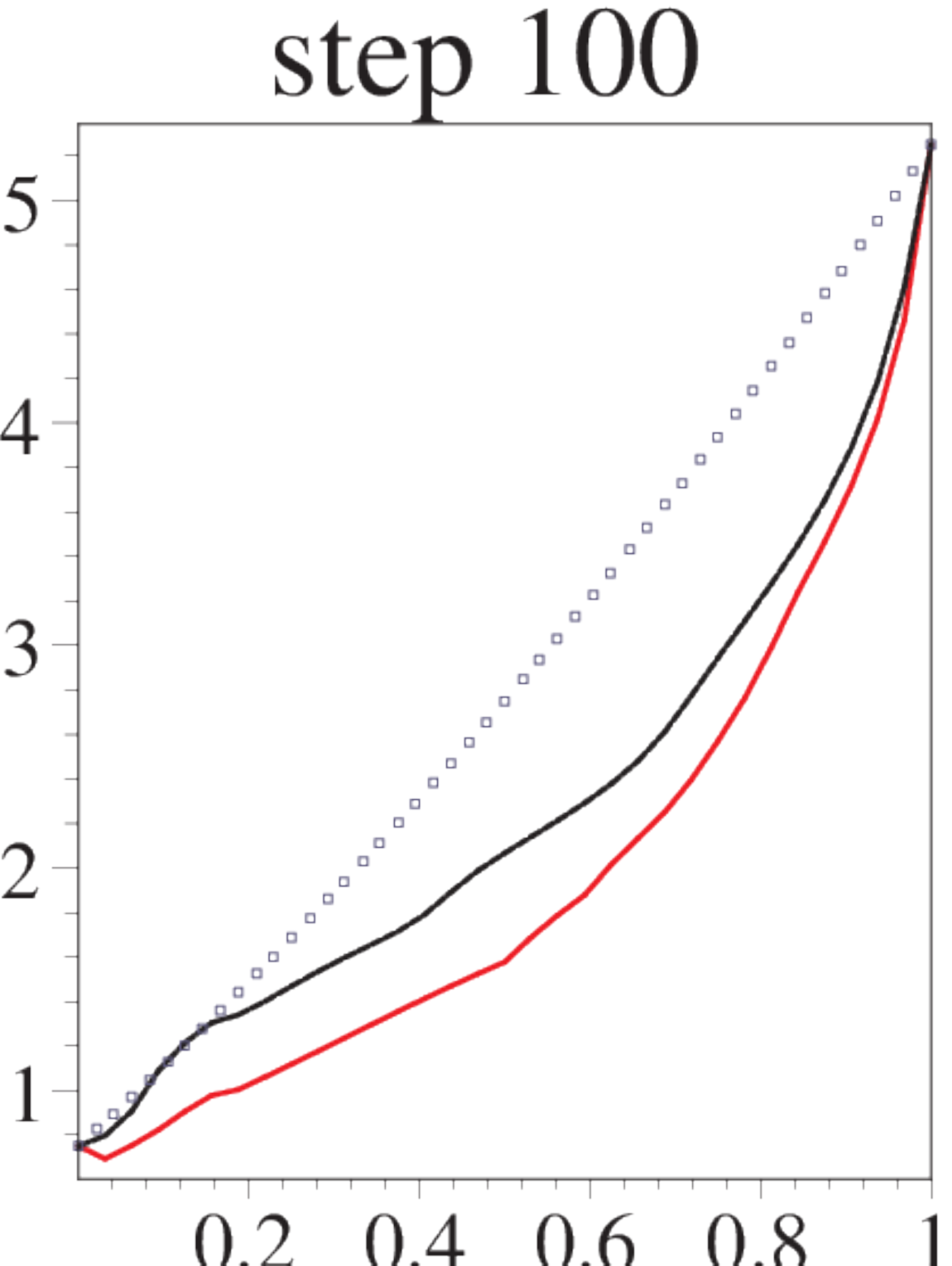} \ 
\epsfxsize3.3cm\epsfysize4cm \epsfbox{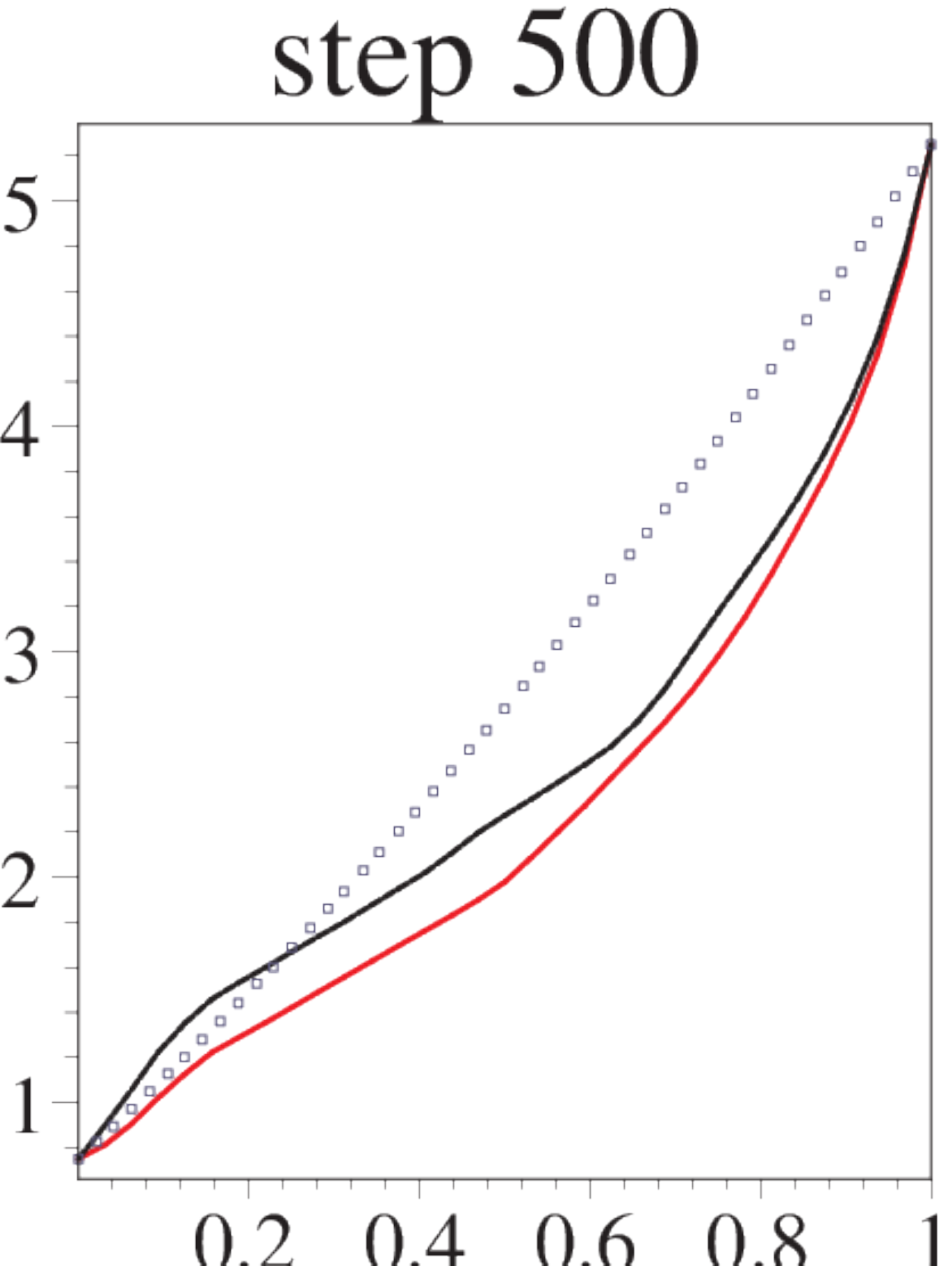} \ 
\epsfxsize3.3cm\epsfysize4cm \epsfbox{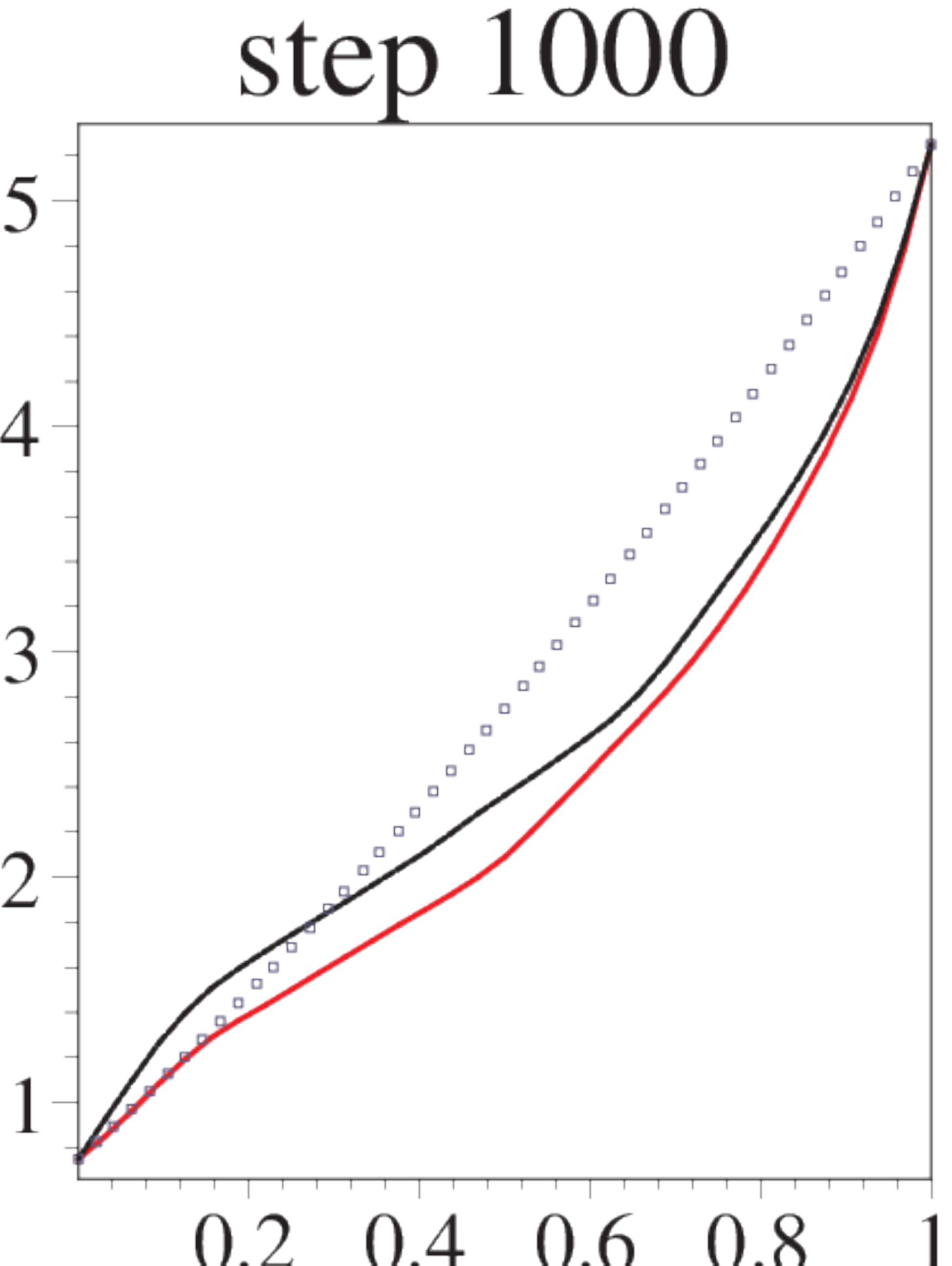} }
\end{picture}
\end{center} \vskip-0.6cm
\caption{Iteration for a Cauchy problem with noisy data \label{fig:ex3-f1}}
\end{figure}
%
%
%
%-----------------------------------------------------------------------------
\section{Final remarks} \label{sec:concl}

Let us assume $\partial\Omega = \Gamma_1 \cup \Gamma_2 \cup \Gamma_3$. Now, 
if we want to solve a Cauchy problem with data given at $\Gamma_1$ plus some 
further boundary condition (Neumann, Dirichlet, $\dots$) at $\Gamma_3$ (see 
the problems in Section~\ref{sec:numeric}). It is possible to adapt our 
iterative method for this type of problems. We just have to add the extra 
boundary condition at $\Gamma_3$ to both mixed boundary value problems at 
each iteration step. The over-determination of boundary data does not affect 
the analysis presented in this paper (see \cite{Le} for the linear case). 

The iterative methods proposed in Section~\ref{sec:it_cp} generate 
sequences of Neumann traces, which approximate the unknown Neumann boundary 
condition $q(u) u_{\nu|_{\Gamma_2}}$. Alternatively, we could define an 
iterative method, which produces a sequence of Dirichlet traces (see the 
problems in Section~\ref{sec:numeric}). This was already suggested in the 
linear case in \cite{Le}). The convergence proof for this iterative method 
is quite similar to the one discussed in this paper.

\subsection*{Acknowledgment}

The authors would like to thank Professor H.W. Engl (Institut f\"ur 
Industriemathe\-matik, Kepler Universit\"at Linz) for several useful 
comments and discussions.
%
%
%
%-----------------------------------------------------------------------------
%\appendix
%
%\section*{Appendix}
%\setcounter{section}{1}
%
%\begin{applemma} \label{lem:app1}
%
%\begin{proof} \rm
%
%\end{proof}
%\end{applemma}
%
%
%
%-----------------------------------------------------------------------------

\newpage

\end{document}